\def\Label#1{}
\let\proof\relax
\def\norm#1{{\left\vert\kern-0.25ex\left\vert #1 
    \right\vert\kern-0.25ex\right\vert}}
\def\nnorm#1{{\left\vert\kern-0.25ex\left\vert\kern-0.25ex\left\vert #1 
    \right\vert\kern-0.25ex\right\vert\kern-0.25ex\right\vert}}
\def\FF{\mathcal F}
\def\OO{\mathcal O}
\def\LL{\mathcal L}
\def\XX#1{{\mathcal X}(#1)}
\newenvironment{myenum}
{\begin{enumerate}
\setlength{\itemsep}{1pt}
\setlength{\parskip}{0pt}
\setlength{\parsep}{0pt}}
{\end{enumerate}}
\newcommand{\ie}{{\it i.e.}, }
\newcommand{\eg}{{\it e.g.}, }
\let\epsilon=\varepsilon
\def\d{{\rm d}}
\def\eref#1{Eq.(\ref{#1})}
\def\fref#1{Fig.(\ref{#1})}
\def\tref#1{Theorem~\ref{#1}}
\def\lref#1{Lemma~\ref{#1}}
\def\sref#1{Sec.\ref{#1}}
\def\pref#1{Prop.\ref{#1}}
\def\i{{\bf i}}
\def\braket#1#2{|\v#1_{#2} \rangle \langle \n#1_{#2} |}
\let\theta=\vartheta
\def\K{2}
\def\TT{T}
\let\phi=\varphi
\let\kappa=\varkappa
\def\expit{e^{\I (t\phi+\theta)}}
\def\expmit{e^{-\I (t\phi+\theta)}}
\def\expitt{e^{\I (t\phi(t)+\theta(t))}}
\def\star{{}}
\def\psphit{p^*\bigl(\phi(t)\bigr)}
\def\pphizero{p^*(\phi_0)}
\def\pphi{p^*(\phi)}
\def\norm#1{\|#1\|}
\def\psphit{p_{\phi(t)}}
\def\pphizero{{p_{\phi_0}}}
\def\pphi{{p_\phi}}
\def\psphit{p\bigl(\phi(t)\bigr)}
\def\pphizero{p(\phi_0)}
\def\pphi{p(\phi)}
\def\pphizero{p(\phi_0)}
\def\cir{\kern-0.3em\circ\kern-0.2em}
\def\dpphi{\partial_\phi\pphi}
\def\dpphidotphi{\partial_\phi \bigl(\pphi\bigr)\dot\phi}
\def\v#1{v^{(#1)}}
\def\n#1{n^{(#1)}}
\def\pmat#1{\begin{pmatrix}#1\end{pmatrix}}
\def\complex{\mathbb C}
\def\real{\mathbb R}
\def\P{\mathbb P}
\def\Q{\mathbb Q}
\def\Px{{\mathbb P}^\xi}
\def\Py{{\mathbb P}^\eta}
\def\one{\mathbb{1}}
\def\I{{\rm I}}
\def\T{\top}
\def\tdotphitheta{(t\dot\phi+\dot\theta) }
\def\pphizerozero{\pmat{\pphizero\\0}}
\def\mid{\middle |}
\def\freq{\Omega}
\def\I{{\bf \i}}
\def\penull{{\phi_0,\epsilon}}
\renewcommand*\env@matrix[1][*\c@MaxMatrixCols c]{%
  \hskip -\arraycolsep
  \let\@ifnextchar\new@ifnextchar
  \array{#1}}
\begin{document}
\title{Decay of Hamiltonian Breathers under Dissipation}
\author{Jean-Pierre Eckmann \and C. Eugene Wayne}
\institute{J.-P. Eckmann\at D\'epartement de Physique Th\'eorique and
Section de Math\'ematiques\\ Universit\'e de
Gen\`eve\\1211 Geneva 4 Switzerland \and C.E. Wayne\at Department of
Mathematics and Statistics\\ Boston University\\Boston MA 02215 USA}

\dedication{Dedicated to the memory of our friend and colleague, Walter Craig.}

\date{Received: date / Accepted: date}
\maketitle

\begin{abstract}
  We study metastable behavior in a discrete nonlinear Schr\"odinger equation
from the viewpoint of Hamiltonian systems theory.   When there are
  $n<\infty$ sites in this equation, we consider initial conditions in which almost
  all the energy is concentrated in one end of the system.  We are interested in 
  understanding how energy flows through the system, so we  add a
  dissipation of size $\gamma $ at the opposite end of the chain,  and we show
  that the energy decreases  extremely slowly.  Furthermore, the motion is localized in 
  the phase space near a family of breather solutions for the undamped system.  We give
  rigorous, asymptotic estimates for the rate of evolution along the family of breathers and the
  width of the neighborhood within which the trajectory is confined.
  \end{abstract}
\tableofcontents
\section{Introduction}

In the present work we look at the problem of a finite, discrete
nonlinear Schr\"odinger equation, with dissipation, which we
considered first in \cite{EckmannWayne2018}. We need to repeat several
equations from that paper, but the aim is now to give a complete proof
of the observations and  assertions in that paper.
One starts with
\begin{equ}\label{eq:dNLS}
-\I  \frac{\partial u_j}{\partial \tau} = - (\Delta u)_j + |u_j|^2 u_j\ , \
j=1,2, \dots , n\ ,
\end{equ}
where we will add dissipation later.
Here $(\Delta u)_j= u_{j-1}-2 u_j +u_{j+1}$, with obvious
modifications for $j=1$ or $n$, \ie $(\Delta u)_1= -u_1+u_2$ and
$(\Delta u)_n=-u_n+u_{n-1}$.
{
For the
convenience of the reader, the following introduction repeats the
setup from \cite{EckmannWayne2018}. 
}

We will choose initial conditions for this system in which essentially all of
the
energy is in mode $u_1$, and will add a weak dissipative term to the last
mode as in \cite{EckmannWayne2018,cuneo:2017} by adding to \eref{eq:dNLS} a term of the form
\begin{equ}
\I \gamma \delta_{n,j} u_j~,
\end{equ}
\ie we add dissipation to position $n$, at the {opposite }end from the
energetic mode.

Eventually, this will lead to the energy of the system tending to zero, but we are interested in
what happens on intermediate time scales, and in particular, how the energy is transported from one end 
of the lattice to the other.

{
If our initial conditions are chosen so that $u_1(0) = \sqrt{ \freq } $, and all other
$u_j(0) = 0$, then we expect
that at least initially, the coupling terms between the various modes will play
only a small
role in the evolution and the system will be largely dominated by the equation
for $u_1$:
\begin{equ}
-\I  \frac{\d u_1}{\d\tau} = |u_1|^2 u_1\ ,
\end{equ}
with solution $u_1(\tau ) = \sqrt{\freq} e^{\I  \freq  \tau}$---\ie we have a very
fast rotation with large amplitude.  With this
in mind, we introduce a rescaled dependent variable and rewrite the equation in
a rotating coordinate
frame by setting:
}
\begin{equ}\label{eq:wide}
u_j(\tau) = \sqrt{\freq}  e^{\I  \freq  \tau} \widetilde{w}_j(\tau)\ .
\end{equ}
Then $\widetilde{w}_j$ satisfies
\begin{equ}
\freq  \widetilde{w}_j  -\I  \frac{\partial \widetilde{w}_j  }{\partial \tau}
 = - (\Delta \widetilde{w})_j + \freq  |\widetilde{w}_j |^2 \widetilde{w}_j\ .
\end{equ}
We now add dissipation by adding a term which acts on the last
variable, with $\gamma\ge0$,
\begin{equ}
\freq  \widetilde{w}_j  -\I  \frac{\partial \widetilde{w}_j  }{\partial \tau}
 = - (\Delta \widetilde{w})_j + \freq  |\widetilde{w}_j |^2 \widetilde{w}_j +\I \gamma
\delta_{n,j} \widetilde{w}_j\ .
\end{equ}
Rearranging, and dividing by $\freq $ gives
\begin{equ}
- \I \frac{1}{\freq } \frac{\partial {\widetilde{w}}_j }{\partial \tau}
= - \frac{1}{\freq } (\Delta \widetilde{w})_j - \widetilde{w}_j + |\widetilde{w}_j |^2
\widetilde{w}_j
+ \I \frac{\gamma}{\freq }
\delta_{n,j} \widetilde{w}_j
\  .
\end{equ}
Finally, we define $\epsilon = \freq ^{-1}$, and rescale time so that $\tau =
\epsilon t$.  Setting
$w(t) = \widetilde{w}(\tau)$,  we arrive finally
at
\begin{equ}\label{eq:main}
-\I  \frac{\partial w_j}{\partial t} = -\epsilon (\Delta w)_j - w_j + |w_j|^2
w_j+ \I \gamma \epsilon\delta_{n,j} w_j~.
\end{equ}

Now that we have defined the main equation \eref{eq:main} we describe
the picture that will be proved later:

When $\gamma =0$ and $|\epsilon| \ne0$ is small, {this system
 possesses a family of breathers,  namely solutions which in this rotating coordinate
 systems are stationary states in which most of the energy
is localized in site 1. \footnote{The existence and properties of
breather solutions in infinite lattices of oscillators are discussed
in \cite{Mackay:1994} or \cite{Flach:1998}.   The proofs in those cases
are easily modified (and actually somewhat simpler) in the case of finitely many
degrees of freedom.}  Such solutions take the form
\begin{equ}
  u_{\epsilon ,j}^{(0)}\sim (-1)^{j+1}\epsilon ^{j-1}~, \quad j=1,\dots,n~.
\end{equ}

In fact, there are many such solutions, with different frequencies, which we will write
as

\begin{equation}
u_{\epsilon,j}^{(\phi_0)} = e^{\I \phi_0 t} p(\phi_0)_j\ ,
\end{equation}
for $\phi_0$ near zero, and $p(\phi_0)_j \sim   u_{\epsilon ,j}^{(0)}$.
We will demonstrate the existence of these families of solutions in \tref{th:fixed2}, using the implicit function theorem,
and also give more accurate asymptotic formulas for them. 
As \eref{eq:main} is invariant under complex rotations, we actually
have a circle of fixed points, with a phase we call $\theta $. As there is one such circle for every
small $\phi_0 $ we represent, in \fref{fig:1}, these solutions as a
(green) cylinder, with the direction along the cylinder corresponding to changing $\phi_0$ and 
motions ``around'' the cylinder corresponding to changing $\theta$. }

When the dissipation is nonzero ({\it i.e.} when $\gamma >0$)
these periodic solutions are destroyed, but they give rise to 
a family of time-dependent solutions which ``wind'' along the
cylinder,  the red curves in \fref{fig:1}. { We will prove that one can accurately approximate
solutions of the dissipative equations by ``modulating'' the frequency and phase of the breather, namely we 
prove that the solutions of the dissipative equation can be written as:}
\begin{equ}
  u_j(x,t)= \expitt   u_{\epsilon ,j}^{(0)}+z_j(t) ~, \quad j=1,\dots,n~,
\end{equ}
where
\begin{equa}
  \dot\phi(t) &\sim -2\gamma \epsilon ^{2n-1}~,\\
  t\dot\phi(t)+\dot \theta(t) &\sim 0~,\\
    \norm{z(t)}&\text{ remains bounded by }\OO(\gamma\epsilon ^n)~.
\end{equa}
We prove that the initial values $\phi_0$ and $\theta_0$  can be chosen so that $z(t)$ is normal
to the cylinder of breathers at the point $(\phi_0,\theta_0)$, and that its long term boundedness is due to the (somewhat
surprising) fact that the linearized dynamics about the family of breathers is uniformly (albeit weakly) damping in these
normal directions.  This is the main new technical result of the paper and the proof of this fact takes up
Sections \ref{s:evolution}-\ref{sec:semigroup}.
That breathers can play an important role in the non-equilibrium evolution of systems of coupled oscillators has
also been discussed (non-rigorously) in the physics literature.  For two recent examples see
\cite{ILLOP:2017} and \cite{DCF:2017}.

To formulate our results more precisely,  we need some notation:
Let $\delta (t)\equiv\phi(t)-\phi_0$. Let $s(t)=\int_0^t \d \tau
(\tau \dot\phi(\tau ) +\dot \theta (\tau ))$. Let the initial
condition be $\phi_0$, $\theta _0$ and $z_0$, \emph{with $z_0$ perpendicular
to the tangent space to the
cylinder at $\phi_0$, $\theta_0$}.  {Note that $\delta(0) = s(0) = 0$.
\begin{theorem}\label{thm:main}
For sufficiently small $\epsilon >0$ and $\gamma >0$ the following
holds:
Assume 
$$
\|z(0)\| \le \gamma \epsilon ^n ~,
  $$
  Then, there is a constant (depending only on $n$) such that at time $T={\rm const}\,\epsilon ^{-1}$. 
  $$
\|z(T)\| \le \gamma \epsilon ^n ~,
  $$
  While both $\delta(T)$ and $s(T)$ have modulus less than $1$.  
  For all
  intermediate $t$, one has $\|z(t)\| \le 2 \gamma \epsilon ^n $, so trajectory never moves too far from the cylinder of breathers.
Furthermore, one can find $\phi_1$, $\theta _1$, $z_1$,  with $z_1$ in the subspace perpendicular to the tangent space to the
  cylinder at $\phi_1$, $\theta_1 $, with
  \begin{equ}\label{eq:re}
  e^{\I(T\phi_1+\theta _1)}p(\phi_1) + z_1 =
  e^{\I(T\phi(T)+\theta (T))}p(\phi(T)) +z(T)~,
  \end{equ}
  and
  \begin{equation}
\| z_1 \| \le \gamma \epsilon^n\ .
\end{equation}
 Finally, 
  \begin{equ}\label{eq:shift}
  \phi_1-\phi_0=-2\gamma \epsilon ^{2n-1}T+ h.o.t.~.   
  \end{equ}
\end{theorem}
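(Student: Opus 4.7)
The strategy is the standard modulation-equation decomposition from orbital-stability theory, adapted to this Hamiltonian-plus-localized-dissipation setting. One writes any solution of \eqref{eq:main} in the form
\[
u_j(t) \;=\; e^{\I(t\phi(t)+\theta(t))}\,p(\phi(t))_j + z_j(t),
\]
with the two real parameters $\phi(t),\theta(t)$ chosen at each time so that $z(t)$ lies in the orthogonal complement of the two-dimensional tangent plane of the breather cylinder at $(\phi(t),\theta(t))$. Substituting this ansatz into \eqref{eq:main} and using that $e^{\I\phi t}p(\phi)$ exactly solves the undamped equation ($\gamma=0$) reduces \eqref{eq:main} to an equivalent system
\[
\dot z \;=\; L(\phi)\,z \;-\; \dot\phi\,\tau_1(\phi) \;-\; (t\dot\phi+\dot\theta)\,\tau_2(\phi) \;+\; F(\phi,\theta) \;+\; N(\phi,\theta,z),
\]
where $L(\phi)$ is the linearization of the conservative flow at $p(\phi)$, the vectors $\tau_1\propto\partial_\phi p$ and $\tau_2\propto\I p$ span the tangent plane (after removing the overall $t$-dependent phase), $F$ is the dissipative forcing supported at site $n$ and of size $O(\gamma\epsilon^n)$ because $p_n\sim\epsilon^{n-1}$ by \tref{th:fixed2}, and $N$ is at least quadratic in $z$.

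The orthogonality constraint provides exactly the two scalar conditions needed to solve for $\dot\phi$ and $t\dot\phi+\dot\theta$. Projecting the equation onto $\tau_1,\tau_2$ and using that $\tau_1,\tau_2\in\ker L$ (these are the infinitesimal generators of the $\phi$- and $\theta$-symmetries) gives an invertible $2\times 2$ linear system driven by the inner products $\langle F,\tau_j\rangle$ and $\langle N,\tau_j\rangle$. Direct evaluation of $\langle F,\tau_j\rangle$, dominated by the single $j=n$ coordinate where $|p_n|^2\sim\epsilon^{2(n-1)}$, produces the leading-order formulas
\[
\dot\phi(t) \;=\; -2\gamma\epsilon^{2n-1}\bigl(1+O(\epsilon)\bigr)+O\bigl(\|z\|^2\bigr), \qquad t\dot\phi+\dot\theta \;=\; O\bigl(\gamma\epsilon^{2n-1}\bigr)+O\bigl(\|z\|^2\bigr).
\]
Integrating over $[0,T]$ with $T=\mathrm{const}\cdot\epsilon^{-1}$ then yields both \eqref{eq:shift} and the bounds $|\delta(T)|,|s(T)|=O(\gamma\epsilon^{2n-2})$, which are well below $1$.

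To control $z$, project the system onto the normal bundle to obtain $\dot z = L_\perp(\phi(t))\,z + F_\perp + N_\perp(z)$, and run a continuity argument on $[0,T]$. Assume a priori $\|z(t)\|\le 2\gamma\epsilon^n$ throughout, and apply Duhamel to the non-autonomous propagator $U_\perp(t,s)$ generated by $L_\perp(\phi(\cdot))$:
\[
\|z(t)\| \;\le\; \|U_\perp(t,0)\|\,\|z(0)\| + \int_0^t \|U_\perp(t,s)\|\bigl(\|F_\perp(s)\|+\|N_\perp(z(s))\|\bigr)\,ds.
\]
Given the uniform propagator bounds to be supplied by the semigroup analysis of Sections~\ref{s:evolution}--\ref{sec:semigroup}, the first term stays $\le C\gamma\epsilon^n$, the nonlinear contribution is $O(T\gamma^2\epsilon^{2n})$ and negligible, and the forcing integral is controlled by the combination of the weak but uniform damping on the normal bundle with the smallness $\|F_\perp\|=O(\gamma\epsilon^n)$. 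Choosing constants appropriately delivers strict inequality $\|z(t)\|<2\gamma\epsilon^n$ throughout $[0,T]$ and $\|z(T)\|\le\gamma\epsilon^n$, closing the bootstrap. The renormalization \eqref{eq:re} is then supplied by the implicit function theorem: solve $e^{\I(T\phi_1+\theta_1)}p(\phi_1)-e^{\I(T\phi(T)+\theta(T))}p(\phi(T))=z(T)-z_1$ for $(\phi_1,\theta_1)$ near $(\phi(T),\theta(T))$, defining $z_1$ as the normal component of the residue at the new base point; non-degeneracy of the projection onto the tangent plane makes this uniquely solvable, and $\|z_1\|\le\gamma\epsilon^n$ follows directly from $\|z(T)\|\le\gamma\epsilon^n$.

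The decisive technical obstacle is the uniform propagator bound on $U_\perp(t,s)$ used above. The undamped operator $L(\phi)$ comes from a Hamiltonian system, so its spectrum lies on the imaginary axis apart from the two-dimensional symmetry kernel; the dissipative perturbation is only $O(\gamma\epsilon)$ in operator norm and, worse, it is \emph{localized at site $n$}, far from where the breather has its mass (site $1$). Showing that this remote, weak damping nevertheless produces a bound on $U_\perp(t,s)$ that is uniform in $\epsilon$ for $t-s$ as large as $O(\epsilon^{-1})$ -- the ``somewhat surprising'' damping flagged in the introduction -- is the heart of the paper and is carried out in Sections~\ref{s:evolution}--\ref{sec:semigroup}. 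Everything else in the proof of \tref{thm:main} is a bootstrap built on top of that semigroup estimate.
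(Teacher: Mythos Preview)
Your overall architecture---modulation ansatz, orthogonality constraints yielding $\dot\phi$ and $t\dot\phi+\dot\theta$, bootstrap on $z$ via Duhamel and the linear semigroup bound, then re-orthogonalization by the implicit function theorem---matches the paper. But there is one structural discrepancy with the paper's route and one genuine gap.

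\emph{Fixed versus moving projection.} You impose $z(t)\perp$ tangent plane at the \emph{moving} point $(\phi(t),\theta(t))$; the paper instead keeps $\zeta(t)\in\mathrm{Range}(\P_{\phi_0})$, orthogonal at the \emph{fixed initial} point throughout $[0,T]$. This is why the paper's modulation equations (Lemmas~\ref{lem:U}--\ref{lem:phi}, Proposition~\ref{prop:evolution}) all carry $\delta=\phi(t)-\phi_0$ corrections, why the linear part of the $\zeta$-equation is the autonomous $\LL_{\phi_0,\gamma}$ (so Corollary~\ref{cor:semi} applies directly, rather than to a non-autonomous $U_\perp(t,s)$), and crucially why the re-orthogonalization of Section~\ref{sec:reorthog} is a genuine step: $\zeta(T)$ is orthogonal at $(\phi_0,\theta_0)$ but \emph{not} at $(\phi(T),\theta(T))$, and Proposition~\ref{prop:IFT} together with the estimates \eqref{eq:phijump}--\eqref{eq:reorthog2} is what produces $(\phi_1,\theta_1,z_1)$. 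With your moving projection, \eqref{eq:re} would hold trivially with $(\phi_1,\theta_1,z_1)=(\phi(T),\theta(T),z(T))$, so your including it as a separate implicit-function argument suggests a conflation of the two schemes. Either scheme can be made to work; the paper's choice buys an autonomous linear problem at the price of the $\delta$-bookkeeping and a nontrivial re-projection.

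\emph{The forcing integral.} The assertion that ``the forcing integral is controlled by the combination of the weak but uniform damping on the normal bundle with the smallness $\|F_\perp\|=\OO(\gamma\epsilon^n)$'' does not close the bootstrap as stated. The damping rate is only $\kappa\gamma\epsilon$ and $T=\mathrm{const}\cdot\epsilon^{-1}$, so $\kappa\gamma\epsilon T=\OO(\gamma)$ and the crude Duhamel bound gives
\[
\int_0^T\|U_\perp(T,s)\|\,\|F_\perp\|\,ds \;\lesssim\; \gamma\epsilon^n\int_0^T e^{-\kappa\gamma\epsilon(T-s)}\,ds \;\approx\; \gamma\epsilon^n\cdot T \;=\; \OO(\gamma\epsilon^{\,n-1}),
\]
a full factor $\epsilon^{-1}$ too large for the target $\|z\|\le 2\gamma\epsilon^n$. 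What actually controls this term is not the damping but the \emph{oscillation}: by \tref{thm:imag} the normal eigenvalues of $\LL_{\phi_0,\gamma}$ are $\pm\I+\OO(\epsilon)$, so $\LL^{-1}=\OO(1)$ on the normal subspace and for a slowly varying forcing one has $\int_0^t e^{(t-s)\LL}F\,ds=\LL^{-1}(e^{t\LL}-I)F+\mathrm{h.o.t.}=\OO(\|F\|)=\OO(\gamma\epsilon^n)$. The paper organizes its bookkeeping (Proposition~\ref{prop:evolution} and Section~\ref{sec:zeta}) so that the inhomogeneity remaining in the $\zeta$-equation is only $\OO(\gamma^2\epsilon^{4n-2}t)$, for which the crude Duhamel already suffices; in your formulation the oscillatory mechanism must be invoked explicitly, and ``weak uniform damping'' alone will not do.
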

}

\begin{remark}
  The important consequence of \tref{thm:main} is the observation that
  the bounds propagate, so we can restart the evolution, using initial conditions $(\phi_1,\theta_1, z_1)$ instead
  of $|(\phi_0,\theta_0,z_0)$, and therefore one can move to $\phi_2$,
  $\theta _2$, $z_2$, and so on, with a controlled bounds on $\phi_k$,
  which apply at least as long as $\phi_0-\phi_k\le \gamma \epsilon
  ^n$.
  Also note that the deviation from the cylinder is as shown in
  \fref{fig:1}, namely, the orbit can get away from $\|z_k\|\le \gamma
  \epsilon ^n$, during the times between the stopping times $kT$, $k=1,2,\dots$.
\end{remark}

The remainder of the paper is devoted to the proof of \tref{thm:main}.
After some introductory results, the first important bound is on the
linear semigroup with the very weak dissipation in
\sref{sec:semigroup}.
The generator is called $\LL_{\phi\gamma }$, see \eref{eq:ll} and its
associated bound (in Corollary \ref{cor:semi}).
In \sref{sec:estimate}, we study in detail the projection onto the
  complement of the tangent space to the cylinder at $(\phi,\theta) $.
This allows, in \sref{sec:zeta}, to estimate the contraction (after
time $T$) of the $z$-component, orthogonal to the tangent space. We do
this in two steps, first we evolve $z$ while staying in the basis defined at
$\phi_0,\theta_0 $. Then, in \sref{sec:reorthog}, we re-orthogonalize so
that we obtain \eref{eq:re}. Finally, \sref{sec:iter} gives some more
details about restarting the iterations from $\phi_1$, $\theta _1$,
$z_1$ to $\phi_2,\dots$.

The precise statement will be formulated and proved as
\tref{thm:evolution}.

\begin{figure}[t!]
  \begin{center}
    \includegraphics[width=0.8\textwidth]{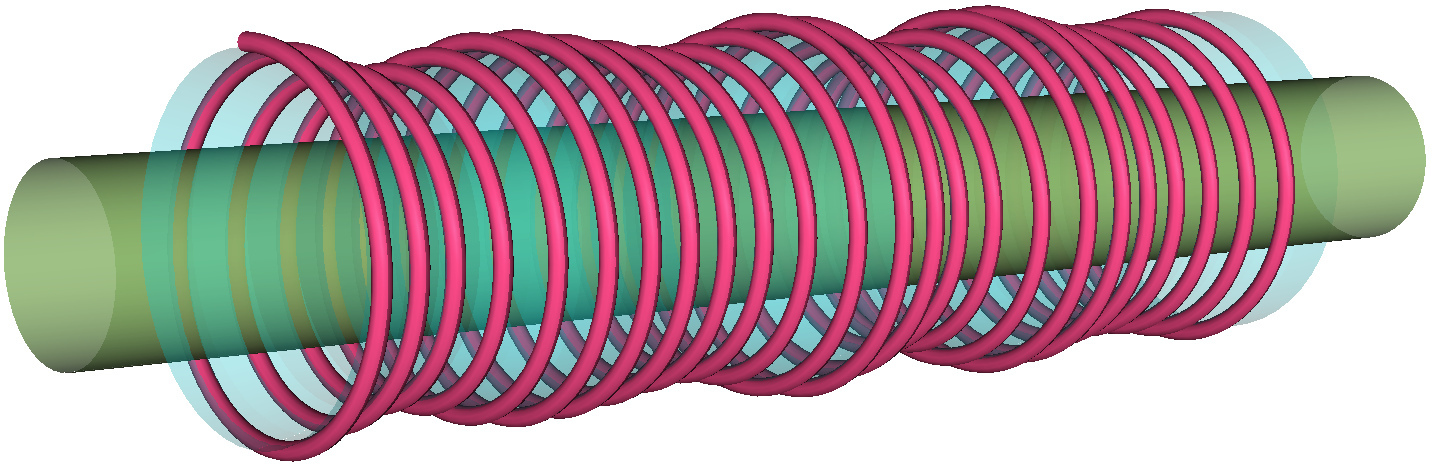}
    \end{center}
\caption{Illustration of the results: Since phase space is
  high-dimensional, we draw the red curve in the same coordinate
  system as the cylinder, but it really stays in a subspace of
  $\complex^n$ which is orthogonal to the 2-dimensional space of the cylinder. When there is no dissipation
  ($\gamma =0$), then the system has a cylinder of fixed points in
  rotating frames (shown in green). This cylinder is
  parameterized by the values of $\epsilon $, \ie the energy of the
  fast coordinate $u_1$. When $\gamma >0$, the fixed points disappear,
  and instead the system hovers near the cylinder, and spiraling
  around it, with a phase speed of $2\gamma \epsilon ^{2n-1}$. We show
  that the orbit of all such solutions stays within a distance
  $\OO(\epsilon ^n)$, as long as $\epsilon $ remains small (it
  actually increases with time).
}
\label{fig:1}       
\end{figure}

\begin{remark}
  From the results of
    \cite{EckmannWayne2018} and \eref{eq:shift} one can also conclude more details about
    the windings of \fref{fig:1}. The $m^{\rm th}$ turn { finishes after a time
    $t_m \approx \sqrt{\frac{2\pi m}{\gamma \epsilon ^{2n-1}}}$, and the ``horizontal''}
    spacing (in $\epsilon $) between the windings is $2\sqrt{2\pi\gamma
      \epsilon ^{2n-1}}(\sqrt{m+1}-\sqrt{m})$, up to terms of higher order.
\end{remark}

We will study \eref{eq:main} for the remainder of this paper.   We will also
sometimes rewrite
this equation in the equivalent real form by defining $w_j = p_j + \I q_j$,
which yields the system
of equations, for $j=1,\dots,n$:
\begin{equa}[eq:main_real]
  \dot q_j &= -\epsilon (\Delta p)_j
  -p_j+(q_j^2+p_j^2)p_j-\delta_ {j,n}\gamma\epsilon q_n~,\\
  \dot p_j &= \hphantom{-}\epsilon (\Delta q)_j +q_j-(q_j^2+p_j^2)q_j-\delta_
{j,n}\gamma\epsilon p_n~~.
\end{equa}
Note that if  $\gamma=0$, this is a Hamiltonian system with:
\begin{equa}[e:hamiltonian]
 H&=\frac{\epsilon }{2} \sum_{j<n}
\left((p_j-p_{j+1})^2+(q_j-q_{j+1})^2\right)\\ &~~~-
\sum_{j=1}^n \left(\frac{1}{2}(p_j^2+q_j^2)-\frac{1}{4}
(p_j^2+q_j^2)^2\right)~.
\end{equa}

Finding a periodic solution of the form \eref{eq:wide} (\ie a fixed point in
the rotating coordinate system) is reduced
to finding roots of the system of equations
\begin{equa}[eq:noomega]
  0&= -\epsilon (\Delta p)_j -p_j+(q_j^2+p_j^2)p_j~,\\
  0 &= \epsilon (\Delta q)_j +q_j-(q_j^2+p_j^2)q_j~.
\end{equa}
Since we are also interested in solutions which rotate (slowly),
replacing $w$ by $e^{\I \phi_0 t}w$,
we study instead of \eref{eq:main_real} (resp.~\eref{eq:noomega}) the related equation
\begin{equa}[eq:main_real_phi]
  \dot q_j &= -\epsilon (\Delta p)_j
  -(1+\phi_0)p_j+(q_j^2+p_j^2)p_j-\delta_ {j,n}\gamma\epsilon q_n~,\\
  \dot p_j &= \hphantom{-}\epsilon (\Delta q)_j +(1+\phi_0)q_j-(q_j^2+p_j^2)q_j-\delta_
{j,n}\gamma\epsilon p_n~~,
\end{equa}where the $\phi_0$ dependence comes from differentiating the
exponential factor $e^{\I \phi_0 t}$.
\begin{remark}
  We use $\phi_0$ to {designate} a constant rotation speed, while later, $\phi$ will
  stand for a time-dependent rotation speed.
\end{remark}

\begin{remark}
  The reader who is familiar with the paper \cite{EckmannWayne2018} can
  jump to \sref{s:evolution}, since much of the material in this
  section and the next is basically repeated from that reference.
\end{remark}

\begin{theorem}\label{th:fixed2} Suppose that the damping coefficient
  $\gamma$ equals $0$ in
\eref{eq:main_real_phi}.  There exist constants $\epsilon_* > 0$, $\phi_* >0$, such that
for
$|\epsilon| < \epsilon_*$ and $ |\phi| < \phi_*$, \eref{eq:main_real_phi} has a
periodic solution
of the form $w_j(t;\phi_0) = e^{\I  t\phi_0 } p_j^\star (\phi_0)$, with $p_1^\star
(\phi_0) = 1+{\OO}(\epsilon, \phi_0)$,
and $p_j^\star (\phi_0) = {\OO}(\epsilon^{j-1})$ for $j = 2, \dots, n$.
\end{theorem}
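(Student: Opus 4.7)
My plan is to (i) use the time-periodic ansatz to reduce the fixed-point problem to a single algebraic system for real amplitudes, (ii) rescale those amplitudes to separate the expected orders in $\epsilon$, and (iii) apply the implicit function theorem at the $\epsilon=\phi_0=0$ limit, where the Jacobian turns out to be lower-triangular and trivially invertible.

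For step (i), substitute $w_j(t)=e^{\I t\phi_0}p^\star_j(\phi_0)$ with $p^\star_j$ real into \eref{eq:main_real_phi} with $\gamma=0$. Setting the imaginary part $q_j\equiv 0$ makes the second of the two equations trivial, and the first collapses to the stationary real system
\begin{equation*}
F_j(p,\epsilon,\phi_0)\;\equiv\;-\epsilon(\Delta p)_j-(1+\phi_0)p_j+p_j^{\,3}\;=\;0,\qquad j=1,\dots,n.
\end{equation*}
Any real root of $F=0$ then gives an honest periodic solution of the required form.

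For steps (ii)--(iii), set $p_1=\tilde p_1$ and $p_j=\epsilon^{j-1}\tilde p_j$ for $j\ge 2$, and divide the $j$-th equation by $\epsilon^{j-1}$ to obtain a new system $\tilde F_j(\tilde p,\epsilon,\phi_0)=0$. A short computation shows that at $\epsilon=\phi_0=0$ the rescaled system reduces to the scalar equation $-\tilde p_1+\tilde p_1^{\,3}=0$ together with the chain $-\tilde p_{j-1}-\tilde p_j=0$ for $j\ge 2$, so the branch we want starts at $\tilde p^{(0)}_j=(-1)^{j-1}$, i.e.\ $\tilde p_1^{(0)}=1$. At this point the Jacobian $\partial_{\tilde p}\tilde F$ is the lower-bidiagonal matrix whose diagonal is $(2,-1,-1,\dots,-1)$; it is obviously invertible. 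The implicit function theorem then yields a unique smooth branch $\tilde p^\star(\epsilon,\phi_0)$ through $\tilde p^{(0)}$ for all $|\epsilon|,|\phi_0|$ small, and unwinding the rescaling gives $p^\star_1=1+\OO(\epsilon,\phi_0)$ and $p^\star_j=\OO(\epsilon^{j-1})$ for $j\ge 2$, exactly as claimed.

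There is essentially no serious obstacle: the graded structure of the nearest-neighbour coupling $\epsilon\Delta$ is exactly what forces the Jacobian of $\tilde F$ to be lower-triangular, and the boundary modifications of the Laplacian at $j=1,n$ do not alter this structure. The only delicate point is that one must actually rescale \emph{before} applying IFT; applied directly to $F$ the implicit function theorem still yields existence of a branch $p^\star(\epsilon,\phi_0)$ through $(1,0,\dots,0)$, but to extract the asymptotic orders $p^\star_j=\OO(\epsilon^{j-1})$ one would then need a separate order-by-order bookkeeping in the power series expansion of $p^\star$ in $\epsilon$.
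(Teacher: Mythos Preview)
Your argument is correct. Both your proof and the paper's reduce to the same algebraic system $F_j(p;\phi_0,\epsilon)=-\epsilon(\Delta p)_j-(1+\phi_0)p_j+p_j^3=0$ and both apply the implicit function theorem at $\epsilon=\phi_0=0$; the paper simply applies IFT directly to $F$ at the point $p^0=(1,0,\dots,0)$, where the Jacobian is the \emph{diagonal} matrix $\mathrm{diag}(2,-1,\dots,-1)$, and then reads off the leading terms of the expansion by hand.

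The genuine difference is your rescaling $p_j=\epsilon^{j-1}\tilde p_j$ before invoking IFT. This replaces the diagonal Jacobian by the lower-bidiagonal one you describe, and has the payoff you identify: the asymptotic orders $p_j^\star=\OO(\epsilon^{j-1})$ are built into the conclusion rather than requiring a separate inductive bookkeeping in the power series. The paper's proof, as written, establishes existence and analyticity of the branch but only asserts the general-$j$ order by displaying the pattern of the first few terms; your approach closes that small gap cleanly. The cost is that one must verify (as you implicitly do) that the rescaled map $\tilde F$ extends smoothly across $\epsilon=0$, which is immediate here since each $\tilde F_j$ is polynomial in $\epsilon$.
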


\begin{proof}

If we insert $w_j(t;\phi_0) = e^{\I  t\phi_0 } p_j^\star (\phi_0)$ into
\eref{eq:main}, and take real
and imaginary parts, we find that the amplitudes
 $p^\star\in\real^n $ of these periodic orbits are (for $\gamma=0$) solutions of
\begin{equ}\label{eq:fp}
F_j(p;\phi_0,\epsilon) = -\epsilon (\Delta p)_j - (1+\phi_0) p_j + p_j^3 = 0\ ,\ \
j = 1, \dots , n\ .
\end{equ}
Setting $p^0_j = \delta_{j,1}$, we have
\begin{equ}
F_j(p^0;0,0) = 0\ ,
\end{equ}
for all $j$.  Furthermore, the Jacobian matrix at this point is the diagonal
matrix
\begin{equ}
\left( D_{p} F (p^0;0,0) \right)_{i,j} = (3 \delta_{i,1} -1) \delta_{i,j}~,
\end{equ}
which is obviously invertible.

Thus, by the implicit function theorem, for $(\phi_0,\epsilon)$ in some
neighborhood of the
origin, \eref{eq:fp} has a unique fixed point $p = p^\star (\phi_0,\epsilon)$,
and since $F$ depends
analytically on $(\phi_0,\epsilon)$, so does $p^\star (\phi_0,\epsilon)$.

It is easy to compute the first few terms of this fixed point:
 \begin{equa}[eq:sol]
 {p_1^\star }    &=1+{\frac {1}{2}}(
\phi_0-\epsilon)+\OO_2~,\\
{p_2^\star }   &=-\epsilon+\OO_2~,\\
{p_3^\star }   &=\epsilon^{2}+\OO_3~,\\
&\dots\\
{p_j^\star }   &=(-1)^{j-1}\epsilon^{j}+\OO_{j+1}~,
\end{equa} 
 where $\OO_k$ denotes terms of order $k$ in
 $\phi_0,\epsilon $ together.

\end{proof}
\begin{remark} Since \eref{eq:main}  is invariant under complex rotations $w_ j
\to e^{\I  \theta_0} w_j$,
we actually have a circle of fixed points (when $\gamma=0$).  However,
these are the only fixed points with $|w_1| \approx 1$. We will
continue with $\theta_0=0$, and reintroduce $\theta_0\ne0$ only in
\sref{s:evolution}.
\end{remark}

\section{The {eigenspace} of the eigenvalue 0}

Consider the linearization of the system \eref{eq:main_real_phi} around
the periodic orbit (fixed point), we found in \tref{th:fixed2}.
Denote by $Z_*$ this solution,
\begin{equ}
Z_* =(p_1^\star ,p_2^\star,\dots ,p_n^\star ,q_1^\star ,q_2^\star,\dots,
q_n^\star )^\T~,
\end{equ}
where $q_j=0$ and $p_j=p_j(\phi_0,\epsilon )$ as found in \tref{th:fixed2}.
{ In order to avoid overburdening the notation, we will write out the formulas
which follow for the case $n=3$ --- the expressions for general  (finite) values of $n$ are very similar.}
 We also omit the $\epsilon $ dependence from
$p(\phi_0,\epsilon )$.
The linearization of the evolution \eref{eq:main_real_phi} at $Z_*$ leads
(for $\gamma=0$) to 
an equation of the form
$$
\frac{\d x}{\d t} = M_{\phi_0,\epsilon } x =  \begin{pmatrix}
  0&A_\penull\\B_\penull&0
  \end{pmatrix} x\ ,
  $$
and with $1_{\phi_0}\equiv(1+\phi_0)$:
\begin{equa}[eq:aphi]
  A_{\phi_0,\epsilon}=
  \begin{pmatrix}
    1_{\phi_0}-\epsilon -(p^\star _1)^2 &\epsilon &0\\
    \epsilon&  1_{\phi_0}-2\epsilon-(p^\star _2)^2 &\epsilon \\
    0&\epsilon&1_{\phi_0}-\epsilon -(p^\star _3)^2
  \end{pmatrix}~,
\end{equa}
where the $p^\star _j=p(\phi_0)_j$ are the stationary solutions of \eref{eq:main_real_phi}.
Similarly,
\begin{equa}[eq:bphi]
B_{\phi_0,\epsilon }=
  \begin{pmatrix}
    -1_{\phi_0}+\epsilon +3(p_1^\star )^2 &-\epsilon &0\\
    -\epsilon&  -1_{\phi_0}+2\epsilon +3(p_2^\star )^2 &-\epsilon \\
    0&-\epsilon&-1_{\phi_0}+\epsilon +3(p_3^\star )^2
  \end{pmatrix}~.
\end{equa}
Similar expressions hold for other values of $n$.

Among the key facts that we will establish below, is that $M_{\phi_0,\epsilon}$ has
a two-dimensional zero eigenspace, with an explicitly computable basis, for all values of $\epsilon$.
Then, in subsequent sections we will show that the remainder of the spectrum lies on the imaginary
axis and that all non-zero eigenvalues are simple and separated from the remainder of the spectrum
of $M_{\phi_0,\epsilon}$ by a distance at least $C \epsilon$.  All of these facts turn out to be essential
for our subsequent calculations and establishing them is complicated by the extreme degeneracy
of the eigenvalues of $M_{\phi_0,0}$ about which we wish to perturb.

The following lemma will allow to simplify notation:
\begin{lemma}\label{lem:binverse}
  One has the identity
  \begin{equ}\Label{eq:ddphi}
    \partial_\phi p(\phi_0)=B_{\phi_0,\epsilon}^{-1} p(\phi_0)~.
  \end{equ}
\end{lemma}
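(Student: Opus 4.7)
The plan is to read this identity as an instance of implicit differentiation applied to the fixed-point equation $F_j(p(\phi_0);\phi_0,\epsilon)=0$ of \tref{th:fixed2}, and to recognize the resulting Jacobian as $B_{\phi_0,\epsilon}$ itself.

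First I would recall that $p(\phi_0)$ is defined by
\begin{equ}
F_j(p(\phi_0);\phi_0,\epsilon) \;=\; -\epsilon (\Delta p(\phi_0))_j \;-\; (1+\phi_0)\, p_j(\phi_0) \;+\; p_j(\phi_0)^3 \;=\; 0
\end{equ}
for all $j=1,\dots,n$. Since this is a smooth identity in $\phi_0$, I would differentiate it in $\phi_0$ and apply the chain rule, obtaining
\begin{equ}
\sum_k \frac{\partial F_j}{\partial p_k}(p(\phi_0);\phi_0,\epsilon)\, \partial_\phi p_k(\phi_0) \;+\; \frac{\partial F_j}{\partial \phi_0}(p(\phi_0);\phi_0,\epsilon) \;=\; 0~.
\end{equ}
Direct inspection gives $\partial F_j/\partial \phi_0 = -p_j$, so the task reduces to identifying the Jacobian $\partial F/\partial p$.

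Next I would compute $\partial F_j/\partial p_k$ entry by entry and compare with the matrix $B_{\phi_0,\epsilon}$ written down in \eref{eq:bphi}. Differentiating $-\epsilon(\Delta p)_j$ in $p_k$ reproduces $-\epsilon$ times the discrete Laplacian (with its modified endpoints), which contributes $+\epsilon$ to diagonal entries $(1,1)$ and $(n,n)$, $+2\epsilon$ to the interior diagonal entries, and $-\epsilon$ to the sub- and super-diagonal. The remaining terms yield a diagonal contribution $-(1+\phi_0) + 3p_j^{\star 2}$. This matches \eref{eq:bphi} exactly, so $\partial F/\partial p = B_{\phi_0,\epsilon}$.

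Combining the two gives $B_{\phi_0,\epsilon}\,\partial_\phi p(\phi_0) = p(\phi_0)$, and the identity follows upon inverting $B_{\phi_0,\epsilon}$. The only point that requires a brief check is that $B_{\phi_0,\epsilon}$ is indeed invertible in the range of $(\phi_0,\epsilon)$ covered by \tref{th:fixed2}: using the asymptotics $p_1^\star = 1 + {\OO}(\phi_0,\epsilon)$ and $p_j^\star = {\OO}(\epsilon^{j-1})$ for $j\ge 2$, the matrix $B_{\phi_0,\epsilon}$ is a small perturbation of the diagonal matrix $\mathrm{diag}(2,-1,\dots,-1)$, hence invertible for $|\epsilon|,|\phi_0|$ sufficiently small. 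In fact this is exactly the non-degeneracy already used in the implicit function argument of \tref{th:fixed2}, so no new work is needed; this is the step I expected to be the one obstacle, but it is essentially free.
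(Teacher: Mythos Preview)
Your proof is correct and follows exactly the same approach as the paper: the paper's own proof is the one-line remark ``This follows by differentiating \eref{eq:fp} and comparing to the definition of $B_{\phi_0,\epsilon}$ in \eref{eq:bphi},'' and you have simply spelled out that differentiation and comparison in detail. Your added verification that $B_{\phi_0,\epsilon}$ is invertible (as a perturbation of $\mathrm{diag}(2,-1,\dots,-1)$) is a welcome bit of care that the paper leaves implicit.
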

\begin{proof}
  This follows by differentiating \eref{eq:fp} and comparing to the
  definition of $B_{\phi_0,\epsilon}$ in \eref{eq:bphi}.
\end{proof}

\begin{lemma} Define  $B_{\phi_0,\epsilon }=L-\phi_0 \one+3(\pphi_0)^2$, with $L=\epsilon \Delta
-\one$. That is, we view $B_{\phi_0,\epsilon}$ as a real
$n\times n$ matrix and $(\pphizero)^2$ is the
diagonal matrix with components $((\pphizero_1)^2,\dots,(\pphizero_n)^2)$.  
Then the  zero eigenspace of the matrix $M_{\phi_0,\epsilon }$ is
spanned by
the $2n$-component vectors
  \begin{equa}[eq:v]
\v1_{\phi_0}&=\pmat{0\\\pphizero}~,\\
\v2_{\phi_0}&=\pmat{B_{\phi_0,\epsilon }^{-1}\,\pphizero\\0}~,
\end{equa}
\end{lemma}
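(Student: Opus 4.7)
My plan is essentially a direct two-line computation followed by a dimension-count argument, since the lemma is really a statement that $v^{(1)}_{\phi_0}$ and $v^{(2)}_{\phi_0}$ form a Jordan chain of length two for the eigenvalue $0$.

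First, I would read off from \eref{eq:aphi} that $A_{\phi_0,\epsilon}=(1+\phi_0)\one+\epsilon\Delta-\mathrm{diag}\!\bigl((\pphi_0)^2\bigr)$. Applying it to $p(\phi_0)$ yields
\[
  \bigl(A_{\phi_0,\epsilon}\,\pphi_0\bigr)_j
  \;=\;(1+\phi_0)\,p(\phi_0)_j+\epsilon(\Delta\pphi_0)_j-p(\phi_0)_j^{\,3}
  \;=\;-F_j\bigl(\pphi_0;\phi_0,\epsilon\bigr),
\]
which vanishes by the fixed-point equation \eref{eq:fp}. Therefore $M_{\phi_0,\epsilon}\,\v1_{\phi_0}=\pmat{A\,\pphi_0\\0}=0$, so $\v1_{\phi_0}\in\ker M_{\phi_0,\epsilon}$. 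Conceptually, $\v1$ is the infinitesimal generator of the $U(1)$ rotation symmetry $w\mapsto e^{\I\theta_0}w$.

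Second, I would compute $M_{\phi_0,\epsilon}\,\v2_{\phi_0}=\pmat{0\\B_{\phi_0,\epsilon}B_{\phi_0,\epsilon}^{-1}\pphi_0}=\pmat{0\\\pphi_0}=\v1_{\phi_0}$. Hence $M^{2}\v2_{\phi_0}=M\v1_{\phi_0}=0$, so $\v2_{\phi_0}$ is a generalized null vector lying one step above $\v1_{\phi_0}$ in a size-two Jordan chain. Using \lref{lem:binverse}, the top block of $\v2$ is simply $\partial_{\phi_0}p(\phi_0)$, i.e.\ the tangent direction to the family of fixed points obtained by varying $\phi_0$; this makes the Jordan structure geometrically transparent, since moving along the $\phi_0$-family changes the rotation rate.

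Finally, to show the generalized zero eigenspace has dimension exactly two, I would perturb from the degenerate point $\epsilon=0$, $\phi_0=0$. There $p^{\star}=(1,0,\dots,0)$ gives $A_{0,0}=\mathrm{diag}(0,1,\dots,1)$ and $B_{0,0}=\mathrm{diag}(2,-1,\dots,-1)$, so the eigenvalues of $M_{0,0}^{2}$ (which coincide with those of $A_{0,0}B_{0,0}$) are $0,-1,\dots,-1$. Thus $M_{0,0}$ has eigenvalues $0$ of algebraic multiplicity $2$ together with $\pm\I$, each of multiplicity $n-1$; the two explicit vectors $\v1_{0},\v2_{0}$ already exhibit the full root subspace at eigenvalue $0$ as a single Jordan block. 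Since the remaining eigenvalues sit at distance $1$ from the origin, continuity of the spectrum under the analytic perturbation $(\phi_0,\epsilon)\mapsto M_{\phi_0,\epsilon}$ keeps them bounded away from $0$ for small $\phi_0,\epsilon$, and so the algebraic multiplicity of $0$ cannot grow; combined with the 2D invariant subspace exhibited above, it remains exactly two, and $\{\v1_{\phi_0},\v2_{\phi_0}\}$ is a basis.

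The only subtle point is this last maximality claim: if one wanted to avoid invoking spectral continuity, one could instead directly prove $\mathrm{rank}\,M_{\phi_0,\epsilon}=2n-2$ by, e.g., observing that $\det A_{\phi_0,\epsilon}$ vanishes simply (because $p(\phi_0)$ spans $\ker A$ and $B$ is invertible), which forces $\dim\ker M=1$ and $\dim\ker M^{2}=2$. This latter route is a routine calculation and is the only place where I expect any non-triviality beyond bookkeeping.
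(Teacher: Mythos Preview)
Your proof is correct and follows essentially the same route as the paper: both arguments verify that $M\v1_{\phi_0}=0$ (the paper phrases this via the $U(1)$ symmetry, you via the fixed-point equation~\eref{eq:fp}, which is the same computation) and that $M\v2_{\phi_0}=\v1_{\phi_0}$ by direct inspection of the block structure. The one difference is that you supply an explicit dimension-count argument (perturbing from $M_{0,0}$) to show the generalized null space is exactly two-dimensional, whereas the paper's proof of this lemma omits this and instead establishes the full spectral picture later in \tref{thm:imag} and the surrounding analysis; your self-contained treatment is a reasonable addition.
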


\begin{proof}
To see that $M_{\phi_0,\epsilon }\v1_{\phi_0}=0$, note that \eref{eq:dNLS}
is invariant under $u\to e^{\I \theta } u$. Thus, viewed in
$\complex^n$, the quantity $e^{\I \theta }(p(\phi_0,\epsilon )+\I 0)$ is
a solution for all $\theta $. Taking the derivative w.r.t.~$\theta $,
at $\theta =0$ and considering the real and imaginary parts of
the resulting equation shows that $\v1_{\phi_0}$ is a solution of
{$M_{\phi_0,\epsilon }\v1_{\phi_0}=0$. }  From the form of $M_{\phi_0,\epsilon }$ and the
invertibility of $B_{\phi_0,\epsilon } $ we see immediately that $\v2_{\phi_0}$ is mapped
onto the direction of $\v1_{\phi_0}$.
\end{proof}

We will also need the adjoint eigenvectors of $M$:
\begin{lemma}
  The adjoint eigenvectors are given by
  \begin{equa} [eq:n1n2]
\n1_{\phi_0} &= (2+\OO(\phi_0,\epsilon )) \cdot(0,B^{-1}_{\phi_0} p(\phi_0))^\T\ , \\ 
\n2_{\phi_0} &=  (2+\OO(\phi_0,\epsilon ))\cdot (p(\phi_0),0)^\T ~.
  \end{equa}
  They are normalized to satisfy
  \begin{equa}[eq:n1v1]
    \langle \n1_{\phi_0}|\v1_{\phi_0}\rangle&=
    \langle \n2_{\phi_0}|\v2_{\phi_0}\rangle=1~,\\
    \langle \n2_{\phi_0}|\v1_{\phi_0}\rangle&=
    \langle \n1_{\phi_0}|\v2_{\phi_0}\rangle=0~.\\
  \end{equa}
\end{lemma}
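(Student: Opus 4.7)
The plan is to exploit the fact that both $A_{\phi_0,\epsilon}$ and $B_{\phi_0,\epsilon}$ are real symmetric matrices (each is tridiagonal with constant off-diagonals, as displayed in \eref{eq:aphi} and \eref{eq:bphi}), so that $M_{\phi_0,\epsilon}^\T = \pmat{0 & B_{\phi_0,\epsilon}\\ A_{\phi_0,\epsilon} & 0}$. A null vector $\pmat{a\\b}$ of $M^\T$ is therefore characterized by $A_{\phi_0,\epsilon}\,a=0$ and $B_{\phi_0,\epsilon}\,b=0$. By \lref{lem:binverse}, $B_{\phi_0,\epsilon}$ is invertible for small $\phi_0,\epsilon$, forcing $b=0$; meanwhile, the previous lemma's identity $M_{\phi_0,\epsilon}\v1_{\phi_0}=0$ applied to $\v1_{\phi_0}=\pmat{0\\p(\phi_0)}$ is exactly the statement $A_{\phi_0,\epsilon}\,p(\phi_0)=0$. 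Hence the genuine left null vector lies in the direction $\pmat{p(\phi_0)\\0}$ claimed for $\n2_{\phi_0}$.

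The identity $M_{\phi_0,\epsilon}\v2_{\phi_0}=\v1_{\phi_0}$ (immediate from the form of $M$) shows that the zero eigenspace is a Jordan block of size two, so on the dual side I need the generalized null vector. A direct computation gives $M_{\phi_0,\epsilon}^\T \pmat{0\\B^{-1}_{\phi_0}p(\phi_0)} = \pmat{p(\phi_0)\\0}$, which is the direction of $\n2_{\phi_0}$, and a further application of $M^\T$ returns $0$. So $\n1_{\phi_0}\propto\pmat{0\\B^{-1}_{\phi_0}p(\phi_0)}$ completes the dual basis of the zero eigenspace.

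It remains to pin down the scalar prefactors via the biorthogonality conditions \eref{eq:n1v1}. The cross pairings $\langle\n2_{\phi_0}|\v1_{\phi_0}\rangle$ and $\langle\n1_{\phi_0}|\v2_{\phi_0}\rangle$ vanish automatically, since in each product one of the two blocks is zero. The two surviving diagonal pairings both reduce to the single inner product $c_i\,\langle B^{-1}_{\phi_0}\,p(\phi_0),\,p(\phi_0)\rangle$, equal for $i=1,2$ because $B_{\phi_0,\epsilon}$ is symmetric. Using \lref{lem:binverse} to rewrite $B^{-1}_{\phi_0}p(\phi_0)=\partial_\phi p(\phi_0)$, this inner product equals $\tfrac12\,\partial_\phi\|p(\phi_0)\|^2$; the expansion \eref{eq:sol} gives $\|p(\phi_0)\|^2 = 1+(\phi_0-\epsilon)+\OO_2$, so the derivative is $1+\OO(\phi_0,\epsilon)$. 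Imposing $\langle\n{i}_{\phi_0}|\v{i}_{\phi_0}\rangle=1$ then yields $c_1=c_2=2+\OO(\phi_0,\epsilon)$, exactly as asserted. The only mildly delicate point is keeping the block ordering in $M^\T$ aligned with the conventions of \eref{eq:v}; beyond that, the calculation reduces to one derivative of $\|p(\phi_0)\|^2$.
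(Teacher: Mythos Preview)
Your proof is correct and follows essentially the same approach as the paper: both exploit the symmetry of $A_{\phi_0,\epsilon}$ and $B_{\phi_0,\epsilon}$ to write $M^*=\pmat{0&B\\A&0}$, identify $\tilde n^{(2)}=(p(\phi_0),0)^\T$ as the genuine null vector of $M^*$ via $A\,p(\phi_0)=0$, and obtain $\tilde n^{(1)}=(0,B^{-1}p(\phi_0))^\T$ as the generalized null vector by a direct application of $M^*$. The only difference is that where the paper simply writes ``The normalization is checked from the definitions,'' you actually carry out that check, using \lref{lem:binverse} to rewrite $\langle B^{-1}p,p\rangle=\tfrac12\partial_\phi\|p(\phi_0)\|^2$ and then reading off the value $\tfrac12+\OO(\phi_0,\epsilon)$ from \eref{eq:sol}; this is a useful addition that makes the constant $2+\OO(\phi_0,\epsilon)$ transparent.
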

\begin{remark}
  The approximate versions are
  \begin{equa}
    \n1_{\phi_0}&\sim(0,\dots,0,1,0,\dots,0)^\T~,\\
    \n2_{\phi_0}&\sim(2,0,\dots,0)^\T~~.
  \end{equa}
\end{remark}

\begin{proof}
  
Because of the block form of $M$ and the fact that $A$ and $B$ are symmetric, we have
$$
M^*_{\phi_0,\epsilon } = \left( \begin{array}{cc} 0 & B_{\phi_0,\epsilon }
  \\ A_{\phi_0,\epsilon } & 0 \end{array} \right)~.
$$
{But then, since we know from the computation of the eigenvectors of $M$ that
$B_{\phi_0,\epsilon } \pphizero= 0$, we can
check immediately that}
$$
\tilde{n}^{(2)}_{\phi_0}
= (p(\phi_0),0)^\T
$$
satisfies $M^* \tilde{n}^{(2)}_{\phi_0} = 0$.  Likewise, 
$$
\tilde{n}^{(1)}_{\phi_0} =(0,B^{-1}_{\phi_0,\epsilon } p^\star)^\T
$$
satisfies
$$
M^*_{\phi_0,\epsilon } \tilde{n}^{(1)}_{\phi_0} = (p(\phi_0),0)^\T = \tilde n^{(2)}_{\phi_0}\ .
$$
Thus, $\tilde{n}^{(1)}_{\phi_0}$ and $\tilde{n}^{(2)}_{\phi_0}$ span the zero
eigenspace of the adjoint matrix. The normalization is checked from
the definitions.
\end{proof}

\section{Evolution equations for $\gamma>0$}\label{s:evolution}

Consider \eref{eq:main}, with dissipation:
Here, $C_\Gamma$ is not a scalar, but a diagonal matrix, whose diagonal
will be taken as $(0,0,\dots,\gamma\epsilon )\in \complex^n$.
Thus, our evolution equation is
\begin{equa}[eq:1g]
-\I \dot W =LW +|W|^2W + \I C_\Gamma W~,
\end{equa}
with $L=\epsilon \Delta -\one$, as before. We are interested in the
time dependence of two real ``slow'' variables which we call $\phi(t)$ and
$\theta (t)$, and so we set
\begin{equ}\label{eq:1gphi}
W(t)=\expitt \bigl(\psphit+z(t)\bigr)~,\quad W,z\in \complex^n~.
\end{equ}
\begin{remark}
 Recall that the notation $\phi_0$ stands for a constant phase speed,
 while $\phi=\phi(t)$ will always mean a time-dependent quantity.
\end{remark}
{Our decomposition is inspired by modulation theory approaches to study the stability of
solitary waves and patterns with respect to perturbations \cite{weinstein:1985,pego:1992,pego:1994,promislow:2002}.  In particular, we will choose
the initial decomposition of the solution so that the initial value of $z(0)$ lies in the subspace conjugate to the zero-eigenspace
of the linearization.  We then prove (somewhat surprisingly) that all modes orthogonal to the 
zero subspace are uniformly damped which allows us to show that the values of $z(t)$ remain
bounded for very long times.}
Omitting the arguments $(t)$, we find
\begin{equa}
\dot W &=\I(\phi+t\dot \phi+\dot \theta )\expit (\pphi+z)\\
&~~+ \expit \bigl( \dpphi\,\dot\phi+\dot z \bigr)~.
\end{equa}
Then \eref{eq:1g} leads to (using again that powers and products are taken componentwise),
\begin{equa}
& (\phi+t\dot \phi+\dot \theta )\expit (\pphi+z)
-\I \expit \bigl( \dpphi\dot\phi+\dot z \bigr)\\
&= \expit L(\pphi)+\expit Lz \\&+\bigl ( \expit (\pphi+z)\bigr)^2(\expmit
(\pphi+\bar z))  +\I C_\Gamma\expit(\pphi+z)~.
\end{equa}
The {factors of} $\expit$ cancel and we get
\begin{equa}
& (\phi+t\dot \phi+\dot \theta ) (\pphi+z)
-\I  \bigl( \dpphi\,\dot\phi+\dot z \bigr)\\
&=  L(\pphi)+ Lz + (\pphi+z)^2
(\pphi+\bar z)  +\I C_\Gamma(\pphi+z)~.
\end{equa}
We now expand this equation to first order in $z$ and this leads to
\begin{equa}[eq:3]
&(\phi+t\dot \phi+\dot \theta ) (\pphi+z)
-\I  \bigl( \dpphi\,\dot\phi+\dot z \bigr)\\
&=  L(\pphi)+ Lz \\
& +(\pphi)^3+2(\pphi)^2
z+\pphi\bar z\\
&+\I C_\Gamma(\pphi +z)+\OO(|z|^2) ~.
\end{equa}
Set now $z=\xi+\I\eta$.

In what follows, we will switch back and forth between the real and
complex representations of the solutions and will refer to $z= \xi + \I \eta \in \complex^n$ and 
$\zeta = (\xi,\eta) \in \real^{2n}$
interchangeably, allowing the context to distinguish between the two ways of writing the solution.
When we consider $\xi$ and $\eta$, which are $n$ dimensional vectors,
one should note that $\xi=(\xi_1,\dots,\xi_n)^\T$ while
$\eta=(\eta_1,\dots,\eta_n)^\T$. Correspondingly, we will use
the restrictions of $\P_{\phi_0}$ to these two subspaces, which we call $\Px_{\phi_0}$
and $\Py_{\phi_0}$.

Taking the real and imaginary components of \eref{eq:3}, we obtain the
following equations in $\real^n$:
\begin{equa}
&(t\dot \phi+\dot \theta )(\pphi+\xi)+\dot\eta=
(L-\phi)\xi+3(\pphi)^2\xi-C_\Gamma\eta+\OO_2~,\label{eq:xieta}\\
&(t\dot \phi+\dot \theta )\eta-(\dpphi\,\dot\phi-\dot\xi=
(L-\phi)\eta+(\pphi)^2\eta+C_\Gamma(\pphi+\xi)+\OO_2~,
\end{equa}
where $\OO_2$ refers to terms that are at least quadratic in $(\xi,\eta)$.

We next study what happens in the complement of {the two-dimensional zero eigenspace identified 
at the end of the previous section},
when one adds dissipation on the
coordinate $n$. In the standard basis, when $n=3$, the dissipation is
given, as before, by
\begin{equ}
C_\Gamma=  \begin{pmatrix}
  0&0&0\\
  0&0&0\\
    0&0&\gamma \epsilon 
  \end{pmatrix}~.
\end{equ}
In the full space, we have the $2n\times 2n$ matrix
\begin{equ}
  \Gamma=
  \begin{pmatrix}
    C_\Gamma&0\\0&C_\Gamma
  \end{pmatrix}~.
\end{equ}
We fix a $\phi(0)=\phi_0$ and we consider the projection
\begin{equ}
  \P=\P_{\phi_0} =\one -\braket{1}{\phi_0} -\braket{2}{\phi_0}~.
\end{equ}
This is the projection onto the { complement} of the space spanned by the
0 eigenvalue.

\emph{We will require that $\zeta=(\xi,\eta)$ remains in the
  range of $\P_{\phi_0}$.}  As time passes, the base point
$(\phi,\theta)$ will 
  change, and this will lead to secular growth in $\zeta$, an
  issue which we discuss in detail below.

We rearrange \eref{eq:xieta} as
\begin{equa}[eq:xieta2]
  \dot\xi&=
  \Px_{\phi_0} \biggl(\bigl(-(L-\phi) -(\pphi)^2\bigr) \eta \\&-C_\Gamma\xi
+\tdotphitheta
\eta-\dpphidotphi-C_\Gamma\pphi+\OO_2\biggr)~,\\
\dot\eta&=
\Py_{\phi_0}\left(\bigl((L-\phi) +3(\pphi)^2\bigr)\xi -C_\Gamma\eta
-\tdotphitheta (\pphi+\xi)+\OO_2\right )~.\\
\end{equa}
We will compute these projections in detail in \sref{sec:estimate}.

\section{Spectral properties of the linearization at $\gamma=0$}\label{sec:spec}

In this section, we consider the action of the matrix
$M_{\phi_0,\epsilon }$, when projected (with $\P_{\phi_0}$) onto the complement of the
subspace associated with the 0 eigenspace: 
\begin{equ}
  \P_{\phi_0} = \one - | \v1_{\phi_0}\rangle\,\langle \n1_{\phi_0} | - | \v2_{\phi_0}\rangle\,\langle
  \n2_{\phi_0} |~.
\end{equ}
We will see that this projection is very close to the projection onto
the complement of the 1st and $(n+1)$st component of the vectors in
$\real^{2n}$.  

We use perturbation theory, starting from the matrix $
M_{\phi_0=0,\epsilon=0}$.  We write the
formulas for  $n=4$.
The discussion starts with $\epsilon =\phi_0=0$. Then, we have the
quantities
\begin{equ}\label{eq:Ahat}
A_{0,0}=A_{\phi_0=0,\epsilon =0}
\begin{pmatrix}[c|ccc]
  0 &0 &0&0\\
  \hline
    0&  1 &0&0 \\
    0&0&1&0\\
    0&0&0&1
  \end{pmatrix}=
\begin{pmatrix}[c|c]
  0&\\
  \hline
  &\one
\end{pmatrix}
~,
\end{equ}
and
\begin{equ}\label{eq:Bhat}
  B_{0,0}=B_{\phi=0,\epsilon =0}=
  \begin{pmatrix}[c|rrr]
    2 &0 &0&0\\
    \hline
    0&  -1 &0&0 \\
    0&0&-1&0\\
    0&0&0&-1\
  \end{pmatrix}=
  \begin{pmatrix}[c|c]
    2&\\
    \hline
    &-\one\\
  \end{pmatrix}
  ~,
\end{equ}
which follow by substitution. We set
\begin{equ}\Label{eq:Mhat}
M_{0,0}\equiv  \begin{pmatrix}[cc]
  0&A_{0,0}\\ B_{0,0}&0
  \end{pmatrix}~,
\end{equ}
and study first the spectrum of $M_{0,0}$. The spectrum (and the
eigenspaces) of
$M_{\phi,\epsilon }$ will
then be shown to be close to that of $M_{0,0}$.

The eigenvalues  of $M_{0,0}$ are: A double 0, and $n-1$ pairs of eigenvalues
$\pm\I$.
When $n=4$, the corresponding eigenvectors are:
\begin{equa}
  e^{(1)} &= (0,0,0,0,1,0,0,0)^\T~,  \\
  e^{(3),(4)} &= (0,\pm\I,0,0,0,1,0,0)^\T~,\\
  e^{(5),(6)} &= (0,0,\pm\I,0,0,0,1,0)^\T~,\\
  e^{(7),(8)} &= (0,0,0,\pm\I,0,0,0,1)^\T~.
\end{equa}

Note that $e^{(2)}$ is missing, but the vector $e^{(2)}=(1,0,0,0,0,0,0,0)^\T$ is
mapped onto $2e^{(1)}$ and so $e^{(1)}$ and $e^{(2)}$ span the 0 eigenspace.

Since we have a symplectic problem (when
$\gamma=0$), we need to do the calculations in an appropriate basis. This
is inspired by the paper \cite{Mehrmann2008}. The coordinate
transformation is defined by the following matrix:
Let $s=1/\sqrt{2}$, and define (for the case $n=4$),
\begin{equ}\label{eq:x}
X=\begin{pmatrix}[cc|ccc|ccc]
  &1&&&&&&\\
  \hline
  &&s&&&s&&\\
  &&&s&&&s&\\
  &&&&s&&&s\\
    \hline
    1&&&&&&&\\
    \hline
  &&\I s&&&-\I s&&\\
    &&&\I s&&&-\I s&\\
  &&&&\I s&&&-\I s
\end{pmatrix}~.
\end{equ}

The columns are the normalized eigenvectors of our problem, for
$\epsilon =\phi=0$. Empty positions are ``0''s and the second vector is
mapped on the first (up to a factor of 2).
With our choice of $s$ we have $X^* X=1$, where $X^*$ is the Hermitian
conjugate of $X$.

\begin{definition}
If $Y$ is a matrix, we write its transform as $\XX{Y}=X^* Y X$.
\end{definition}

In the new basis, we get:
\begin{equ}\label{eq:xhatm}
 \XX{M_{0,0}} =-\I 
  \begin{pmatrix}[cc|ccc|ccc]
     0&-2\I&&&&&&\\
  0&0&&&&&&\\
    \hline
  &&1&&&&&\\
  &&&1&&&&\\
  &&&&1&&&\\
    \hline
  &&&&&-1&&\\
  &&&&&&-1&\\
  &&&&&&&-1\\
  \end{pmatrix}=
  -\I\begin{pmatrix}[cc|c|c]
       0&-2\I&&\\
  0&0&&\\
  \hline
  &&\one&\\
  &&&-\one
  \end{pmatrix}
  ~.
\end{equ}
Therefore, we have diagonalized $M_{0,0}$ up to its nilpotent block,
and we also see that the other parts of $\XX{ M_{0,0}}$ are imaginary,
which reflects the 
symplectic nature of the model.

We now turn to the case of $M_{\phi_0,\epsilon }$ which we view as a perturbation
of $M_{0,0}$ in the following way:
\begin{equ}
  M_{\phi_0,\epsilon}=M_{\phi_0,0} + E_1 +E_2~,
\end{equ}
where
\begin{equ}
   M_{{\phi_0},0}=\pmat{0& A_{{\phi_0},0}\\ B_{{\phi_0},0}&0}~,
  \end{equ}
  with
  \begin{equa}
    A_{{\phi_0},0} &= \text{diag}( 0,1_{\phi_0},\dots,1_{\phi_0})~,\\
    B_{{\phi_0}.0} &= \text{diag}( -2\cdot1_{\phi_0},-1_{\phi_0},\dots,-1_{\phi_0})~.
  \end{equa}
The matrix $E_1$ collects the terms in $M_{{\phi_0},\epsilon }$ which are
linear in $\epsilon $, while $E_2$ collects all higher order terms.
  The matrix $E_1$ is easily derived from \eref{eq:sol} and
  \eref{eq:Ahat}--(\ref{eq:Bhat})\footnote{The generalization to
    arbitrary $n$ is obtained by ``filling in'' more rows of the form
    $1,-2,1$ resp.~$-1,2,-1$, while retaining the first and last rows.}:
\begin{equ}\Label{eq:edef}
  E_1=  \epsilon \begin{pmatrix}[cccc|cccc]
   &&&&0&1 &&\\
   &&&&1 &-2&1&\\
   &&&&&1 &-2&1\\
   &&&&&&1 &-1\\
   \hline
-2 &-1 &&&&&&\\
-1 &2 &-1 &&&&&\\
&-1 &2 &-1 &&&&\\
&&-1 &1 &&&&
  \end{pmatrix}~.
\end{equ}
We now apply the coordinate transformation to $M_{{\phi_0},0}$ and $E_1$.
We first observe that
\begin{equ}\label{eq:mphi}
  \XX{M_{{\phi_0},0}} = (1+{\phi_0}) \XX{M_{0,0}}~.
\end{equ}
Applying the transformation to $E_1$,
one gets (using again $s=1/\sqrt{2}$):
\begin{equ}\label{eq:xex}
  \XX{ E_1 } = \I \epsilon  
  \begin{pmatrix}[cc|rrr|rrr]
    &2\I&\I s&&&\I s&&\\
    && s&&&- s&&\\
    \hline
    -\I s& s&-{\bf2}&1&&&&\\
    && 1&-2 &1&&&\\
    && &1 &-1&&&\\
    \hline
    -\I s&- s&&&&{\bf2}&-1&\\
        &&&&& -1&2 &-1\\
    && &&&&-1 &1
  \end{pmatrix}~.
\end{equ}

The reader should be aware that the seeming irregularities of the
matrix $\XX{E_1}$ are due to the differences between the expansions
of $p_1$ and the other $p_j$ in powers of $\epsilon $.
\\
We next split $\XX{E_1}=\XX{ E_{11}}+\XX{ E_{12}}$, where\footnote{We
  maintain the somewhat redundant notation $\XX{\cdot}$ so that the
  reader immediately sees on which space the object in question acts.}
\begin{equ}\label{eq:e11}
  \XX{E_{11}}= \I\epsilon  \begin{pmatrix}[cc|c|c]
    0&2\I&&\\
    0&0&&\\
    \hline
    &&S&\\
    \hline
    &&&-S
  \end{pmatrix}~,
\end{equ}
and the $\pm S$ are the tri-diagonal parts of \eref{eq:xex}.
It is important to observe that the $S$ is
tri-diagonal, symmetric, with non-zero off-diagonal elements. Clearly,
$\XX{E_{12}}$ contains only the  two top rows and the first two
columns of $\XX{E_1}$.
It is thus of the form
\begin{equ}\label{eq:e12}
\XX{E_{12}}=\I\epsilon   \begin{pmatrix}[cc|c|c]
  0& 0 &***&***\\
  0& 0 &***&***\\
  \hline
  *&*&&\\
  *&*&&\\
  *&*&&\\
  \hline
  *&*&&\\
  *&*&&\\
  *&*&&
  \end{pmatrix}~,
\end{equ}
where the ``*''  denote elements of at most $\OO_1$, cf.~\eref{eq:xex}.

All in all, we have decomposed
\begin{equ}\label{eq:decompose}
  \XX{M_{{\phi_0},\epsilon }}=(1+{\phi_0})\XX{M_{0,0}}+\XX{E_{11}} +\XX{E_{12}}+\XX{E_2}~.
\end{equ}

The term $\XX{E_2}$ contributes to second order, and it only remains
to understand the role of $\XX{E_{12}}$. 
Note now that $\XX{ E_{12}}$, which is of the form of \eref{eq:e12},
couples the 0 block
to $S$ and $-S$, but only to the first component of these
matrices.\footnote{This is a remnant of the nearest neighbor coupling
  of the model.} The following argument from classical perturbation
theory shows that this can only contribute to second order in $\epsilon
$ to the spectrum.

The first order shift of an eigenvalue close to $\I$ with eigenvector
$v$ is simply $\langle v |\XX{ E_{12}} v\rangle$. But, $v$ is of the form
\begin{equ}
  v= (0,0,v_1,v_2,v_3,0,0,0)^\T~,
\end{equ}
due to the form $\XX{ M_{{\phi_0},0} + E_{11}}$. Thus, $\XX{E_{12}} v$ is
of the form
\begin{equ}
 \XX{ E_{12}} v= (*,*,0,0,0,0,0,0)^\T~,
\end{equ}
where ``$*$'' denotes possibly non-zero elements. From the form of
$v$, this implies that
\begin{equ}\Label{eq:e12a}
\langle v | \XX{E_{12}} v\rangle=0~.  
\end{equ}
This means that $\XX{E_{12}}$ contributes only in second order in $\epsilon $ to
the eigenvalues.

\section{Complement of the zero eigenspace}

Recall that our goal is to write the solution of \eqref{eq:main} 
as $$w_j(t) = e^{\I \phi(t) t + \theta(t)} ( p_j(\phi(t)) + z_j(t))~,$$
and then follow the evolution of ${\phi}$, $\theta $, and $z =(\xi + \I\eta)$.
Since $\zeta(t)= (\xi(t), \eta(t))$ is constructed to lie in the subspace orthogonal
to the tangent space of the cylinder of breathers at $(\phi_0,\theta_0)$,
we construct  the projection $\P$ onto the tangent space at
$\phi_0$, $\theta_0$. We show that, somewhat surprisingly,  with the exception of the
zero eigenspace, all other eigenvalues of the linearized matrix are simple, lie
on the imaginary axis, and are separated by a distance of at least $C\epsilon$ from
the remainder of the spectrum.
It is convenient to work directly with the transformation $\XX{\cdot}$ of
\eref{eq:x}.  

\begin{theorem}\label{thm:imag}
  The operator $\XX{\P M_{{\phi_0},\epsilon }}$ has two eigenvalues which
  are within $\OO(\epsilon )$ of 0, and $(n-1)$ purely imaginary eigenvalues close to
  $\I$, separated by $C\epsilon$, with $C>0$. The corresponding
  eigenvectors (in the $\XX{\cdot}$ basis)
  are orthogonal. Furthermore, these eigenvalues
  have non-vanishing last component (\ie the components $2+(n-1)$ and
  $2+2(n-1)$ in the $\XX{\cdot}$ representation). Analogous statements hold for the
  $n-1$ eigenvalues near $-\I$.
 
\end{theorem}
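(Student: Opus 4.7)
The plan is to treat $\XX{\P M_{\phi_0,\epsilon}}$ as a perturbation of $\XX{M_{0,0}}$ via the decomposition \eqref{eq:decompose}, exploiting the fact that the three spectral clusters of $\XX{M_{0,0}}$ shown in \eqref{eq:xhatm} (the $2\times 2$ nilpotent block, and the two $(n-1)$-dimensional blocks at $\pm\I$) are separated by a distance at least $1$. Standard spectral-separation arguments (of the Kato/Sz.-Nagy type) imply that for $\epsilon,\phi_0$ small enough these clusters persist, so $\XX{\P M_{\phi_0,\epsilon}}$ decomposes into two invariant subspaces $V_-$ and $V_+$ of dimension $n-1$, one near $-\I$ and the other near $+\I$, along with the kernel provided by $\P$.

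First I would compute the restriction of the perturbation to $V_-$: to leading order this is the $V_-$-block of $(1+\phi_0)\XX{M_{0,0}}+\XX{E_{11}}$, namely $-\I(1+\phi_0)\one+\I\epsilon S$, where $S$ is the tridiagonal symmetric matrix appearing in \eqref{eq:e11}. The contribution of $\XX{E_{12}}$ is $\OO(\epsilon^2)$ by the argument already given in \eqref{eq:e12a}, and $\XX{E_2}$ is $\OO(\epsilon^2)$ by construction, while the block-to-block coupling from $\XX{E_{11}}$ between $V_-$ and $V_+$ produces only $\OO(\epsilon^2)$ corrections since these blocks are separated by distance $2$. So the $V_-$ eigenvalues are $-\I\bigl((1+\phi_0)-\epsilon\lambda_k\bigr)+\OO(\epsilon^2)$, and $V_+$ eigenvalues are the conjugate picture with $-S$ in place of $S$.

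The next step is to invoke classical Jacobi-matrix facts for $S$: because the off-diagonal entries of $S$ are all nonzero, $S$ has $n-1$ simple real eigenvalues $\lambda_1<\cdots<\lambda_{n-1}$ with a strictly positive spectral gap $d=\min_k(\lambda_{k+1}-\lambda_k)$, the corresponding eigenvectors are mutually orthogonal, and every eigenvector has non-vanishing first and last component (otherwise the three-term recurrence would force the whole vector to vanish). Choosing $\epsilon$ small enough that the $\OO(\epsilon^2)$ corrections are dominated by $\epsilon d$ yields the required $C\epsilon$-separation and simplicity. Transporting each $S$-eigenvector back into the full $2n$-dimensional space places it in the $V_-$-slot (coordinates $3,\dots,n+1$ of the $\XX{\cdot}$ representation), and the last component in that slot is coordinate $n+1=2+(n-1)$, which is non-zero; an identical statement at coordinate $2+2(n-1)=2n$ holds for $V_+$.

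To obtain the eigenvalues exactly on the imaginary axis (not merely close to it), I would use the Hamiltonian structure at $\gamma=0$: since $A_{\phi_0,\epsilon}$ and $B_{\phi_0,\epsilon}$ are real symmetric and $A_{\phi_0,\epsilon}$ restricted to the subspace complementary to the zero direction is a small perturbation of the positive identity, the spectrum of $M$ on that subspace consists of the numbers $\pm\sqrt{\mu}$ where the $\mu$ are eigenvalues of $AB$, and $AB$ is similar (through $A^{1/2}$) to the symmetric matrix $A^{1/2}BA^{1/2}$, whose eigenvalues are real and close to $-1$. Hence all non-zero eigenvalues are purely imaginary. The hardest part of the argument is the bookkeeping that the projection $\P_{\phi_0}$ (built from the exact $0$-eigenspace of $M_{\phi_0,\epsilon}$) aligns appropriately with the coordinate transform $X$ (built at $\phi_0=\epsilon=0$), and, relatedly, controlling the $\OO(\epsilon^2)$ off-diagonal couplings so that the first-order Jacobi picture survives both the restriction to $V_\pm$ and the perturbative resolution of the degeneracies in $\XX{M_{0,0}}$.
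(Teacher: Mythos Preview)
Your proposal is essentially correct and follows the same skeleton as the paper: decompose via \eqref{eq:decompose}, reduce to the tridiagonal matrix $S$ on each $(n-1)$-block, invoke the Jacobi-matrix facts (the paper's Proposition~\ref{prop:spectrum}) for simplicity, $\epsilon$-separation, and non-vanishing endpoint components, and then argue that $\XX{E_{12}}$ and $\XX{E_2}$ only contribute at order $\epsilon^2$.

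There are two genuine differences worth noting. First, where you invoke abstract Kato/Sz.-Nagy spectral separation to produce the invariant subspaces $V_\pm$ and then restrict, the paper instead decomposes the projection itself as $\XX{\P}=\XX{\P_0}+\XX{\P_1}+\XX{\P_2}$ (with $\XX{\P_0}$ the crude coordinate projection onto the last $2(n-1)$ slots) and tracks all twelve products $\XX{\P_i}\XX{E_j}$ explicitly; this is more laborious but makes the ``bookkeeping'' you flag as the hardest part completely concrete, showing in particular that the cross-terms $\XX{\P_2 M_{0,0}}$ and $\XX{\P_0 E_{12}}$ have the block-off-diagonal shape needed for the $\langle x|Vx\rangle=0$ trick. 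Second, your argument for exact pure-imaginarity via the similarity of $AB$ to the symmetric matrix $A^{1/2}BA^{1/2}$ (using that $A$ is positive on the complementary subspace) is more direct than what the paper does: the paper establishes pure imaginarity only at the level of the leading block $-\I(1+\phi_0)\one+\I\epsilon S$ (where it is obvious since $S$ is real symmetric) and then appeals parenthetically to ``the Hamiltonian nature of the problem'' for the full statement. Your route is cleaner here. One small correction: $\XX{E_{11}}$ is already block-diagonal in the $V_-/V_+$ splitting (see \eqref{eq:e11}), so there is no $V_-$--$V_+$ coupling from it to worry about.
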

\begin{proof}
The remainder of this section is devoted to the proof of this theorem.

A calculation  (using our formulas for $\v{j}$, $\n{j}$)
shows that
\begin{equ}\label{eq:decomposep}
  \XX{\P}=\XX{\P_0} +\XX{\P_1}+\XX{\P_2}~,
\end{equ}
where
\begin{equ}
  \XX{\P_0}=\begin{pmatrix}[cc|ccc|ccc]
  0&0&&&&&&\\
  0&0&&&&&&\\
  \hline
  &&1&&&&&\\
  &&&1&&&&\\
  &&&&1&&&\\
    \hline
  &&&&&1&&\\
  &&&&&&1&\\
  &&&&&&&1\\
  \end{pmatrix}~,\qquad \XX{\P_1}=
  \begin{pmatrix}[cc|c|c]
    0&0&&\\
    0&0&&\\
    \hline
  && P_1&\\
\hline
&&& P_2
\end{pmatrix}~,
\end{equ}
where the orders of the elements of $P_j$ are
\begin{equ}\Label{eq:p1}
   P_j =
  \begin{pmatrix}
 \epsilon^2 &\epsilon ^3 &\epsilon ^4\\
  \epsilon^3 &\epsilon ^4 &\epsilon ^5\\
 \epsilon^4 &\epsilon ^5 &\epsilon ^6\\
  \end{pmatrix}~.
\end{equ}
Furthermore the $P_j$ are symmetric.
Note that $ (P_j)_{i,k} =\OO(\epsilon ^{i+k})$.
Finally, showing orders only,
\begin{equ}
  \XX{\P_2}=
  \begin{pmatrix}[cc|ccc|ccc]
    \epsilon +{\phi_0}&0&\epsilon &0&0&\epsilon &0&0\\
    0&\epsilon +{\phi_0}&\epsilon &0&0&\epsilon &0&0\\
    \hline
    \epsilon &\epsilon &&&&&\\
    0 &0 &&&&&\\
    0 &0 &&&&&\\
    \hline
        \epsilon &\epsilon &&&&&\\
    0 &0 &&&&&\\
    0 &0 &&&&&\\
  \end{pmatrix}+\OO_2~.
\end{equ}
The omitted terms are similar to those in $P_j$ (again a symmetric matrix).
Therefore, the eigenvectors are orthogonal. (This actually follows
also from the Hamiltonian nature of the problem, but we need more
information to control the $\gamma $-dependence.)

\begin{remark}
  Clearly, $\XX{\P_0}$ is the projection on the eigenspace spanned by $\pm\I$,
  when $\epsilon =0$. The part $\XX{\P_1}$ contains the couplings within
  the subspace of the eigenvalues $\pm\I$, while $\XX{\P_2}$ describes the
  coupling between the zero-eigenspace of dimension 2 and its complement.
\end{remark}

{}From \sref{sec:spec} we also have the decomposition \eref{eq:decompose}:
\begin{equ}\Label{eq:decomposem}
  \XX{M_{{\phi_0},\epsilon }}=(1+{\phi_0})\XX{M_{0,0}}+\XX{E_{11}} +\XX{E_{12}}+\XX{E_2}~.
\end{equ}
Combining with \eref{eq:decomposep}, we see that
\begin{equ}
  \XX{\P M_{{\phi_0},\epsilon }}=\XX{\P}\XX{M_{{\phi_0},\epsilon }}
 \end{equ}
leads to 12 terms, many of which are of second order in $\epsilon $
and ${\phi_0}$. We start with
the dominant ones.

Since $\XX{\P_0}$ is just the projection onto the  complement of the
first 2 eigendirections, we get from \eref{eq:xhatm},
\begin{equa}
\ &  \XX{\P_0 M_{\phi_0,0}}=\XX{\P_0}\XX{M_{\phi_0,0}}\\
\ &  = \I 1_\phi\begin{pmatrix}[cc|c|c]
  0&0&&\\
  0&0&&\\
  \hline
  &&\one&\\
  \hline
  &&&\one\\
 \end{pmatrix} \begin{pmatrix}[cc|c|c]
     0&-2\I&&\\
  0&0&&\\
    \hline
    &&\one&\\
    \hline
  &&&-\one\\
\end{pmatrix}
=-\I 1_\phi \begin{pmatrix}[cc|c|c]
     0&0&&\\
  0&0&&\\
    \hline
    &&\one&\\
    \hline
  &&&-\one
  \end{pmatrix}~,
\end{equa}
where $1_\phi\equiv1+\phi$.
This is clearly the leading constant term.

The next term is at the origin of the $\epsilon $-splitting of
\tref{thm:imag}.
Using
\eref{eq:e11}, we get
\begin{equa}
\ &  \XX{\P_0 E_{11}}=\XX{\P_0} \XX{E_{11}}\\
\ &=\I\epsilon \begin{pmatrix}[cc|c|c]
  0&0&&\\
  0&0&&\\
  \hline
  &&\one&\\
  \hline
  &&&\one\\
\end{pmatrix}\begin{pmatrix}[cc|c|c]
    0&2\I&&\\
    0&0&&\\
    \hline
    &&S&\\
    \hline
    &&&-S
  \end{pmatrix}=\I\epsilon \begin{pmatrix}[cc|c|c]
    0&0&&\\
    0&0&&\\
    \hline
    &&S&\\
    \hline
    &&&-S
  \end{pmatrix}~.
\end{equa}
Thus, to leading order, we find
\begin{equ}\label{eq:blocks}
  \XX{\P_0 (M_{\phi_0,0}+E_{11})}=
  -\I 1_\phi \begin{pmatrix}[cc|c|c]
     0&0&&\\
  0&0&&\\
    \hline
    &&\one&\\
    \hline
  &&&-\one\\
  \end{pmatrix}+\I\epsilon \begin{pmatrix}[cc|c|c]
    0&0&&\\
    0&0&&\\
    \hline
    &&S&\\
    \hline
    &&&-S
  \end{pmatrix}~.
\end{equ}

We now use 
\begin{proposition}\label{prop:spectrum}
  Consider a tri-diagonal matrix $U$ with $U_{i,i+1}=U_{i,i-1}\ne0$
  for  all $i$ and
arbitrary
  elements in the diagonal.
  Then
  \begin{myenum}
    \item All eigenfunctions of $U$ have their first and last
      components non-zero
    \item All eigenvalues of $U$ are simple.
  \end{myenum}
\end{proposition}

Postponing the proof, we conclude, by applying the proposition to
$S$ and $-S$ separately, that:
\emph{The matrix $\XX{\P_0 E_{11}}$ has a double eigenvalue 0, and $2(n-1)$ simple,
  purely imaginary,
  eigenvalues $\pm\lambda _1,\dots,\pm\lambda _{n-1}$ which are
  different from 0.}
Because $S$ is symmetric, it follows from the proposition that
$\XX{\P_0(M_{\phi_0,0}+E_{11})}$ has purely imaginary spectrum, with two
eigenvalues equal to 0, and $n-1$ simple eigenvalues near $\I
(1+\phi)$ and another $n-1$ near $-\I (1+\phi)$. Furthermore, since
$E_{11}$ is proportional to $\epsilon $, and $S$ has simple
eigenvalues separated by $\OO(1)$, we conclude
\begin{corollary}\label{cor:split}
The
eigenvalues of $\XX{\P_0 (M_{\phi_0,0}+E_{11})}$ are purely imaginary and satisfy $|\mu_i-\mu_j|>C'_n \epsilon $ when $i\ne j$. The
constant $C'_n>0$ only depends on $n$.
The eigenfunctions, in the $\XX{\cdot}$ basis have non-zero component
at position $2+n-1$ and $2n$.
\end{corollary}

We will now show that the remaining terms of $\XX{\P M_{\phi,\epsilon
}}$ only give rise to corrections of order $\OO_2$, both in the
spectrum and in the eigendirections (on the subspace spanned by
$X^*(\xi,\eta)^\T$). For the terms which are of order $\OO_2$, there
is nothing to prove, since they perturb a matrix whose spectrum is
separated by $\OO(\epsilon )$.

But there are terms of order $\OO(\epsilon )$. They are of the form
$\XX{\P_2 M_{0,0}}$ or $\XX{\P_0 E_{12}}$. Here, the special form of the
  matrices comes into play,  and the following lemma formulates the
  key point:

\begin{lemma}
  Let $U$, $V$ be $(r+s) \times (r+s)$ matrices of the form 
  \begin{equ}
    U=
    \begin{pmatrix}[c|c]
      U_1&\\
      \hline
      &U_2
    \end{pmatrix}~,\quad
    V=
    \begin{pmatrix}[c|c]
      &V_1\\
      \hline
      V_2&
      \end{pmatrix}~,
  \end{equ}
where $U_1$ is an $r \times r$ square matrix, $U_2$ an $s \times s$
square matrix and $V_1$ and $V_2$ are
  $s \times r$ and $r \times s$ matrices, respectively.
  Let $x=(0,x_2)^\T$ be an eigenvector of $U$ (here $x_2 \in \real^s$). Then
  \begin{equ}\label{eq:xvxnull}
    \langle x | V x\rangle=0~.
  \end{equ}
\end{lemma}
\begin{proof}
  Obvious.
\end{proof}

We apply this lemma to the  two matrices { $\XX{\P_2 M_{0,0} }$ and
$\XX{\P_0 E_{12}}$,  }which play the role of $V$ in the lemma. From
\pref{prop:spectrum} we conclude that the eigenvectors of
\eref{eq:blocks} are of the form
\begin{equ}
  x_1=(0,0,*,\dots,*,0,\dots,0)^\T \text{  or }
x_2=(0,0,0,\dots,0,*,\dots,*)^\T~.
\end{equ}
Therefore \eref{eq:xvxnull} applies in this case, and thus the first
order contributions of $\XX{\P_2}$ resp.~$\XX{E_{12}}$ vanish. Thus,
as the spectra are  $\epsilon$-separated and simple by
Corollary~\ref{cor:split}, we see that for small enough $|\epsilon |$,
the spectrum maintains the splitting properties when the second order
perturbations (in $\epsilon $) are added. Note that, since
$\XX{M_{\phi_0,0}}=(1+\phi_0)\XX{M_{0,0}}$ by \eref{eq:mphi} and as
$\XX{M_{0,0}}$ has the form \eref{eq:xhatm}, the effect of $\phi_0$ is
to just shift the spectrum globally, without changing the spacing of
the eigenvalues within a block.
This completes the proof of  \tref{thm:imag}.
\end{proof}

We end the section with the
\begin{proof}[Proof of \pref{prop:spectrum}]
  Suppose $x=(x_1,\dots,x_n)^\T$ is an eigenfunction with eigenvalue
  $\lambda $. Then, from the form of $U$, we have
  \begin{equa}
    (U_{11}x_1+U_{12}x_2)&=\lambda x_1~, \text{ so that }
    x_2=\frac{1}{U_{12}}(\lambda -U_{11})x_1~,\\
    (U_{21}x_1+U_{22}x_2+U_{23}x_3)&=\lambda x_2~, \text{ so that }\\
    x_3&=\frac{1}{U_{23}} \bigl((\lambda -U_{22})x_2-U_{21}x_1 \bigr)\\
    &=\frac{1}{U_{23}}\left(
    (\lambda -U_{22})=\frac{1}{U_{12}}(\lambda -U_{11})x_1-U_{21}
    \right)x_1~.
  \end{equa}
  Continuing in this way, we can write $x_j=C_j x_1$ for some
  constant $C_j$ defined in terms of the $U_{ij}$ and $\lambda $. But
  then, if $x_1=0$ all other $x_j$ are 0, and we have not found a
  non-trivial eigenfunction. The same argument rules out the case
  $x_n=0$. Note that other $x_j$ can vanish.

  The proof of 2 is by the same argument: If we normalize, say,
  $x_1=1$, the inductive steps above show that the eigenfunctions are
  uniquely determined by the eigenvalues. Hence, since there is a
  complete set of eigenvectors, all eigenvalues are simple.
\end{proof}

\section{The effect of the dissipation on the semigroup}\label{sec:semigroup}

In order to control the evolution of $\zeta(t)$, we need to understand
precisely the bounds on the evolution of the semigroup generated by the 
linear part of the equation.  We find that all modes in the subspace orthogonal
to the zero eigendirections are uniformly contracted with a rate proportional
to $\gamma \epsilon$.  This is somewhat surprising due to the localized nature
of the dissipative term in the equation.  However, if follows from the
facts we have demonstrated above.  Namely, 
 we have shown in \tref{thm:imag} that the eigenvectors in the
$\XX{\cdot}$ representation have nonzero last component, and are
isolated, { and so we conclude from standard perturbation theory that, adding
dissipation $\Gamma$
moves these eigenvalues into the left half plane, by an amount
proportional to $\gamma\epsilon $ (up to higher order terms).  We now check
that the coefficients of the term proportional to $\gamma \epsilon$ are all non-zero (and depend only on $n$). }

Let
\begin{equ}\label{eq:ll}
{\LL_{\phi_0,\gamma }}={M_{\phi_0,\epsilon }} -{\Gamma}~,\quad\XX{\LL_{\phi_0,\gamma }}=\XX{M_{\phi_0,\epsilon }} -\XX{\Gamma}~.
\end{equ}
An explicit calculation shows that
\begin{equ}\label{eq:xxgamma}
  \XX{\Gamma}=
  \begin{pmatrix}[cc|ccc|ccc]
    0&0&&&&&&\\
    0&0&&&&&&\\
    \hline
    &&0&0&0&&&\\
    &&0&0&0&&&\\
    &&0&0&\gamma\epsilon &&&\\
    \hline
    &&&&&0&0&0\\
    &&&&&0&0&0\\
        &&&&&0&0&\gamma \epsilon 
  \end{pmatrix}~.
\end{equ}
\begin{proposition}\label{prop:semigroup}
  There is a constant $\gamma _0>0$, depending only on $n$ such that
  for $\gamma \in [0,\gamma _0]$ 
  one has the bound
  \begin{equ}\label{eq:semigroup}
    \left\|e^{t\LL_{\phi_0,\gamma}}X^*\zeta \right\|\le (1+C_n\gamma )
    e^{-\kappa_n\gamma \epsilon t}\| X^* \zeta\|~,
  \end{equ}
  for some $\kappa_n>0$ and $C_n>0$, depending only on $n$.
  \end{proposition}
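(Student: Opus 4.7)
The plan is to apply analytic perturbation theory to $\LL_{\phi_0,\gamma}=M_{\phi_0,\epsilon}-\Gamma$, using the spectral picture of \tref{thm:imag} as the unperturbed operator and treating $-\Gamma$ as a perturbation of size $\gamma\epsilon$. Since $\zeta$ is taken in the range of $\P_{\phi_0}$, I work on the $(2n-2)$-dimensional subspace where $\XX{\P M_{\phi_0,\epsilon}}$ has $(2n-2)$ simple, purely imaginary eigenvalues $\mu_k$ separated pairwise by at least $C'_n\epsilon$.

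Choosing $\gamma_0$ so small (independently of $\epsilon$) that $\gamma_0<C'_n/4$ places $\|\XX{\Gamma}\|=\gamma\epsilon$ well inside every resolvent gap, so standard first-order perturbation theory gives perturbed eigenvalues
$$
\mu_k(\gamma)=\mu_k-\frac{\langle w_k,\Gamma v_k\rangle}{\langle w_k,v_k\rangle}+\OO(\gamma^2\epsilon),
$$
with $v_k$ and $w_k$ the right and left eigenvectors of $\P M_{\phi_0,\epsilon}$ for $\mu_k$. The crux is to show $\mathrm{Re}\,\mu_k(\gamma)\le -\kappa_n\gamma\epsilon$ uniformly in $k$, with $\kappa_n>0$ depending only on $n$. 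Since $\Gamma$ is real-symmetric and positive semi-definite, $\langle v_k,\Gamma v_k\rangle\ge 0$. The matrix $\Gamma$ acts only on coordinates $n$ and $2n$ in the original basis, which correspond through the unitary $X$ of \eref{eq:x} to the $\XX{\cdot}$-coordinates at positions $2+(n-1)$ and $2n$; and \tref{thm:imag} guarantees that each eigenvector of $\XX{\P M_{\phi_0,\epsilon}}$ has non-vanishing components at exactly those positions. To promote non-vanishing into a quantitative lower bound I would exploit the $\epsilon=\phi_0=0$ limit, where the eigenvectors reduce to those of the fixed symmetric tri-diagonal matrix $\pm S$ of \eref{eq:e11}, whose last entries are explicit constants depending only on $n$. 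The $\epsilon$-spectral gap ensures continuity of the eigenvectors, so the bound $\langle v_k,\Gamma v_k\rangle\ge 2\kappa_n\gamma\epsilon\|v_k\|^2$ extends to small $(\phi_0,\epsilon)$. Combined with $\langle w_k,v_k\rangle=1+\OO(\epsilon,\phi_0)$, this gives the claimed contraction rate after absorbing the $\OO(\gamma^2\epsilon)$ term into $\kappa_n$.

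To turn the spectral data into the semigroup bound I use the right-eigenvector matrix $V(\gamma)$ of $\XX{\P\LL_{\phi_0,\gamma}}$. By \tref{thm:imag} the eigenvectors of $\XX{\P M_{\phi_0,\epsilon}}$ are orthogonal, so $V(0)$ is unitary. Since the perturbation $\XX{\Gamma}$ has size $\gamma\epsilon$ and the spectral gap is of order $\epsilon$, Kato's analytic perturbation theory gives $V(\gamma)=V(0)+\OO(\gamma)$, hence $\|V(\gamma)\|\,\|V(\gamma)^{-1}\|\le 1+C_n\gamma$. Diagonalizing $\XX{\P\LL_{\phi_0,\gamma}}=V(\gamma)D(\gamma)V(\gamma)^{-1}$ with $\mathrm{Re}\,D(\gamma)\le -\kappa_n\gamma\epsilon\,\one$ then yields \eref{eq:semigroup}.

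The main obstacle is the uniform-in-$\epsilon$ lower bound $\langle v_k,\Gamma v_k\rangle\gtrsim\gamma\epsilon\|v_k\|^2$: \tref{thm:imag} only records non-vanishing of the appropriate components, but to obtain $\kappa_n>0$ depending on $n$ alone, an explicit quantitative estimate is needed. This is tractable because in the limit $\epsilon=\phi_0=0$ the problem collapses to computing the last entries of the eigenvectors of the fixed $(n-1)$-dimensional symmetric tri-diagonal matrix $S$ of \eref{eq:e11}, which are concrete non-zero numbers; perturbation theory then transports the lower bound to small parameters with constants that depend only on $n$.
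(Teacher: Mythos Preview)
Your proposal is correct and follows essentially the same route as the paper: first-order perturbation theory for the simple, $C\epsilon$-separated eigenvalues of \tref{thm:imag} to show each shifts by $-c\gamma\epsilon$ into the left half-plane (using that the eigenvectors have non-vanishing last component), followed by a bound on the eigenvector/projection perturbation of size $\OO(\gamma\epsilon/\epsilon)=\OO(\gamma)$ to produce the $(1+C_n\gamma)$ prefactor. The paper phrases the second step via Kato's contour-integral formula for the spectral projections rather than via the condition number of the eigenvector matrix, but this is the same mechanism; your discussion of the quantitative lower bound on $\langle v_k,\Gamma v_k\rangle$ via the $\epsilon$-independent eigenvectors of $S$ is in fact more explicit than the paper's treatment of that point.
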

\begin{proof}
  This result follows from the classical perturbation theory for
  eigenvalues and eigenfunctions.
  By \tref{thm:imag} we know that for $\gamma =0$ the purely
  imaginary eigenvalues are simple, and pairwise separated by
  $C\epsilon $, with $C>0$ depending only on $n$. 
  { We focus on the eigenvalues close to $+\I$ --- those near
  $-\I$ are handled by an entirely analogous procedure.  From
  \pref{prop:spectrum} we see that the eigenvectors $v_j$,
  $j=1,\dots,n-1$
  corresponding to
  these eigenvalues have a non-vanishing last component, and therefore
  there exists some $C'>0$, depending only on $n$, such that 
  $\langle v_j , \Gamma v_j\rangle>C' \gamma \epsilon $,  Therefore, up to
  higher order terms, standard perturbation theory for simple
  eigenvalues tells us that the spectrum of $\XX{\LL_{\phi,\gamma
  }}$ has (twice) $n-1$ eigenvalues in the negative halfplane at a
  distance of $\OO(\gamma \epsilon )$ from the imaginary axis.}

  We next show that the eigendirections make an angle of at most
  $\OO(\gamma )$ from the orthogonality of the eigendirections of the
  symmetric matrix $\XX{M_{\phi,\epsilon }}$, thus proving the bound
  \eref{eq:semigroup}. From perturbation theory (see \eg
  \cite{Kato1984}[I.$\mathsection$5.3]), the projection onto one of these
  eigenspaces is given by
  \begin{equ}
    P_j =-\frac{1}{2\pi \I}\int _{C_j} R(z) \d z~,
  \end{equ}
  where the contour $C_j$ is a circle of radius $\OO(\epsilon )$
  around the eigenvalue of the problem for $\gamma =0$ and $R$ is the
  resolvent.
   {
  The perturbed
  eigenvalue (which moves a distance $\sim -K_j \gamma\epsilon $ ) lies inside this circle,
  if $\gamma<\gamma_0 $ is sufficiently small, where $\gamma _0$
  depends only on $n$ but not on $\epsilon $, so long as $|\epsilon | < \epsilon_0$, for some fixed $\epsilon_0$. Therefore,
  $\norm{P_j}<1+\OO(\gamma )$, since the contour integral over the circle
  leads to a bound $1/\epsilon $ which cancels the factor $\epsilon $
  in $\gamma \epsilon $. }
\end{proof}
{
Note that since the change of variables matrix $X$ is orthogonal, these decay estimates also hold in the original coordinates, \ie 
\begin{corollary}\label{cor:semi}
  There is a constant $C_n$ such that 
  \begin{equ}\label{eq:semigroup_orig}
    \left\|e^{t\LL_{\phi_0,\gamma}} \zeta \right\|\le (1+C_n\gamma )
    e^{-\kappa_n\gamma \epsilon t}\|  \zeta\|~.
  \end{equ}
\end{corollary}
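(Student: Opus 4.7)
The plan is to recognize that Corollary \ref{cor:semi} is a purely notational consequence of Proposition \ref{prop:semigroup}, exploiting the unitarity of $X$. From the definition \eref{eq:x} and the remark immediately following it that $X^*X = I$, together with the fact that $X$ is a finite-dimensional square matrix, one also has $XX^* = I$; hence $X$ is unitary on $\complex^{2n}$ and both $X$ and $X^*$ are isometries: $\|X\eta\| = \|X^*\eta\| = \|\eta\|$ for every $\eta$.

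Given this, I would carry out the following single step. For an arbitrary $\zeta \in \real^{2n}$, apply Proposition \ref{prop:semigroup} with its vector argument replaced by $X\zeta$. The left-hand side of \eref{eq:semigroup} then reads
$$
\|e^{t\LL_{\phi_0,\gamma}} X^*(X\zeta)\| = \|e^{t\LL_{\phi_0,\gamma}}\zeta\|,
$$
using $X^*X = I$, and the right-hand side becomes $(1+C_n\gamma)\,e^{-\kappa_n\gamma\epsilon t}\|X^*X\zeta\| = (1+C_n\gamma)\,e^{-\kappa_n\gamma\epsilon t}\|\zeta\|$ for the same reason. This is exactly \eref{eq:semigroup_orig}.

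Equivalently, and perhaps more conceptually, one can expand the exponential as a power series and insert $X X^* = I$ between factors to obtain the similarity identity $e^{t\LL_{\phi_0,\gamma}} = X\, e^{t\XX{\LL_{\phi_0,\gamma}}}\, X^*$. Flanking by the isometries $X$ and $X^*$ leaves the norm of any image vector unchanged, so any bound on the semigroup generated by $\XX{\LL_{\phi_0,\gamma}}$ transfers verbatim to the semigroup generated by $\LL_{\phi_0,\gamma}$ in the original coordinates, with the same constants $C_n$ and $\kappa_n$.

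There is no substantive obstacle. All the spectral difficulty — isolation and imaginary location of the unperturbed eigenvalues, contractivity of the $\gamma$-perturbation, non-vanishing of the dissipation coefficients along eigendirections — was already handled in \tref{thm:imag} and Proposition \ref{prop:semigroup}. Corollary \ref{cor:semi} merely records that the contraction rate $\kappa_n\gamma\epsilon$ is intrinsic to $\LL_{\phi_0,\gamma}$ and not an artifact of the symplectic basis $X$, which is the form needed in the subsequent sections where $\zeta$ lives in physical phase space.
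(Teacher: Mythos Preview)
Your proposal is correct and matches the paper's approach exactly: the paper simply remarks that since the change-of-variables matrix $X$ is orthogonal (unitary), the decay estimate of Proposition~\ref{prop:semigroup} transfers verbatim to the original coordinates. Your substitution $\zeta \mapsto X\zeta$ together with $X^*X=I$ is precisely the one-line justification behind that remark.
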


}

\section{Projecting onto the complement of the 0 eigenspace}\label{sec:estimate}

In this section, we reexamine equations \eqref{eq:3}-\eqref{eq:xieta2} to derive 
carefully, and explicitly, the equations for the evolution of the variables 
$\phi$, $\theta$, and $\zeta$.   In particular, we look at the constraints on
these equations imposed by the requirement that $\zeta$
remains in the range of $\P=\P_{\phi_0}$. As $\phi$ changes with time,
the projection will also generate terms involving $\phi(t)-\phi_0$.
We will bound these terms carefully, since they lead to secular growth
in $\zeta$.

As we will often have to compare $p(\phi(t))$ to $p(\phi_0)$,
it is useful to bound this difference as $\OO(\delta )$ with
\begin{equ}
  \delta =\phi(t)-\phi_0~.
\end{equ}
We will only be interested in small $\delta $.

We  fix a $\phi_0$ small enough for \tref{thm:imag} to apply.
We next analyze the terms on the r.h.s~of \eref{eq:xieta2}, one by one,
using that $\zeta$ is orthogonal to the $\n{j}_{\phi_0}$.
\begin{lemma}\label{lem:U}
  Consider the linear evolution operator
  \begin{equ}
    U=\pmat{0&-\bigl((L-\phi)
+(\pphi)^2\bigr)\\
(L-\phi) +3(\pphi)^2&0
}~.
  \end{equ}
  Then,
  \begin{equa}
\langle \n2_{\phi_0} |  U \zeta\rangle&=\OO(\delta)\|\zeta\|~,\label{eq:termU2}\\
\langle \n1_{\phi_0} |  U
\zeta\rangle&=\OO(\delta)\|\zeta\|~,\label{eq:termU1}\\
\P_{\phi_0} U\zeta &= U\zeta + \OO(\delta)\|\zeta\|~.\label{eq:termuP} 
  \end{equa}
\end{lemma}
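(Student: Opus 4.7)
\medskip

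The plan is to exploit the identification of $U$ at $\phi=\phi_0$ with the matrix $M_{\phi_0,\epsilon}$ whose adjoint Jordan structure on the zero eigenspace is already known. First I would observe that at $\phi=\phi_0$ the operator $U$ coincides with $M_{\phi_0,\epsilon}$: indeed, with $L=-\epsilon\Delta-\one$ the block entries $-(L-\phi_0+(\pphizero)^2)$ and $L-\phi_0+3(\pphizero)^2$ match the definitions of $A_{\phi_0,\epsilon}$ and $B_{\phi_0,\epsilon}$ in \eref{eq:aphi}--\eref{eq:bphi} (the latter identity is stated explicitly in the setup of the lemma preceding \eref{eq:v}). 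Then I would write
\begin{equ}
U=M_{\phi_0,\epsilon}+\bigl(U-M_{\phi_0,\epsilon}\bigr),
\end{equ}
and note that the difference $U-M_{\phi_0,\epsilon}$ involves only $\phi-\phi_0=\delta$ and $(\pphi)^2-(\pphizero)^2$, both of size $\OO(\delta)$ by smooth dependence of $p(\phi)$ on $\phi$ (Theorem~\ref{th:fixed2}).

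For the bounds \eref{eq:termU2} and \eref{eq:termU1}, I would move the adjoint across the inner product: from the Jordan structure established in \sref{sec:spec} one has $M^*_{\phi_0,\epsilon}\n2_{\phi_0}=0$ and $M^*_{\phi_0,\epsilon}\n1_{\phi_0}=\n2_{\phi_0}$ (up to the normalization \eref{eq:n1v1}). Hence
\begin{equa}
\langle \n2_{\phi_0}\mid M_{\phi_0,\epsilon}\zeta\rangle &= \langle M^*_{\phi_0,\epsilon}\n2_{\phi_0}\mid\zeta\rangle=0,\\
\langle \n1_{\phi_0}\mid M_{\phi_0,\epsilon}\zeta\rangle &= \langle \n2_{\phi_0}\mid\zeta\rangle=0,
\end{equa}
where the second line vanishes because $\zeta\in\mathrm{range}(\P_{\phi_0})$. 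The remaining contribution $\langle \n{j}_{\phi_0}\mid (U-M_{\phi_0,\epsilon})\zeta\rangle$ is bounded by $\|\n{j}_{\phi_0}\|\cdot\OO(\delta)\cdot\|\zeta\|=\OO(\delta)\|\zeta\|$ since the adjoint eigenvectors have $\OO(1)$ norm by \eref{eq:n1n2}.

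Finally, \eref{eq:termuP} follows immediately from \eref{eq:termU1}--\eref{eq:termU2} by unwinding the projection:
\begin{equ}
U\zeta-\P_{\phi_0}U\zeta=\v1_{\phi_0}\langle \n1_{\phi_0}\mid U\zeta\rangle+\v2_{\phi_0}\langle \n2_{\phi_0}\mid U\zeta\rangle,
\end{equ}
which is $\OO(\delta)\|\zeta\|$ because $\|\v{j}_{\phi_0}\|=\OO(1)$ by \eref{eq:v}. The only genuinely delicate point I anticipate is verifying that $U-M_{\phi_0,\epsilon}$ really is $\OO(\delta)$ uniformly in $\epsilon$ on the range of $\P_{\phi_0}$; this reduces to differentiability of $\phi\mapsto p(\phi)$ at $\phi_0$, which is already provided by the implicit function theorem step in Theorem~\ref{th:fixed2}, so no further work is required beyond chasing the constants through.
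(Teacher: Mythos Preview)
The proposal is correct and takes essentially the same approach as the paper: both recognize that $U|_{\phi=\phi_0}=M_{\phi_0,\epsilon}$, use the adjoint Jordan structure $M^*_{\phi_0,\epsilon}\n2_{\phi_0}=0$ and $M^*_{\phi_0,\epsilon}\n1_{\phi_0}=\n2_{\phi_0}\perp\zeta$, and control the remainder $U-M_{\phi_0,\epsilon}$ by $\OO(\delta)$ via the smoothness of $p(\phi)$. The paper simply writes out $U^*\n{j}_{\phi_0}$ in explicit block form rather than invoking the matrix identification at the outset; note one cosmetic slip in your write-up --- you set $L=-\epsilon\Delta-\one$ whereas the paper has $L=\epsilon\Delta-\one$, though this does not affect the argument.
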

\begin{proof}
Note that
\begin{equa}\Label{eq:ABX}
\left\langle\n2_{\phi_0}\mid U\zeta\right\rangle
=
\left\langle
U^*\n2_{\phi_0} \mid  \zeta\right\rangle~,
\end{equa}
and so,
\begin{equa}
U^*&\n2_{\phi_0}=\pmat{
0&\bigl((L-\phi) +3(\pphi)^2\bigr)\\
-\bigl(L-\phi
+(\pphi)^2\bigr)&0} \pmat{\pphizero\\0}=\pmat{\OO(\delta )\\0}~.
\end{equa}
We use here, and throughout, the smoothness of $p(\phi)$  and the expansion of $p(\phi)$. The replacement of $\phi$ by
$\phi_0$ therefore leads to an error term in \eref{eq:termU2} of the form
\begin{equa}
\OO(\delta )\norm\xi\le \OO(\delta ) \norm\zeta ~.
\end{equa}
This proves \eref{eq:termU2}.

We next study
$
\left\langle \n1_{\phi_0}\mid U\zeta\right\rangle
$
and take the adjoint
$\left\langle    U^*\n1_{\phi_0}\mid \zeta\right\rangle$, using\\
$\n1_{\phi_0}=(2+\OO(\epsilon+\phi_0 ))(0,\partial_{\phi_0} p(\phi_0))^\T$.
Recall  that
\begin{equa}
\pmat{0&B_{\phi_0}\\A_{\phi_0}&0}\n1_{\phi_0}=\n2_{\phi_0}~,
\end{equa}
which is orthogonal to $\zeta=(\xi,\eta)^\T$. We write
\begin{equ}
  U^* =\pmat{0&B_\phi-B_{\phi_0}\\A_\phi-A_{\phi_0}&0}+\pmat{0&B_{\phi_0}\\
A_{\phi_0}&0}~,
\end{equ}
and therefore
\begin{equa}
\left\langle \n1_{\phi_0}\mid U\zeta\right\rangle&=
\left\langle \pmat{0&B_\phi-B_{\phi_0}\\
A_\phi-A_{\phi_0}&0}\n1_{\phi_0}\mid \zeta\right\rangle
&+\left\langle  \pmat{0&B_{\phi_0}\\
A_{\phi_0}&0}\n1_{\phi_0}\mid \zeta\  \right\rangle\\
&= \OO(\delta) \|\zeta \|~,
\end{equa}
since the second term is zero by construction.
This proves \eref{eq:termU1}.
The identity \eref{eq:termuP} follows.
\end{proof}

\begin{lemma}\label{lem:gamma}
The $\Gamma$-dependent terms of \eref{eq:xieta2} lead to the bounds
\begin{equa}
\left\langle \n2_{\phi_0} \mid \pmat{-C_\Gamma\xi-C_\Gamma\pphi\\-C_\Gamma\eta}\right\rangle&=\OO(\gamma \epsilon
  ^{n})\norm\zeta-2(1+\OO(\delta))\gamma \epsilon ^{2n-1}~,\label{eq:t2}\\
\left\langle \n1_{\phi_0}\mid \pmat{-C_\Gamma\xi-C_\Gamma\pphi\\-C_\Gamma\eta}
\right\rangle
&=\OO (\gamma \epsilon ^{n})\norm\zeta ~,\label{eq:t2a}\\
{\| \P_{\phi_0} \Gamma\zeta  - \Gamma\zeta \| }& { \le 
C \gamma\epsilon^{n} \norm\zeta }  ~.\label{eq:t2P}
\end{equa}

\end{lemma}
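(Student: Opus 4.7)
The strategy is to exploit the extreme spatial localization of $C_\Gamma$: only its $(n,n)$ entry is nonzero and equals $\gamma\epsilon$. Consequently, every inner product with $\Gamma\zeta$ or $\Gamma\pphi$ reduces to a single product involving the $n$-th components of $p(\phi_0)$ or $\dphi p(\phi_0)$, each of which is $\OO(\epsilon^{n-1})$ by \tref{th:fixed2} and its derivative in $\phi_0$. This extra $\epsilon^{n-1}$ is what combines with the $\gamma\epsilon$ from $C_\Gamma$ to produce the advertised $\gamma\epsilon^n$ (or $\gamma\epsilon^{2n-1}$) rates.

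For \eref{eq:t2}, I would use $\n2_{\phi_0}=(2+\OO(\phi_0,\epsilon))(\pphizero,0)^\T$, so only the $\xi$-block contributes:
\begin{equ}
\left\langle \n2_{\phi_0}\mid\pmat{-C_\Gamma\xi-C_\Gamma\pphi\\-C_\Gamma\eta}\right\rangle
=-(2+\OO(\phi_0,\epsilon))\,\pphizero_n\,\gamma\epsilon\,(\xi_n+\pphi_n)\ .
\end{equ}
The $\xi_n$ piece is bounded by $|\pphizero_n|\gamma\epsilon\|\zeta\|=\OO(\gamma\epsilon^n)\norm\zeta$. For the $\pphi_n$ piece I would write $\pphi_n=\pphizero_n+\OO(\delta)$ by smoothness of $p$, so $\pphizero_n\pphi_n=(\pphizero_n)^2(1+\OO(\delta))$, and the asymptotics $\pphizero_n=(-1)^{n-1}\epsilon^{n-1}(1+\OO(\epsilon,\phi_0))$ from \eref{eq:sol} give $(\pphizero_n)^2=\epsilon^{2n-2}(1+\OO(\epsilon,\phi_0))$. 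Absorbing the $\OO(\epsilon,\phi_0)$ prefactor corrections into the $\OO(\delta)$ slot (they are of the same or smaller order than the terms already retained and multiply the same quantity $\gamma\epsilon^{2n-1}$) yields the leading $-2(1+\OO(\delta))\gamma\epsilon^{2n-1}$.

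For \eref{eq:t2a}, Lemma \ref{lem:binverse} identifies $B^{-1}_{\phi_0}\pphizero=\dphi\pphizero$, so $\n1_{\phi_0}=(2+\OO)(0,\dphi\pphizero)^\T$. Only the $\eta$-block contributes, giving
\begin{equ}
\left\langle\n1_{\phi_0}\mid\pmat{-C_\Gamma\xi-C_\Gamma\pphi\\-C_\Gamma\eta}\right\rangle
=-(2+\OO)\,(\dphi\pphizero)_n\,\gamma\epsilon\,\eta_n\ .
\end{equ}
Differentiating \eref{eq:sol} in $\phi_0$ shows $(\dphi\pphizero)_n=\OO(\epsilon^{n-1})$, so the entire expression is $\OO(\gamma\epsilon^n)\norm\zeta$.

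For \eref{eq:t2P}, I would use the explicit form of the projector,
\begin{equ}
\P_{\phi_0}\Gamma\zeta-\Gamma\zeta
=-|\v1_{\phi_0}\rangle\langle\n1_{\phi_0}\mid\Gamma\zeta\rangle
-|\v2_{\phi_0}\rangle\langle\n2_{\phi_0}\mid\Gamma\zeta\rangle\ ,
\end{equ}
then bound each scalar coefficient by the same computation as above (now with $\zeta$ in place of $\zeta+(\pphi,0)^\T$, which only drops the leading $\gamma\epsilon^{2n-1}$ term), yielding $|\langle\n{j}_{\phi_0}\mid\Gamma\zeta\rangle|\le C\gamma\epsilon^n\norm\zeta$. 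Since $\v1_{\phi_0}=(0,\pphizero)^\T$ and $\v2_{\phi_0}=(\dphi\pphizero,0)^\T$ are both dominated by their first nontrivial entry ($\pphizero_1=1+\OO$ and $\dphi\pphizero_1=\tfrac12+\OO$), each has norm $\OO(1)$, and the bound follows. The only mildly delicate point—and the one I would carry out most carefully—is the extraction of the exact coefficient $-2$ and the explicit $\OO(\delta)$ dependence in \eref{eq:t2}, since this is the term that will drive the slow drift $\dot\phi\sim-2\gamma\epsilon^{2n-1}$ in \tref{thm:main}; everything else is a routine application of the localization of $C_\Gamma$ together with the $\epsilon^{j-1}$ decay of the breather profile and its $\phi$-derivative.
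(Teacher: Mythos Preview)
Your proposal is correct and follows essentially the same route as the paper: exploit that $C_\Gamma$ has only the $(n,n)$ entry nonzero, reduce each inner product to a single term involving $p(\phi_0)_n$ or $(\partial_{\phi_0}p(\phi_0))_n$, and invoke the $\epsilon^{n-1}$ decay of these components from \eref{eq:sol}. Your derivation of \eref{eq:t2P} is slightly more explicit than the paper's (which simply says ``the last equation follows from the first two''), but the argument is the same.
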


\begin{proof}From the definition of $\n2_{\phi_0}$ we get
\begin{equa}
  2&\left\langle
\pphizerozero
\mid\pmat{-C_\Gamma\xi-C_\Gamma\pphi\\-C_\Gamma\eta}
 \right\rangle\\&= -2\gamma\epsilon \xi_n\, \pphizero_n -2\gamma \epsilon
 (p(\phi_0))_n (p(\phi))_n\\&=\OO(1)
{\gamma\epsilon  \xi_n\epsilon ^{n-1}}-2\gamma \epsilon
^{2n-1}(1+\OO(\delta))~,
\end{equa}
using the expansion of $p(\phi)$ in powers of $\epsilon $, and
observing that $C_\Gamma$ is proportional to $\gamma \epsilon $.

Similarly, from the definition of $\n1_{\phi_0}$, we get
\begin{equa}
\left\langle \n1_{\phi_0}\mid \pmat{C_\Gamma\xi+C_\Gamma\pphi\\C_\Gamma\eta}
\right\rangle
=  \partial_{\phi_0}p(\phi_0)\cdot C_\Gamma\eta=\gamma \epsilon (\n1_{\phi_0})_n
\eta_n=\OO(1) \gamma \epsilon ^{n}\eta_n~,
\end{equa}
using the expansion for $(\n1_{\phi_0})_j=\partial_{\phi_0}p_j(\phi_0)=
\OO( \epsilon ^{j-1})$.
The last equation follows from the first two.
\end{proof}

\

\begin{lemma}\label{lem:tphitheta}
  Consider the terms involving $\tdotphitheta$. We have, omitting
  throughout the
  factor $\tdotphitheta$:
\begin{equa}
 \left\langle \n2_{\phi_0} \mid \pmat{\eta\\-\xi-p(\phi)} \right\rangle
&= (2+\OO(\epsilon+\phi ))\norm\zeta ~,\label{eq:t3}\\
 \left\langle \n1_{\phi_0} \mid \pmat{\eta\\-\xi-p(\phi)} \right\rangle
&= -1+\OO(\delta+\epsilon )+\OO(1)\norm\zeta ~,\label{eq:t3b}\\
 \P_{\phi_0} \pmat{\eta\\-\xi-p(\phi)} 
&=\pmat{\OO(\norm\zeta )\\\OO(\delta+\epsilon+\norm\zeta)}~.\label{eq:t3c}\\
\end{equa}

\end{lemma}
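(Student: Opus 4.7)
\emph{Proof proposal for \lref{lem:tphitheta}.}
The plan is to evaluate each of the three expressions by inserting the explicit forms $\n1_{\phi_0}=(2+\OO(\epsilon+\phi_0))(0,\partial_{\phi_0} p(\phi_0))^\T$ and $\n2_{\phi_0}=(2+\OO(\epsilon+\phi_0))(p(\phi_0),0)^\T$ from \eref{eq:n1n2}, exploiting that each adjoint eigenvector has a single nonzero block so only one of $\eta$ or $-\xi-p(\phi)$ contributes. Throughout, I will use smoothness of $p(\cdot)$ to write $p(\phi)=p(\phi_0)+\OO(\delta)$, and the bound $\|p(\phi_0)\|=\OO(1)$, $\|\partial_{\phi_0}p(\phi_0)\|=\OO(1)$ from \eref{eq:sol}.

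For \eref{eq:t3}, only the upper block of $\n2_{\phi_0}$ is nonzero, so the pairing reduces to $(2+\OO(\epsilon+\phi_0))\langle p(\phi_0),\eta\rangle$. Since $\|p(\phi_0)\|=\OO(1)$ and $\|\eta\|\le\norm\zeta$, this is controlled by $(2+\OO(\epsilon+\phi_0))\norm\zeta$, as claimed.

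For \eref{eq:t3b}, only the lower block of $\n1_{\phi_0}$ is nonzero, so the pairing equals
\begin{equa}
-(2+\OO(\epsilon+\phi_0))\langle \partial_{\phi_0} p(\phi_0),\xi\rangle
-(2+\OO(\epsilon+\phi_0))\langle \partial_{\phi_0} p(\phi_0),p(\phi)\rangle~.
\end{equa}
The first term is $\OO(1)\norm\zeta$. For the second, I will write $p(\phi)=p(\phi_0)+\OO(\delta)$ and then invoke the normalization $\langle \n1_{\phi_0}\mid \v1_{\phi_0}\rangle =1$ from \eref{eq:n1v1}, which is precisely $(2+\OO(\epsilon+\phi_0))\langle \partial_{\phi_0} p(\phi_0),p(\phi_0)\rangle=1$. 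Substituting, the second term becomes $-1+\OO(\delta+\epsilon+\phi_0)$, giving the asserted expression.

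For \eref{eq:t3c}, I expand $\P_{\phi_0}=\one-|\v1_{\phi_0}\rangle\langle \n1_{\phi_0}|-|\v2_{\phi_0}\rangle\langle \n2_{\phi_0}|$ and apply the results just obtained, together with $\v1_{\phi_0}=(0,p(\phi_0))^\T$ and $\v2_{\phi_0}=(\partial_{\phi_0}p(\phi_0),0)^\T$ from \eref{eq:v} and \lref{lem:binverse}. The first component becomes $\eta-\OO(\norm\zeta)\,\partial_{\phi_0}p(\phi_0)=\OO(\norm\zeta)$. The second component becomes
\begin{equa}
-\xi-p(\phi)-\bigl(-1+\OO(\delta+\epsilon)+\OO(\norm\zeta)\bigr)p(\phi_0)
=-\xi+(p(\phi_0)-p(\phi))+\OO(\delta+\epsilon+\norm\zeta)~,
\end{equa}
and since $p(\phi_0)-p(\phi)=\OO(\delta)$, this is $\OO(\delta+\epsilon+\norm\zeta)$. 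The main subtlety, and the step requiring care, is the emergence of the explicit constant $-1$ in \eref{eq:t3b}: it is not obvious from the raw expansions, and one must recognize it as a direct consequence of the biorthogonal normalization; the remaining work is bookkeeping of the error orders.
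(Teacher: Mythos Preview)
Your proof is correct and follows essentially the same route as the paper: you exploit the block structure of $\n1_{\phi_0}$ and $\n2_{\phi_0}$ so that only one of $\eta$ or $-\xi-p(\phi)$ survives in each pairing, invoke the normalization $\langle \n1_{\phi_0}\mid \v1_{\phi_0}\rangle=1$ to produce the constant $-1$ in \eref{eq:t3b}, and then assemble \eref{eq:t3c} by expanding $\P_{\phi_0}$ and tracking the cancellation of $p(\phi_0)$ against $p(\phi)$ up to $\OO(\delta)$. The only cosmetic difference is that the paper phrases the pairings in terms of $(p(\phi_0)\cdot\eta)$ and $(\partial_{\phi_0}p(\phi_0)\cdot\xi)$ before bounding, whereas you pass directly to $\OO(\norm\zeta)$; the content is the same.
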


\begin{proof}

  \eref{eq:t3} follows by observing that
\begin{equa}
 \left\langle \pphizerozero \mid \pmat{\eta\\-\xi} \right\rangle
= (\pphizero\cdot \eta)~,
\end{equa}
and
\begin{equa}
 \left\langle \pphizerozero \mid \pmat{0\\-\pphi}
\right\rangle=0~,
\end{equa}
and using $\norm{p(\phi_0)}=1+\OO(\epsilon+\phi )$.
To prove \eref{eq:t3b}, observe that, by our normalization,
\begin{equa}
  \left\langle \n1_{\phi_0} \mid  \pmat{0\\\pphizero}
  \right\rangle=\left\langle \n1_{\phi_0}\mid \v1_{\phi_0} \right\rangle=1~,
\end{equa}
and therefore,
\begin{equa}
 \left\langle \n1_{\phi_0} \mid  \pmat{0\\-\pphi}\right\rangle
  =-1+\OO(\delta)~.
\end{equa}
On the  other hand,
\begin{equa}
  \left\langle \n1_{\phi_0} \mid \pmat{\eta\\-\xi}
  \right\rangle=-\partial_{\phi_0}\pphizero
\xi~,
\end{equa}
and thus \eref{eq:t3b} follows.
Finally \eref{eq:t3c} follows from \eref{eq:t3} and \eref{eq:t3b};
\begin{equa}
  \P_{\phi_0}& \pmat{\eta\\-\xi-p(\phi)}=\pmat{\eta\\-\xi-p(\phi)}\\&-
  (2+\OO(\epsilon ))\pmat{\partial_{\phi_0}p(\phi_0)\\0}(p(\phi_0)\cdot \eta)
  \\&-\pmat{0\\p(\phi_0)}(-1+\OO(\delta+\epsilon+\norm\zeta  ))~.
\end{equa}
The term involving $\eta$ cancels by the normalization of $\v2$ and $\n2$.
The term $-p(\phi_0)\cdot (-1)$ cancels with $-p(\phi)$ up to
$\OO(\delta+\epsilon  )$, and thus, \eref{eq:t3c} follows.
\end{proof}

\begin{lemma}\label{lem:phi}
  The terms involving $\dot \phi$ are bounded as follows (omitting the
  factor $\dot \phi$):
  \begin{equa}
   \left\langle \n2_{\phi_0} \mid
  \pmat{-\dpphi\\0}\right\rangle
  &=-1+\OO(\delta)~,
  \label{eq:t4}\\
   \left\langle \n1_{\phi_0} \mid
  \pmat{-\dpphi\\0}\right\rangle
  &=0~,
  \label{eq:t4b}\\
  \P_{\phi_0} \pmat{-\partial_{\phi}p(\phi)\\0}&=
  \pmat{\OO(\delta  )\\0}~.\label{eq:t4c}
     \end{equa}
\end{lemma}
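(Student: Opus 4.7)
My plan is to reduce all three identities to the normalization relations \eref{eq:n1v1} together with Lemma \ref{lem:binverse}, and then use the smoothness of $\phi \mapsto p(\phi)$ to trade $p(\phi)$ for $p(\phi_0)$ at the cost of an $\OO(\delta)$ error.

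First, for \eref{eq:t4b}, I would simply observe that $\n1_{\phi_0}$ has vanishing \emph{first} block, since it equals a scalar multiple of $(0,B^{-1}_{\phi_0}p(\phi_0))^\T$, while the vector $(-\partial_\phi p(\phi),0)^\T$ has vanishing \emph{second} block. The two inner-product contributions are therefore each zero identically (not just to leading order), and \eref{eq:t4b} holds without remainder.

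Next, for \eref{eq:t4}, I would compute
\begin{equ}
\left\langle \n2_{\phi_0}\mid \pmat{-\dpphi\\0}\right\rangle
= -(2+\OO(\epsilon+\phi_0))\, p(\phi_0)\cdot \partial_\phi p(\phi) .
\end{equ}
Using smoothness, $\partial_\phi p(\phi)=\partial_\phi p(\phi_0)+\OO(\delta)$. Then by Lemma \ref{lem:binverse} $\partial_\phi p(\phi_0)=B^{-1}_{\phi_0,\epsilon}p(\phi_0)$, so
\begin{equ}
(2+\OO(\epsilon+\phi_0))\, p(\phi_0)\cdot \partial_\phi p(\phi_0)
=(2+\OO(\epsilon+\phi_0))\,\langle p(\phi_0)\mid B^{-1}_{\phi_0,\epsilon}p(\phi_0)\rangle,
\end{equ}
which is exactly $\langle \n2_{\phi_0}\mid \v2_{\phi_0}\rangle=1$ by the normalization \eref{eq:n1v1}. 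Collecting the $\OO(\delta)$ Taylor remainder gives $-1+\OO(\delta)$, as claimed.

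Finally, for \eref{eq:t4c}, I would apply the definition
\begin{equ}
\P_{\phi_0} v = v -\langle \n1_{\phi_0}\mid v\rangle\v1_{\phi_0}-\langle \n2_{\phi_0}\mid v\rangle\v2_{\phi_0}
\end{equ}
to $v=(-\partial_\phi p(\phi),0)^\T$. The first correction term is absent by \eref{eq:t4b}, and the second, by \eref{eq:t4}, equals $-(-1+\OO(\delta))\v2_{\phi_0} = (1+\OO(\delta))(B^{-1}_{\phi_0,\epsilon}p(\phi_0),0)^\T$. Again applying Lemma \ref{lem:binverse} to replace $B^{-1}_{\phi_0,\epsilon}p(\phi_0)$ by $\partial_\phi p(\phi_0)$, the leading contributions in the first block cancel up to the discrepancy between $\partial_\phi p(\phi)$ and $\partial_\phi p(\phi_0)$, which is $\OO(\delta)$. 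The second block is zero throughout, yielding \eref{eq:t4c}.

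There is no real obstacle here: every step is a direct consequence of results already established in the excerpt. The only point where one must be mildly careful is bookkeeping of the constants $(2+\OO(\epsilon+\phi_0))$ in $\n1_{\phi_0},\n2_{\phi_0}$ and the Taylor remainders in $p(\phi)-p(\phi_0)$, but these are both absorbed into the $\OO(\delta)$ error (since $\epsilon$ and $\phi_0$ are themselves small, and the remainders are analytic in $\delta$).
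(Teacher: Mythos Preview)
Your proof is correct and follows essentially the same route as the paper: both arguments exploit the block structure of $\n1_{\phi_0}$ for \eref{eq:t4b}, reduce \eref{eq:t4} to the normalization $\langle \n2_{\phi_0}\mid \v2_{\phi_0}\rangle=1$ via Lemma~\ref{lem:binverse}, and obtain \eref{eq:t4c} by subtracting the two rank-one pieces of $\one-\P_{\phi_0}$ so that $-\partial_\phi p(\phi)+\partial_{\phi_0}p(\phi_0)=\OO(\delta)$. One small caveat: in your closing remark, the prefactor $(2+\OO(\epsilon+\phi_0))$ is not ``absorbed into $\OO(\delta)$ because $\epsilon,\phi_0$ are small''---it is eliminated \emph{exactly} by the normalization, which you already used correctly earlier.
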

\begin{proof}
  Recall that
  \begin{equ}
    \pmat{\dpphi\\0}\sim (1/2,0,\dots,0)^\T~,
  \end{equ}
  and so, since $\n2_{\phi_0}\sim 2 (p(\phi_0),0)^\T$, we find
  \begin{equa}
\left\langle \n2_{\phi_0} \mid
  \pmat{-\dpphi\\0}\right\rangle&=\left\langle 2\pphizerozero\mid
\pmat{-\dpphi\\0}\right\rangle\\
&=- 2\pphizero\cdot \partial_\phi\pphi= -1+\OO(\delta )~,
  \end{equa}
  which is \eref{eq:t4}. From the form of $\n1$, \eref{eq:t4b} is
  obvious.
  Finally,
  \begin{equa}\Label{eq:bizarre}
    \P_{\phi_0} \pmat{-\partial_{\phi}p(\phi)\\0}&=
    \pmat{-\partial_{\phi}p(\phi)\\0}+\pmat{\partial_{\phi_0}p(\phi_0)\\0}
    +\pmat{\OO(\delta  )\\0}~.
  \end{equa}
  
\end{proof}

We now combine the lemmas \ref{lem:U}--\ref{lem:phi}.
Note that $\Q_{\phi_0}\equiv \one-\P_{\phi_0}$ projects on a
two-dimensional space. 
Let $\Q^{(j)}=|\v{j}_{\phi_0}\rangle \langle \n{j}_{\phi_0} |$,
$j=1,2$.

For $j=2$, we get contributions:\\
From \eref{eq:termU2}, we have $\Q^{(2)} U\zeta =\OO(\delta)
\norm{\zeta }$.\\
From \eref{eq:t2} we get $\Q^{(2)}\pmat{C_\Gamma
  (\xi+p(\phi))\\C_\Gamma \eta}=-2(1+\OO(\delta))\gamma \epsilon ^{2n-1}+\OO(\gamma \epsilon
^n )\norm\zeta $.\\
From \eref{eq:t3} we get $\tdotphitheta\Q^{(2)}\pmat{\eta\\-\xi-p(\phi)}=\tdotphitheta\OO(\norm\zeta)$, and\\ from
\eref{eq:t4} we get
$\dot\phi\Q^{(2)}\pmat{-\partial_\phi
  p(\phi)\\0}=\dot\phi(-1+\OO(\delta  ))$.

Similarly,
for $j=1$, we get contributions:\\
From \eref{eq:termU1}, we have $\Q^{(1)} U\zeta =\OO(\delta )
\norm{\zeta }$.\\
From \eref{eq:t2a} we get $\Q^{(1)} \pmat{C_\Gamma
  (\xi+p(\phi))\\C_\Gamma \eta}=\OO(\gamma \epsilon
^n) \norm\zeta $.\\
From \eref{eq:t3b} we get\\
$\tdotphitheta\Q^{(1)}\pmat{\eta\\-\xi-p(\phi)}=\tdotphitheta(-1+\OO(\delta
+\epsilon +\norm\zeta)$, and\\ from
\eref{eq:t4b} we get
$\dot\phi\Q^{(1)}\pmat{-\partial_\phi p(\phi)\\0}=0$.

By construction we have that $\Q_{\phi_0}\equiv \one-\P_{\phi_0}$,
projects on the null-space, and therefore
\begin{equa}
  \Q_{\phi_0} \dot \zeta =0~.
\end{equa}
Since $\Q_{\phi_0}\dot\zeta=0$, we find, upon summing, for the ``1'' component (and recalling the nonlinear terms in \eqref{eq:xieta2})
\begin{equa}\label{eq:component1}
  0&=\OO(\delta )\norm{\zeta }+(-2+\OO(\delta ))\gamma \epsilon ^{2n-1}\\&+\OO(\gamma \epsilon
^n \norm\zeta)+ \tdotphitheta\OO(\norm\zeta)+\dot\phi(-1+\OO(\delta )) + {\OO(\| \zeta \|^2)}~,
\end{equa}
and for the ``2'' component:
\begin{equa}\label{eq:component2}
  0&= (\OO(\delta )+\OO(\gamma \epsilon
^n) \norm\zeta- \tdotphitheta(1+\OO(\delta +\epsilon +\norm\zeta ))+ {\OO(\| \zeta \|^2)}~.
\end{equa}

Finally, using the projection $\P_{\phi_0}$, we find:
From \eref{eq:termuP},\\ $\P_{\phi_0} U\zeta = U\zeta +
\OO(\delta )\|\zeta\|$,\\
from \eref{eq:t2P}, $\P_{\phi_0}\Gamma\zeta =\Gamma\zeta
+\OO(\gamma\epsilon^{n} )\|\zeta\|$,\\
from \eref{eq:t3c}, $\tdotphitheta\P_{\phi_0} \pmat{\eta\\-\xi-p(\phi)} 
=\tdotphitheta\pmat{0\\\OO(\delta+\epsilon )}$,\\
and finally from \eref{eq:t4c},
$ \dot\phi\P_{\phi_0} \pmat{-\partial_{\phi}p(\phi)\\0}=
  \dot \phi\pmat{\OO(\delta )
    \\0}$.

  Summing these terms, 
  we get {
  \begin{equa}\label{eq:componentzeta}
    \dot \zeta =\LL_{\phi_0,\gamma}\zeta  +\tdotphitheta
    \pmat{0\\\OO(\delta+°\epsilon )} +\dot \phi\pmat{\OO(\delta )
    \\0} + \OO(\| \zeta \|^2) ~.
  \end{equa}
}

  Simplifying the notation somewhat, and substituting
  \eref{eq:component2} into \eref{eq:component1} we formulate
  \eref{eq:component1}-(\ref{eq:componentzeta}) as a proposition: {
  \begin{proposition}\label{prop:evolution}
    One has
    \begin{equa}[eq:zetadot]
      t\dot\phi +\dot \theta &=\OO(\delta +\gamma \epsilon ^{n})\norm\zeta  + \OO(\| \zeta \|^2)~,\Label{eq:tphidot}\\
  \dot\phi &=-(2+\OO(\delta ))\gamma\epsilon ^{2n-1}+\OO(\delta
  +\gamma \epsilon ^n)\norm\zeta + \OO(\| \zeta \|^2)~,\Label{eq:phidot}\\
  \dot\zeta&= \LL_{\phi_0,\gamma} \zeta +\OO(\delta +\gamma \epsilon
  ^n)\norm\zeta\pmat{0\\\OO(\delta +\epsilon )}\\&~~~-\pmat{(2+\OO(\delta ))\gamma\epsilon
    ^{2n-1}\OO(\delta )\\0} + \OO(\| \zeta \|^2)~.
\end{equa}
Here, as $\zeta=(\xi,\eta)^\T$,
\begin{equa}\Label{eq:generator}
  \LL_ {\phi_0,\gamma}\zeta= 
  \begin{pmatrix}
    0&A_{\phi_0}\\
    B_{\phi_0} &0
  \end{pmatrix} \begin{pmatrix}
    \xi\\\eta
  \end{pmatrix} - 
  \begin{pmatrix}
    C_\Gamma &0\\
    0&C_\Gamma
  \end{pmatrix}
  \begin{pmatrix}
    \xi\\\eta
  \end{pmatrix}~.
\end{equa}
  \end{proposition}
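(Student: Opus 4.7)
The plan is to assemble the four preceding lemmas (Lemmas \ref{lem:U}--\ref{lem:phi}) using the constraint that $\zeta(t)$ stays in $\mathrm{range}(\P_{\phi_0})$. First I would write the right-hand side of \eref{eq:xieta2} as the sum of four groups of terms: the linear evolution $U\zeta$, the dissipative contribution containing $C_\Gamma$, the terms proportional to $(t\dot\phi+\dot\theta)$, and the terms proportional to $\dot\phi$, collecting everything else into the $\OO(\|\zeta\|^2)$ remainder coming from the nonlinearity.

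Next, since $\zeta$ is constrained to lie in the range of $\P_{\phi_0}$, the complementary projection $\Q_{\phi_0}=\one-\P_{\phi_0}$ satisfies $\Q_{\phi_0}\dot\zeta=0$. Applying the two rank-one projectors $\Q^{(j)}=|\v{j}_{\phi_0}\rangle\langle \n{j}_{\phi_0}|$ to \eref{eq:xieta2} gives two scalar identities. For each $j\in\{1,2\}$ I would insert the four expressions already computed: \eref{eq:termU2}, \eref{eq:termU1} for the $U\zeta$ piece; \eref{eq:t2}, \eref{eq:t2a} for the $\Gamma$ piece; \eref{eq:t3}, \eref{eq:t3b} for the $(t\dot\phi+\dot\theta)$ piece; and \eref{eq:t4}, \eref{eq:t4b} for the $\dot\phi$ piece. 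This produces precisely the two scalar equations \eref{eq:component1} and \eref{eq:component2}.

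At this point the derivation is essentially linear algebra. I would solve \eref{eq:component2} for $t\dot\phi+\dot\theta$ (the coefficient is $-1+\OO(\delta+\epsilon+\|\zeta\|)$, hence invertible for small data) to obtain the first asserted equation. Substituting this expression back into \eref{eq:component1} and isolating $\dot\phi$ (whose coefficient is $-1+\OO(\delta)$, again invertible) gives the second equation, with the principal term $-2\gamma\epsilon^{2n-1}$ coming from \eref{eq:t2}. For the evolution of $\zeta$ itself I would then apply $\P_{\phi_0}$ directly to \eref{eq:xieta2}, using \eref{eq:termuP}, \eref{eq:t2P}, \eref{eq:t3c}, and \eref{eq:t4c}. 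After substituting the just-derived expressions for $\dot\phi$ and $t\dot\phi+\dot\theta$, one arrives at \eref{eq:componentzeta}, which rearranges to the claimed form with generator $\LL_{\phi_0,\gamma}$ as defined in \eref{eq:generator}.

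The main obstacle is bookkeeping rather than analysis: one must track simultaneously the scales $\delta$, $\gamma\epsilon^n$, $\gamma\epsilon^{2n-1}$, $\|\zeta\|$ and $\|\zeta\|^2$, verify that the cross-multiplications of error terms remain subdominant, and check that the coefficients of $\dot\phi$ and $(t\dot\phi+\dot\theta)$ are invertible so the implicit equations can actually be solved. The subtle structural point is that the principal balance in \eref{eq:component1} forces $\dot\phi$ to be of order $\gamma\epsilon^{2n-1}$, because the only nonvanishing contribution of size not controlled by $\zeta$ comes from the $p(\phi)_n$ term in $\langle \n2_{\phi_0}\mid -C_\Gamma p(\phi)\rangle$ of \eref{eq:t2}; every other source either is proportional to $\zeta$ or vanishes by the orthogonality built into $\n2_{\phi_0}$. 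Once this is recognized the proof is a direct combination of the four lemmas.
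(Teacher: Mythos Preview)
Your proposal is correct and follows essentially the same route as the paper: the paper too combines Lemmas~\ref{lem:U}--\ref{lem:phi}, uses $\Q_{\phi_0}\dot\zeta=0$ to extract the two scalar relations \eref{eq:component1}--\eref{eq:component2}, substitutes \eref{eq:component2} into \eref{eq:component1}, and applies $\P_{\phi_0}$ for the $\zeta$-equation. Your identification of the key structural point---that the sole inhomogeneous contribution not controlled by $\zeta$ is the $-2\gamma\epsilon^{2n-1}$ term from \eref{eq:t2}---matches the paper's derivation exactly.
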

}

\section{Bounds on the evolution of $\zeta $}\label{sec:zeta}

In principle, $\|\zeta\|$ can grow as the system evolves, and there
are { two }possible causes. First, for short times, the bound
\eref{eq:semigroup_orig}
\begin{equ}
    \left\|e^{t\LL_{\phi_0,\gamma}} \zeta \right\|\le (1+C_n\gamma )
    e^{-\kappa_n\gamma \epsilon t}\|  \zeta\|~,
  \end{equ}
does not contract. {Secondly, $\zeta(t)$ is orthogonal to the 
cylinder of breathers at the {\em initial} point $(\phi_0,\theta_0)$, but as
$\phi$ and $\theta$ evolve with time, this is no longer the case.
We must periodically reorthogonalize $\zeta(t)$ by a
procedure which we detail in the next
section, and which replaces $\zeta (T)$ by $\widehat\zeta$. }
As we will show in \eref{eq:reorthog2}, this leads to
a growth which is bounded by
\begin{equ}\label{eq:repeat}
\| \widehat{\zeta} \| \le (1+ C_R (\phi(T)-\phi_0) \gamma \epsilon^n) \| \zeta(T)\|\ .
\end{equ}

In this section we show that the contraction in the semigroup generated by the dissipative terms in the equation
is sufficient to overcome those growths, if we wait a sufficiently long time. We
will show (up to details spelled out below) that if $\norm{\zeta
  (0)}\le \gamma \epsilon ^n$, and $\TT=C_\TT/\epsilon $, then $(1+C_R
|\phi(\TT)-\phi_0|\gamma \epsilon ^n)\norm{\zeta (\TT)}\le
\gamma \epsilon ^n$. Furthermore for all $t\in[0,\TT]$ one has $\norm{\zeta
  (t)}\le 4 \gamma \epsilon ^n$.
To prove such statements,
we reconsider the equations of \pref{prop:evolution} which
we rewrite {in a slightly simplified way:}
We define
$$\delta(t)
=\phi(t)-\phi_0~,
$$ and then
\begin{equa}
  \dot s &=\OO(\delta +\gamma \epsilon ^{n})\norm\zeta + \OO( \| \zeta(t) \|^2) ~,\label{eq:tphidot2}\\
  \dot\delta  &=-(2+\OO(\delta ))\gamma\epsilon ^{2n-1}+\OO(\delta
  +\gamma \epsilon ^n)\norm\zeta +  \OO( \| \zeta(t) \|^2) ~ ~,\label{eq:phidot2}\\
    \dot \zeta &=\LL_{\phi_0,\gamma}\zeta  +\dot s
    \pmat{0\\\OO(\delta+°\epsilon )} +\dot \delta \pmat{\OO(\delta )
    \\0} +  \OO( \| \zeta(t) \|^2) ~ ~.\label{eq:zetadot2}
  \end{equa}
Here, as $\zeta=(\xi,\eta)^\T$,
\begin{equa}\Label{eq:generator2}
  \LL_ {\phi_0,\gamma}\zeta= 
  \begin{pmatrix}
    0&A_{\phi_0}\\
    B_{\phi_0} &0
  \end{pmatrix} \begin{pmatrix}
    \xi\\\eta
  \end{pmatrix} - 
  \begin{pmatrix}
    C_\Gamma &0\\
    0&C_\Gamma
  \end{pmatrix}
  \begin{pmatrix}
    \xi\\\eta
  \end{pmatrix}~.
\end{equa}
\begin{definition}
  We define the arrival time $\TT$ by
  \begin{equ}\label{eq:arrival}
    \TT= \frac{8C_n}{\kappa_n \epsilon }\equiv  C_\TT \epsilon ^{-1} ~.
  \end{equ}
  This definition ensures that
  \begin{equ}\label{eq:cngamma}
    (1+\frac{3}{2} C_n\gamma )e^{-\kappa_n\gamma  \epsilon \TT/4} \le 1~.
  \end{equ}
  The remaining factor $e^{-\kappa_n \gamma \epsilon \TT/4}$ will be used to
  bound $ (1+C_R|\delta |\gamma \epsilon ^n)$, while another $e^{-\kappa_n \gamma \epsilon
    \TT/2}$
  will be used to bound the contributions from the mixed terms in \eref{eq:tphidot2}--(\ref{eq:zetadot2}).
\end{definition}
Since we have a coupled system, we introduce a norm over times in $[0,\TT  ]$.
Let $x=(s,\delta ,\zeta)$,  and consider a family of functions
\begin{equ}
  \{x\}_{\TT  }=\{x(\tau )\}_{\tau \in[0,\TT  ]}~.
\end{equ}
We define
\begin{equa}
  \nnorm{\{x\}_t} =\max \bigl( \sup_{\tau \in[0,t]} |s(\tau )|,
  \sup_{\tau \in[0,t] }|\delta (\tau )| ,C_\zeta \norm{\zeta(\tau
    )}\bigr)~,\quad\text{with}\quad
  C_\zeta^{-1}  =\gamma \epsilon^{n}~.
\end{equa}
The equations \eref{eq:tphidot2}--(\ref{eq:zetadot2}) define an
evolution $t\mapsto \FF^t$.

\begin{theorem}\label{thm:evolution}
  Let $t\mapsto x(t)$ be a family of functions (not necessarily a
  solution of the system above) which satisfies
  $\nnorm{\{x\}_{\TT  }}\le 2$. Define $\FF x=\{\FF^\tau  x\}_{\tau
    \in[0,\TT  ]}$ as the family evolving from $x(0)$.
  If $\nnorm{\{x\}_0}\le2$ and $s(0)=\delta (0)=0$, then
the solution of the system above satisfies
\begin{equ}
  \nnorm{\{\FF  x\}_{t}} \le 4~, 
\end{equ}
 for all $t\in[0,\TT  ]$. In other words, $\FF$ maps such initial
 conditions into the sphere of
 radius $4$.

 Furthermore, when $\nnorm{\{x\}_0}\le 1$ (this means in particular $\norm{\zeta
   (0)}\le \gamma \epsilon ^n$) then the solution of the
 above system satisfies at time $\TT$:\footnote{We are not claiming
   that such bounds hold for all $t\le \TT$. The effect of the
   dissipation needs time to set in (at least if we want to re-project
   onto a new axis after some time).}
 \begin{equa}
  | \delta (\TT  )& + 2\gamma \epsilon ^{2n-1} \TT  | \le 2\gamma \epsilon ^{2n-1/2} \TT ~,\\
   \norm{\zeta (\TT  )} &\le e^{- \kappa_n \gamma \epsilon
     T/4} e^{- \kappa_n \gamma \epsilon
     T/2}(1-\gamma \epsilon)^{-1} (1+ \frac{3}{2} C_n \gamma) \gamma
   \epsilon^n \\&\le e^{- \kappa_n \gamma \epsilon
     T/4}\gamma \epsilon ^n~.
 \end{equa}
\end{theorem}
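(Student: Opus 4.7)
\medskip\noindent\textbf{Proof strategy.} I would prove \tref{thm:evolution} by a bootstrap/continuation argument built on Duhamel's formula. Suppose inductively that on some maximal interval $[0,T_*] \subseteq [0,\TT]$ we have $\nnorm{\{x\}_t} \le 4$, i.e.\ $|\delta(t)|\le 4$, $|s(t)|\le 4$, $\|\zeta(t)\|\le 4\gamma\epsilon^n$. Under this a priori hypothesis I would derive strictly sharper bounds and conclude $T_*=\TT$ by continuity. The three unknowns $\delta$, $s$, $\zeta$ would be treated in that order, because the leading behavior of $\delta$ determines what $\OO(\delta)$ really means later, and a sharp bound on $\delta$ is needed to bound the Duhamel forcing on $\zeta$.

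\medskip\noindent\textbf{Step 1: scalar estimates.} From \eref{eq:phidot2}, the dominant term in $\dot\delta$ is $-2\gamma\epsilon^{2n-1}$, with all corrections proportional to extra factors of $\delta$, $\gamma\epsilon^n$, or $\|\zeta\|^2$. Plugging in the bootstrap hypothesis and integrating from $0$ to $t\le\TT=C_\TT/\epsilon$ yields
\begin{equation*}
\delta(t) = -2\gamma\epsilon^{2n-1}\,t + \OO(\gamma^2\epsilon^{2n-2})~,
\end{equation*}
which is already the targeted estimate $|\delta(\TT)+2\gamma\epsilon^{2n-1}\TT|\le 2\gamma\epsilon^{2n-1/2}\TT$ and is far below $4$ in absolute value. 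Feeding this sharpened bound $|\delta(\tau)|\lesssim \gamma\epsilon^{2n-2}$ back into \eref{eq:tphidot2} makes every term on the right of $\dot s$ a product of two small factors, so $|s(\TT)|\le \TT\cdot\OO(\gamma^2\epsilon^{3n-2})$ is negligible as well.

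\medskip\noindent\textbf{Step 2: Duhamel for $\zeta$.} Writing $\dot\zeta = \LL_{\phi_0,\gamma}\zeta + R(t)$, where $R$ collects all the remaining forcing terms in \eref{eq:zetadot2}, variation of parameters gives
\begin{equation*}
\zeta(t) = e^{t\LL_{\phi_0,\gamma}}\zeta(0) + \int_0^t e^{(t-\tau)\LL_{\phi_0,\gamma}}\,R(\tau)\,d\tau~.
\end{equation*}
Corollary \ref{cor:semi} bounds the homogeneous piece by $(1+C_n\gamma)e^{-\kappa_n\gamma\epsilon t}\|\zeta(0)\|$. For the forcing, I would use the Step~1 bounds to estimate each term: $\dot s\cdot\OO(\delta+\epsilon)$, $\dot\delta\cdot\OO(\delta)$, and $\OO(\|\zeta\|^2)$. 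After substituting $|\delta|\lesssim \gamma\epsilon^{2n-2}$, $|\dot\delta|\lesssim \gamma\epsilon^{2n-1}$, $|\dot s|\lesssim \gamma^2\epsilon^{3n-2}$, each contribution is $\OO(\gamma^2\epsilon^{2n+\kappa})$ for some $\kappa\ge 0$ — in particular much smaller than $\gamma\epsilon^n\cdot\gamma\epsilon$. Integrating the semigroup kernel over $[0,\TT]$ then contributes at most a factor $(1-\gamma\epsilon)^{-1}$, yielding a total Duhamel correction bounded by $(1-\gamma\epsilon)^{-1}\,e^{-\kappa_n\gamma\epsilon\TT/2}\gamma\epsilon^n$.

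\medskip\noindent\textbf{Step 3: assembly and bootstrap closure.} At $t=\TT$, I would split the semigroup exponent as $e^{-\kappa_n\gamma\epsilon\TT} = e^{-\kappa_n\gamma\epsilon\TT/4}\cdot e^{-\kappa_n\gamma\epsilon\TT/2}\cdot e^{-\kappa_n\gamma\epsilon\TT/4}$. The first factor combines with $(1+\tfrac{3}{2}C_n\gamma)$ via the arrival-time identity \eref{eq:cngamma} to give $\le 1$; the middle factor yields the exponential decay in the final claim; the third factor absorbs the Duhamel integral estimate from Step~2. The final bound is
\begin{equation*}
\|\zeta(\TT)\|\le e^{-\kappa_n\gamma\epsilon\TT/4}\,e^{-\kappa_n\gamma\epsilon\TT/2}\,(1-\gamma\epsilon)^{-1}(1+\tfrac{3}{2}C_n\gamma)\gamma\epsilon^n\le e^{-\kappa_n\gamma\epsilon\TT/4}\gamma\epsilon^n~,
\end{equation*}
which is strictly below $\gamma\epsilon^n$, and the corresponding intermediate bound $\|\zeta(t)\|\le 2\gamma\epsilon^n$ for $t\in[0,\TT]$ also holds (the worst case being $t$ near $0$ where the homogeneous term has not yet decayed but no forcing has accumulated). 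Combined with the Step~1 bounds on $|\delta|,|s|$, this beats $\nnorm{\{x\}_t}\le 4$ with room to spare, so the bootstrap closes and $T_*=\TT$.

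\medskip\noindent\textbf{Main obstacle.} The delicate part is the careful bookkeeping in Step~2: each of the several terms in $R$ must be shown to carry \emph{enough} extra powers of $\epsilon$ (or $\gamma$) so that, after multiplication by the semigroup kernel and integration over an interval of length $\TT\sim 1/\epsilon$, the net contribution is still absorbed by the factor $e^{-\kappa_n\gamma\epsilon\TT/4}$. The potentially dangerous term is $\dot s\cdot\OO(\delta+\epsilon)$, since $\dot s$ only becomes small once the improved estimate on $|\delta|$ from Step~1 is inserted — so Steps 1 and 2 must be carried out in precisely this order.
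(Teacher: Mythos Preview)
Your overall strategy---bootstrap continuation, scalar bounds on $\delta,s$ first, then Duhamel for $\zeta$ against the semigroup estimate of Corollary~\ref{cor:semi}, and the same three-way splitting of $e^{-\kappa_n\gamma\epsilon T}$ at the end---is exactly the paper's route. Steps~2 and~3 match the paper's argument essentially line for line (the paper packages the Duhamel estimate via the auxiliary quantity $Z(t)=\sup_{\tau\le t}e^{\kappa_n\gamma\epsilon\tau/2}\|\zeta(\tau)\|$, but this is equivalent to what you do).

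There is, however, a genuine gap in Step~1. In \eref{eq:phidot2} the term $\OO(\delta)\|\zeta\|$ is the troublemaker: under the crude bootstrap $|\delta|\le4$, $\|\zeta\|\le4\gamma\epsilon^n$ it is $\OO(\gamma\epsilon^n)$, and integrating over $t\le T\sim\epsilon^{-1}$ gives an error $\OO(\gamma\epsilon^{n-1})$, which for $n\ge2$ \emph{dominates} the main term $2\gamma\epsilon^{2n-1}t$. So a single ``plug in and integrate'' pass does not give your claimed $\OO(\gamma^2\epsilon^{2n-2})$. A second bootstrap pass (feeding $|\delta|\lesssim\gamma\epsilon^{n-1}$ back in) does yield $\OO(\gamma^2\epsilon^{2n-2})$, but this is $\le 2\gamma\epsilon^{2n-1/2}T=2C_T\gamma\epsilon^{2n-3/2}$ only under the extra hypothesis $\gamma\lesssim\epsilon^{1/2}$, which the theorem does not assume.

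The paper fixes this by treating the $\delta$-linear part structurally rather than by crude substitution: set $u(t)=\delta(t)+At$ with $A=2\gamma\epsilon^{2n-1}$, rewrite \eref{eq:phidot2} as $\dot u=B(t)u+(C(t)-At\,B(t))$ with $|B|\le C_B(\gamma\epsilon^{2n-1}+\|\zeta\|)$ and $|C|\le C_C(\gamma\epsilon^n\|\zeta\|+\|\zeta\|^2)$, and solve by the integrating factor (\lref{lem:u}). The point is that the $\delta$-linear term now sees $u$ (small) and $-At$ (the leading behaviour) rather than the crude a~priori bound on $\delta$; this yields $|u(t)|\le At\,\epsilon^{1/2}$ for all small $\gamma,\epsilon$ with no coupling between them. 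Once you replace your Step~1 by this ODE argument, the rest of your proof goes through as written.
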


\begin{corollary}\label{cor:project}
  Referring to \eref{eq:repeat} (\ie \eref{eq:reorthog2}), we get the
  bound
  \begin{equa}
  \|\widehat\zeta\| &\le(1+ C_R (\phi(T)-\phi_0) \gamma \epsilon^n) \|
  \zeta(T)\|\\&\le   e^{- \kappa_n \gamma \epsilon T/4}(1+ C_R
  (\phi(T)-\phi_0) \gamma \epsilon^n)\gamma \epsilon ^n\le \gamma
  \epsilon ^n~.
  \end{equa}
 In other words, $\|\widehat\zeta\|$ (at time $T$) stays within the region $\gamma
 \epsilon ^n$.
\end{corollary}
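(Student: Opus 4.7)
The plan is to combine two ingredients already at hand: the contraction estimate for $\|\zeta(T)\|$ from Theorem~\ref{thm:evolution}, and the reorthogonalization estimate \eref{eq:repeat} (the same bound referenced as \eref{eq:reorthog2}). The corollary is a chain of three inequalities, and the only real content is verifying that the decay factor $e^{-\kappa_n \gamma \epsilon T/4}$ provided by the dissipative semigroup (Corollary~\ref{cor:semi}) is strong enough to absorb both the reorthogonalization growth factor and the constant $(1+\frac{3}{2}C_n\gamma)$ inherent to the semigroup bound.

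First I would invoke \eref{eq:repeat} directly to obtain the first inequality $\|\widehat\zeta\| \le (1+C_R(\phi(T)-\phi_0)\gamma \epsilon^n)\|\zeta(T)\|$. Then I would substitute the bound $\|\zeta(T)\| \le e^{-\kappa_n \gamma \epsilon T/4}\gamma \epsilon^n$ from the second conclusion of Theorem~\ref{thm:evolution} to reach the middle inequality exactly as written.

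For the final $\le \gamma \epsilon^n$, I would use the phase estimate $|\delta(T) + 2\gamma \epsilon^{2n-1}T| \le 2\gamma \epsilon^{2n-1/2}T$ from Theorem~\ref{thm:evolution} together with $T = C_\TT/\epsilon$ (see \eref{eq:arrival}) to conclude that $\delta(T) = \phi(T)-\phi_0$ is of order $\gamma \epsilon^{2n-2}$, hence the reorthogonalization amplification is $1 + O(\gamma^2 \epsilon^{3n-2})$. The choice of $\TT$ was made precisely so that \eref{eq:cngamma} holds, i.e. so that $e^{-\kappa_n \gamma \epsilon \TT/4}$ already suffices to absorb $(1+\frac{3}{2}C_n\gamma)$; the remaining exponential margin $e^{-\kappa_n \gamma \epsilon \TT/2}$ advertised right after \eref{eq:cngamma} then easily dominates the benign correction $1 + O(\gamma^2 \epsilon^{3n-2})$ once $\epsilon$ and $\gamma$ are sufficiently small.

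I do not anticipate a serious obstacle. The work has effectively been front-loaded into Theorem~\ref{thm:evolution} and Corollary~\ref{cor:semi}, and the design of $\TT$ in \eref{eq:arrival} was guided by exactly this bookkeeping. The one delicate point is therefore purely a matter of tracking constants: one must confirm that $C_R$, $C_n$, $\kappa_n$, and $C_\TT$ are chosen so that the single factor $e^{-\kappa_n \gamma \epsilon \TT/4}$ dominates the product $(1+\tfrac{3}{2}C_n\gamma)(1 + C_R|\delta(T)|\gamma \epsilon^n)$ uniformly in the admissible range of $(\epsilon,\gamma)$. Once this verification is recorded, the three inequalities of the corollary follow in sequence.
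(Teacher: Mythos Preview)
Your proposal is correct and matches the paper's own treatment: the corollary is not given a separate proof but is recorded as the immediate combination of \eref{eq:repeat} with the bound $\|\zeta(T)\|\le e^{-\kappa_n\gamma\epsilon T/4}\gamma\epsilon^n$ from \tref{thm:evolution}, with the text following \eref{eq:cngamma} explaining how the exponential margin absorbs the reorthogonalization factor. The only discrepancy is a harmless bookkeeping swap---the paper earmarks the residual $e^{-\kappa_n\gamma\epsilon T/4}$ (not the $e^{-\kappa_n\gamma\epsilon T/2}$) for the $(1+C_R|\delta|\gamma\epsilon^n)$ correction---but your argument is otherwise identical in substance.
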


\begin{remark}
The norm $\nnorm{\cdot}$ was introduced to allow for an a priori bound
on $\zeta (t)$ which is needed because of our way to estimate the
evolution of the coupled system \eref{eq:tphidot2}--(\ref{eq:zetadot2}).
\end{remark}
\begin{remark}
  We assumed $\delta (0)=0$ since that is the case which interests
  us. Also, by the gauge invariance, we may assume that { $\theta_0=\theta(0)=0$.}
\end{remark}

\begin{proof}

{

  We first study $\delta $.
  \begin{lemma}\label{lem:delta}
    Assume $\nnorm{\{x\}_{\TT  }}\le 2$. Then, we
    have, for $t\le \TT  =C_{\TT  }/\epsilon $,
    \begin{equ}\label{eq:deltabound}
    |   \delta (t) + 2\gamma \epsilon ^{2n-1}t | \le   2 \gamma \epsilon^{2n-1/2} t~.
    \end{equ}
  \end{lemma}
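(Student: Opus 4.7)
The plan is to integrate the evolution equation \eref{eq:phidot2} for $\dot\delta$ and estimate each error term using the a priori control $\nnorm{\{x\}_\TT}\le 2$, which by definition gives $|\delta(\tau)|\le 2$ and $\norm{\zeta(\tau)}\le 2\gamma\epsilon^n$ for all $\tau\in[0,\TT]$. The leading term $-2\gamma\epsilon^{2n-1}$ integrates to $-2\gamma\epsilon^{2n-1}t$, which is exactly the quantity from which we measure the deviation in \eref{eq:deltabound}, so the task reduces to showing that the collected error contributions are bounded by $2\gamma\epsilon^{2n-1/2}t$.

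Writing $y(t) = \delta(t) + 2\gamma\epsilon^{2n-1}t$ and differentiating, the equation \eref{eq:phidot2} yields
\begin{equa}
\dot y = \OO(\delta)\gamma\epsilon^{2n-1} + \OO(\delta+\gamma\epsilon^n)\norm\zeta + \OO(\norm\zeta^2)~.
\end{equa}
I would first handle the term $\OO(\norm\zeta^2)$ using $\norm\zeta\le 2\gamma\epsilon^n$, which contributes at most $C\gamma^2\epsilon^{2n}t$; since $t\le\TT=C_\TT/\epsilon$ this is $\OO(\gamma^2\epsilon^{2n-1})$, dominated by the target $2\gamma\epsilon^{2n-1/2}t$ as soon as $\gamma\epsilon^{1/2}$ is small. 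The cross term $\OO(\gamma\epsilon^n)\norm\zeta$ gives the same type of contribution $\OO(\gamma^2\epsilon^{2n}t)$ and is handled identically.

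The more delicate piece is the term $\OO(\delta)\gamma\epsilon^{2n-1}$, since naively using $|\delta|\le 2$ would only produce a bound of order $\gamma\epsilon^{2n-1}t$, matching the leading behavior rather than beating it by $\epsilon^{1/2}$. I would close this by a bootstrap (equivalently, Gronwall) argument: assume by continuity that $|y(\tau)|\le \gamma\epsilon^{2n-1/2}\tau$ on a maximal subinterval $[0,t_*]$ with $\delta(0)=0$, so that on this interval $|\delta(\tau)|\le 2\gamma\epsilon^{2n-1}\tau + \gamma\epsilon^{2n-1/2}\tau$. Plugging this into the bound for $|\dot y|$ and integrating produces contributions of order $\gamma^2\epsilon^{4n-2}t^2$ and $\gamma^2\epsilon^{4n-3/2}t^2$; using $t\le C_\TT/\epsilon$ these are $\OO(\gamma^2\epsilon^{4n-4})$ and $\OO(\gamma^2\epsilon^{4n-7/2})$, both negligible compared to $\gamma\epsilon^{2n-1/2}t$ for $n\ge 2$ and small $\gamma,\epsilon$. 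Similarly the term $\OO(\delta)\norm\zeta$ contributes $\OO(\gamma^2\epsilon^{3n-1}t)$, also negligible. Therefore $|y(t_*)|$ is strictly smaller than $\gamma\epsilon^{2n-1/2}t_*$, so by continuity $t_* = \TT$ and the stated bound propagates over the whole interval $[0,\TT]$.

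The only real obstacle I anticipate is the bookkeeping of the $\OO(\delta)\gamma\epsilon^{2n-1}$ term, whose naive estimate is not good enough and which forces either the bootstrap above or a direct Gronwall lemma applied to $y$. Everything else is a straightforward application of the a priori bound together with $\TT=C_\TT/\epsilon$.
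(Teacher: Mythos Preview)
Your proposal is correct and follows essentially the same outline as the paper: define $y(t)=\delta(t)+2\gamma\epsilon^{2n-1}t$, use the a~priori bound $\norm\zeta\le 2\gamma\epsilon^n$ to control the $\zeta$-terms outright, and then close the dangerous $\OO(\delta)$ contribution by a Gronwall-type argument.

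The one methodological difference worth noting is how the $\OO(\delta)$ term is handled. The paper rewrites \eref{eq:phidot2} in the form $\dot\delta=-A+B(t)\delta+C(t)$ with $|B(t)|\le C_B(\gamma\epsilon^{2n-1}+\norm\zeta)$ and $|C(t)|\le C_C(\gamma\epsilon^n\norm\zeta+\norm\zeta^2)$, so that the equation for $u(t)\equiv y(t)$ becomes the \emph{linear} inhomogeneous ODE $\dot u=B(t)u+(C(t)-AtB(t))$. It then uses the explicit Duhamel formula $u(t)=\int_0^t (C-A\tau B)\,e^{\int_\tau^t B}\,\d\tau$ together with $B_{\max}t\ll 1$ to bound $|u(t)|\le At\epsilon^{1/2}$ directly, without any bootstrap. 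Your continuity/bootstrap argument is the elementary substitute for this: you feed the assumed bound on $y$ back into $|\delta|\le|y|+2\gamma\epsilon^{2n-1}\tau$ and iterate. Both arguments hinge on exactly the same smallness, namely that $\sup_{[0,T]}|B|\cdot T=\OO(\gamma\epsilon^{2n-2}+\gamma\epsilon^{n-1})\ll 1$. The paper's route is a bit cleaner because it avoids the continuity step; yours is more hands-on but entirely adequate. One small arithmetic slip: the $\OO(\delta)\norm\zeta$ contribution integrates to $\OO(\gamma^2\epsilon^{3n-1}t^2)$, not $\OO(\gamma^2\epsilon^{3n-1}t)$, but this is still negligible after inserting $t\le C_\TT/\epsilon$.
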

  
  \begin{remark} Note that this means that to lowest order in $\epsilon$, 
  $\delta(t) \sim 2 \gamma \epsilon^{2n-1} t$ for $0 \le t \le T$,
    which is the rate we found in \cite{EckmannWayne2018}.
  \end{remark}
  
\begin{proof}[Proof of \lref{lem:delta}]It is here that we use the a
  priori bound, and later we will see that the actual orbit of
  $\zeta(\tau )$ indeed satisfies  this bound.  
By the assumption, we have $\norm{\zeta(\tau )}\le 2/C_\zeta=2 \gamma \epsilon
^{n}$ for $\tau \in[0,\TT  ]$. Therefore, we can bound $\delta (\tau )$ as
follows:
The equation (\ref{eq:phidot2}) is of the form (with local names) and
finite constants $C_B$ and $C_C$:
\begin{equa}
  \dot \delta (t)&= -A +B(t)\delta (t)+C(t)~,\\
  A~~&=2\gamma \epsilon ^{2n-1}~,\\
  |B(t)|&\le C_B(\gamma\epsilon
  ^{2n-1}+\norm{\zeta(t)})~,\\
  |C(t)|&\le C_C(\gamma \epsilon ^n\norm{\zeta(t)} + \| \zeta(t) \|^2)~.
\end{equa}
We have  $\delta (0)=0$.
  The equation for $u(t)\equiv \delta (t)+At$ reads
  \begin{equa}\label{eq:uequ}
    \dot u = B(t)u(t) + (C(t)-At\cdot B(t))~.
  \end{equa}

    \begin{lemma}\label{lem:u}
    If $\nnorm{\{x\}_{\TT  }}\le 2$, then
      \begin{equ}
         |u(t)| \le At \epsilon ^{1/2} ~.
      \end{equ}
  \end{lemma}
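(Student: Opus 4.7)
The plan is to apply the variation-of-constants formula to the linear inhomogeneous ODE \eref{eq:uequ} with the initial condition $u(0)=0$. Writing
\[
G(s,t)=\exp\Bigl(\int_s^t B(\tau)\,d\tau\Bigr),
\]
one has
\[
u(t)=\int_0^t G(s,t)\bigl(C(s)-A\,s\,B(s)\bigr)\,ds.
\]
The a priori assumption $\nnorm{\{x\}_{\TT}}\le 2$ forces $\norm{\zeta(\tau)}\le 2\gamma\epsilon^n$ for all $\tau\in[0,\TT]$, and plugging this into the bounds on $B$ and $C$ stated just before the lemma gives $|B(\tau)|\le C_B'\gamma\epsilon^n$ and $|C(\tau)|\le C_C'\gamma^2\epsilon^{2n}$ (here $C_B',C_C'$ absorb the various $n$-dependent constants). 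Combined with $t\le\TT=C_\TT/\epsilon$, one obtains $\bigl|\int_0^t B(\tau)\,d\tau\bigr|\le C_B'C_\TT\,\gamma\epsilon^{n-1}$, which is arbitrarily small once $\gamma$ is small. Hence $|G(s,t)|\le 1+\OO(\gamma)$ uniformly in $0\le s\le t\le\TT$.

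Inserting these ingredients into the representation of $u(t)$ yields
\[
|u(t)|\;\le\;(1+\OO(\gamma))\int_0^t\!\bigl(C_C'\gamma^2\epsilon^{2n}+A\,s\,C_B'\gamma\epsilon^n\bigr)\,ds
\;\le\;K_1\gamma^2\epsilon^{2n}\,t+K_2\,A\,\gamma\epsilon^n\,t^2 ,
\]
for constants $K_1,K_2$ depending only on $n$. To conclude $|u(t)|\le A t\,\epsilon^{1/2}=2\gamma\epsilon^{2n-1/2}t$, it suffices to bound each of the two summands by $\tfrac12 A t\,\epsilon^{1/2}$. The first inequality $K_1\gamma^2\epsilon^{2n}t\le \gamma\epsilon^{2n-1/2}t$ reduces to $K_1\gamma\le\epsilon^{-1/2}$, which is automatic in the small-parameter regime. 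The second, $K_2 A\gamma\epsilon^n t^2\le \gamma\epsilon^{2n-1/2}t$, reduces after substituting $A=2\gamma\epsilon^{2n-1}$ and $t\le C_\TT/\epsilon$ to $2K_2 C_\TT\gamma\epsilon^{n-3/2}\le 1$; this is the quantitative smallness condition on $\gamma$ (relative to $\epsilon$) that the lemma implicitly invokes, analogous to the one already used to define $\TT$ in \eref{eq:arrival}.

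The main obstacle I anticipate is keeping the powers of $\epsilon$ sharp enough so that each error term actually loses a factor of at least $\epsilon^{1/2}$ compared with the principal quantity $At$; this is precisely what the $\epsilon^{1/2}$ on the right-hand side of the claimed bound encodes. The a priori control $\norm{\zeta}\le 2\gamma\epsilon^n$ is what supplies these extra powers of $\epsilon$ in $B$ and $C$, and the definition of $\TT$ as $C_\TT/\epsilon$ is what prevents the quadratic-in-$t$ term $K_2 A\gamma\epsilon^n t^2$ from overwhelming the linear target $A t\,\epsilon^{1/2}$. The rest of the argument is a routine application of Gronwall/variation of constants; the only subtlety is bookkeeping the various constants so that they collapse into the single factor $\epsilon^{1/2}$ asserted by \lref{lem:u}.
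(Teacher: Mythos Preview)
Your proof is correct and follows essentially the same route as the paper: both use the variation-of-constants representation for $u$, insert the a priori bound $\norm{\zeta(\tau)}\le 2\gamma\epsilon^n$ into the estimates for $B$ and $C$, and then verify that each of the two resulting contributions is $o(At)$. The paper evaluates the integrals $\int_0^t e^{(t-\tau)B_{\max}}\,d\tau$ and $\int_0^t \tau\, e^{(t-\tau)B_{\max}}\,d\tau$ explicitly, obtaining $At\cdot B_{\max}t$ for the second term, while you simply bound $|G(s,t)|\le 1+\OO(\gamma)$ uniformly and integrate $As$ directly; the outcome is the same up to constants since $B_{\max}t\sim \gamma\epsilon^{n-1}$.

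One remark on your closing comment: the inequality $2K_2 C_\TT\,\gamma\,\epsilon^{n-3/2}\le 1$ is not really a smallness condition on $\gamma$ \emph{relative to} $\epsilon$. Since $n\ge 2$ throughout the paper, one has $n-3/2\ge 1/2>0$, so the condition holds for any bounded $\gamma$ once $\epsilon$ is small enough---exactly the regime the paper works in. The definition of $\TT$ in \eref{eq:arrival} likewise does not couple $\gamma$ to $\epsilon$; it is purely an $\epsilon^{-1}$ timescale. So your argument actually needs no extra hypothesis beyond ``$\epsilon$ sufficiently small,'' matching the paper.
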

    This clearly proves \eref{eq:deltabound} and hence \lref{lem:delta}.
    \end{proof}

\begin{proof}[Proof of \lref{lem:u}]

  The solution of \eref{eq:uequ} is
  \begin{equa}\label{eq:du}
    u(t)=\int_0^t\d\tau \,\bigl( C(\tau )-A\tau B(\tau )\bigr)e^{\int_\tau ^t \d \tau ' B(\tau ')}~.
  \end{equa}
  Let $
  B_{\max}=\max_{\tau \in[0,t]} |B(\tau )|$, and
  $C_{\max}=\max_{\tau \in[0,t]} |C(\tau )|$.
  From the assumptions, we have, for sufficiently small $\epsilon $,
  \begin{equa}
    B_{\max}t &\le C_{\TT  }C_B \K(\gamma \epsilon ^{2n-1}+C_\zeta
    ^{-1})/\epsilon\\
    &\le C_{\TT  }C_B \K(\gamma \epsilon ^{2n-2}+\epsilon ^{n-1/2})\ll
    \K\epsilon ~,\\
    C_{\max} &\le C_C \K(\gamma \epsilon^n C_\zeta ^{-1} + 4 C_{\zeta}^{-2} )\le 4 C_C  \gamma
    \epsilon^ {2n}~,
  \end{equa}
  (where we have assumed that $\gamma < 1/2$).
  The term coming from $C(\tau )$ in \eref{eq:du} is bounded by
  \begin{equa}\Label{eq:c1}
     C_{\max} \frac{|1-e^{B_{\max}t}|}{B_{\max}}\le 2C_{\max}t~,
  \end{equa}
  since  $    B_{\max}t\ll 1$.

  This leads to a bound for $2C_{\max}t$ of the form
  \begin{equa}
  2C_{\max}t\le 8 C_C\gamma \epsilon ^{2n}t~,
  \end{equa}
  which is much smaller than $At=2\gamma \epsilon ^{2n-1}t$ (when
  $\epsilon $ is small enough).
  The term in the integral coming from $At\cdot B(t)$ can be
  bounded as:
    \begin{equa}
    \int_0^t \d\tau  \,& A\tau \cdot B_{\max} e^{(t-\tau )B_{\max} }=
    A\frac{|B_{\max} t -e^{B_{\max}t}+1|}{B_{\max}}\\&\le At\left
    (\frac{B_{\max}t}{2}+\OO((B_{\max}t)^2)\right)\le At B_{\max}t~,
  \end{equa}
 since we already showed $    B_{\max}t\ll \K \epsilon $.
 Collecting terms, we get, for
 $t\le C_{\TT  }/\epsilon $,
 \begin{equ}
   |u(t)| \le (8C_C\gamma \epsilon^{2n} +2 \epsilon A )t \le A t \epsilon^{1/2}~,
 \end{equ}
which completes the proof of \lref{lem:u}.
\end{proof}

}

We continue the proof of \tref{thm:evolution}. The evolution of $s$ is
bounded in much the same way as that of $\delta $, and this is left to
the reader. (We actually do not make use of these bounds.)
We finally analyze the evolution of  $\zeta $, \eref{eq:zetadot2},
which controls the motion of the distance from the cylinder. By the estimates
\eref{eq:phidot2} and \eref{eq:deltabound} on $\delta $
and $\dot\delta $, we see that
\begin{equ}
\OO(\delta \dot\delta)=\OO(\gamma^2
\epsilon ^{4n-2}t)+\OO(\gamma \epsilon ^{2n-1}t+\gamma \epsilon ^{n}) \norm\zeta ~.
\end{equ}
{
Therefore, the equation for $\dot \zeta $ takes the form
\begin{equa}\label{eq:more}
  \dot\zeta = \LL_{\phi,\gamma } \zeta +\OO(\delta +\epsilon
  )\cdot\OO(\delta +\gamma \epsilon ^n)\norm\zeta+\OO(\| \zeta \|^2) + \OO(\gamma ^2 \epsilon ^{4n-2}t)~.
\end{equa}
Using \eref{eq:deltabound}, this simplifies to
\begin{equa}
  \dot\zeta = \LL_{\phi,\gamma } \zeta +\OO( \gamma \epsilon
  ^{n+1}+\gamma \epsilon ^{2n-1}t )\norm\zeta+\OO(\| \zeta \|^2)+\OO(\gamma ^2\epsilon ^{4n-2}t)~.
\end{equa}
From the estimates on the semigroup generated by $\LL_{\phi,\gamma}$ from Proposition \ref{prop:semigroup}
we conclude that
\begin{eqnarray}\nonumber
  \norm{\zeta (t)}\le (1+C_n \gamma )e^{-\kappa_n\gamma \epsilon
    t/2}\norm{\zeta _0} &+& R_2  (1+C_n\gamma )\int_0^t e^{-\kappa_n \gamma \epsilon (t-s)/2} \| \zeta(s) \|^2 ds
 \\  \label{eq:zetasimple}
 &&  + (1+C_n\gamma )\int_0^t \d\tau \, e^{-\kappa_n\gamma \epsilon \tau/2
  }X~,
\end{eqnarray}
where $X=\OO(\gamma^2\epsilon ^{4n-2}(t-\tau ))$ bounds the contribution from
the last term in \eref{eq:more}. We note that the contribution from
$\OO(\gamma \epsilon ^{n+1} +\gamma \epsilon ^{2n-1}t)$ which also multiplies
$\zeta $, has been absorbed 
into half the decay rate $\kappa_n\gamma \epsilon $.

Define 
\begin{equ}
Z(t) = \sup_{0 \le \tau \le t} e^{ \kappa_n\gamma \epsilon \tau/2} \| \zeta(\tau) \|\ .
\end{equ}
Then, from \eqref{eq:zetasimple}, we see that
\begin{equa}
Z(t) &\le  (1+ C_n \gamma) \| \zeta_0 \|\\&\quad + R_2 (1+ C_n \gamma) \int_0^t e^{- \kappa_n \gamma \epsilon s} ds
(Z(t))^2 +  C_X
(  \gamma^2 \epsilon ^{4n-2}t^2 e^{ \kappa_n\gamma \epsilon t /2} )
\\
& \le (1+ C_n \gamma) \| \zeta_0 \| + \frac{2 R_2 (1+ C_n \gamma)}{\kappa_n \gamma \epsilon}
(e^{\kappa_n \gamma \epsilon t/2} -1) (Z(t))^2 \\ \nonumber
& \quad +  C_X
(  \gamma^2 \epsilon ^{4n-2}t^2 e^{ \kappa_n\gamma \epsilon t /2} )\ .
\end{equa}
Suppose that $\| \zeta_0 \| \le \gamma \epsilon^n$.  Then, by continuity, for $t$ small, we have
\begin{equ}
\frac{2 R_2 (1+ C_n \gamma)}{\kappa_n \gamma \epsilon } Z(t) \le \gamma \epsilon\ .
\end{equ}
Define $T^*$ to be the largest value such that
\begin{equ}
\sup_{0 \le t \le T^*} \frac{2 R_2 (1+ C_n \gamma)}{\kappa_n \gamma \epsilon } Z(t) \le \gamma \epsilon\ .
\end{equ}
Then, 
\begin{equ}
\bigl(1 - \frac{2 R_2 (1+ C_n \gamma)}{\kappa_n \gamma \epsilon } Z(t)\bigr) Z(t) 
\le (1+ C_n \gamma) \| \zeta_0 \| +  C_X
(  \gamma^2 \epsilon ^{4n-2}t^2 e^{ \kappa_n\gamma \epsilon t /2} )\ ,
\end{equ}
or
\begin{equ}
Z(t) \le (1-\gamma \epsilon)^{-1} \left[ (1+ C_n \gamma) \| \zeta_0 \| +  C_X
(  \gamma^2 \epsilon ^{4n-2}t^2 e^{ \kappa_n\gamma \epsilon t /2} ) \right]\ ,
\end{equ}
for $0 \le t \le T^*$.

Since $T = \frac{8 C_n}{\kappa \epsilon}$,  if $n > 2$,
and if $\epsilon $ is sufficiently small, then $T \le T^*$ and we have for $0 \le t \le T$,
\begin{equ}
Z(t) \le (1-\gamma \epsilon)^{-1}  (1+ \frac{3}{2} C_n \gamma) \gamma \epsilon^n\ .
\end{equ}

From the definition of $Z(t)$, this implies
\begin{equ}
\| \zeta(t) \| \le e^{- \kappa_n \gamma \epsilon t/2} (1-\gamma \epsilon)^{-1}  (1+ \frac{3}{2} C_n \gamma) \gamma \epsilon^n\ ,
\end{equ}
or 
\begin{equ}
\| \zeta(T) \| \le \gamma \epsilon^n\ ,
\end{equ}
using the definition of $T$.
}

\end{proof}

\section{Re-orthogonalization}\label{sec:reorthog}

As the system evolves, the solution will remain close (at least for some time) to the 
cylinder of breather solutions for the $\gamma = 0$ equations.  However, it will drift, so
that the base point on the cylinder changes with time, while the
vector $\zeta $ stays orthogonal to the tangent space to the cylinder at the initial base point.
In the first subsection we show that we can periodically choose new coordinates
in such a way that $\zeta$ remains small for a very long time, while the frequency
$\phi$ of the base point in the cylinder changes in a controlled and computable way.
The change in the base point manifests itself in the presence of the terms proportional to $\delta$ in the equation for $\zeta$.
To counteract this secular growth, we will  stop the evolution after a long, but
finite, interval  and ``reset'' the initial 
data so that the ``new'' initial data $\widehat{\zeta}$ is again orthogonal to the tangent space at the ``new''
initial point $(\widehat{\phi},\widehat{\theta})$ on the cylinder.  Our approach in this section is inspired by the work of Promislow \cite{promislow:2002} on pattern formation in reaction-diffusion equations, but is complicated by the very weak dissipative properties of the semigroup $e^{t \LL_{\phi,\gamma }}$.
In particular, we will not be able to show that the normal component, $\zeta$ of the solution is strongly contracted, but
we will prove that it remains small for a very long period, during which the solution evolves close to the cylinder of
breathers.

Key to this approach is the fact that in a sufficiently small neighborhood of the cylinder of breathers, the angle and phase of the point on the cylinder
and the normal direction at that point provide a smooth coordinate system.  More precisely, one has:
\begin{proposition}\label{prop:IFT}  Fix $0 < \Phi_0 \ll 1$.  There exists $\mu > 0$ such that for any 
$\bar{\phi} \in [1-\Phi_0,1+\Phi_0]$, $\bar{\theta }\in [0,2\pi)$, $\| \bar{\zeta} \| < \mu$, there exists
$(\widehat{\phi}, \widehat{\theta}, \widehat{\zeta})$ such that
\begin{equ}
e^{\I\bar{\theta}} p(\bar{\phi}) + \bar{\zeta} = e^{\I\widehat{\theta}} p(\widehat{\phi}) + \widehat{\zeta}\ ,
\end{equ}
and $\widehat{\zeta}$ is normal to the tangent space of the family of breathers at $(\widehat{\phi},\widehat{\theta})$.
\end{proposition}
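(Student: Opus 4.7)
The plan is to apply the implicit function theorem to a system encoding the normality condition. The tangent plane to the cylinder at a candidate base point $(\widehat\phi,\widehat\theta)$ is spanned, viewing $\complex^n \cong \real^{2n}$, by $T_\phi := e^{\I\widehat\theta}\dpphi|_{\widehat\phi}$ and $T_\theta := \I\, e^{\I\widehat\theta}p(\widehat\phi)$, so requiring $\widehat\zeta := \bar w - e^{\I\widehat\theta}p(\widehat\phi)$ (with $\bar w := e^{\I\bar\theta}p(\bar\phi) + \bar\zeta$) to be normal to this plane amounts to the two scalar equations $\langle T_\phi, \widehat\zeta\rangle_r = 0$ and $\langle T_\theta, \widehat\zeta\rangle_r = 0$, where $\langle a,b\rangle_r := \mathrm{Re}\sum_i \bar a_i b_i$ is the real inner product on $\complex^n$. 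Accordingly I would define $F \colon \real^2 \times \complex^n \to \real^2$ by
\[ F(\widehat\phi,\widehat\theta;\bar w) = \pmat{\langle T_\phi,\bar w - e^{\I\widehat\theta}p(\widehat\phi)\rangle_r \\ \langle T_\theta,\bar w - e^{\I\widehat\theta}p(\widehat\phi)\rangle_r} \]
and look for a smooth local solution $(\widehat\phi,\widehat\theta)=G(\bar w)$ of $F=0$. At the reference point $\bar w_0 := e^{\I\bar\theta}p(\bar\phi)$ (that is, $\bar\zeta=0$), the choice $(\widehat\phi,\widehat\theta)=(\bar\phi,\bar\theta)$ makes the common factor $\bar w - e^{\I\widehat\theta}p(\widehat\phi)$ vanish and hence $F=0$.

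Next I would compute the Jacobian $D_{(\widehat\phi,\widehat\theta)}F$ at this reference. In every entry, the term where the derivative hits the first slot of the inner product is multiplied by the zero vector $\bar w_0 - e^{\I\bar\theta}p(\bar\phi)$ and drops out; only the derivative in the second slot contributes. A direct computation yields
\[ D_{(\widehat\phi,\widehat\theta)}F\big|_{\mathrm{ref}} = \pmat{ -\|\dpphi|_{\bar\phi}\|^2 & \mathrm{Im}\langle\dpphi,p\rangle \\ -\mathrm{Im}\langle p,\dpphi\rangle & -\|p(\bar\phi)\|^2 }. \]
By \tref{th:fixed2}, $p(\phi)$ is real-valued, and by \lref{lem:binverse}, $\dpphi = B^{-1}_{\phi,\epsilon}\, p(\phi)$ is then also real-valued, so the two off-diagonal imaginary parts vanish. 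The Jacobian is therefore diagonal with entries $-\|\dpphi|_{\bar\phi}\|^2 = -\tfrac{1}{4} + \OO(\epsilon+\bar\phi)$ and $-\|p(\bar\phi)\|^2 = -1 + \OO(\epsilon+\bar\phi)$, so its determinant is bounded away from $0$ uniformly for small $\epsilon$ and $\bar\phi$ in the allowed interval.

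The implicit function theorem then provides, for each reference $(\bar\phi,\bar\theta)$, a neighborhood of $\bar w_0$ on which $F=0$ has a unique smooth root $(\widehat\phi,\widehat\theta) = G(\bar w)$; setting $\widehat\zeta := \bar w - e^{\I\widehat\theta}p(\widehat\phi)$ then produces the decomposition in the proposition. To turn this into a single uniform radius $\mu>0$ that works for all $(\bar\phi,\bar\theta)$, I would use a quantitative version of the IFT: the $C^2$ norms of $F$ and the lower bound on $|\det D F|$ are continuous functions on the compact parameter set $\bar\phi\in[1-\Phi_0,1+\Phi_0]$, $\bar\theta\in S^1$, by the analytic dependence of $p$ on $\phi$ from \tref{th:fixed2}, and therefore admit uniform bounds. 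The only mild obstacle, and really just bookkeeping, is to check that the standard quantitative IFT radius depends only on these uniform bounds and not on the particular base point; this is immediate once one covers $S^1$ by finitely many coordinate charts.
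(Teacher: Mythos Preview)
Your argument follows the same template as the paper's: both set up a two-component function $F$ encoding the normality constraint, observe $F=0$ at the reference $\bar\zeta=0$, verify the Jacobian in $(\widehat\phi,\widehat\theta)$ is invertible there, and invoke the implicit function theorem. Your explicit uniformity argument via compactness of the parameter set is something the paper only addresses in a brief remark.

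The one substantive difference is which ``normality'' condition is imposed. You read ``normal to the tangent space'' literally as Euclidean orthogonality in $\real^{2n}$ to the tangent vectors $T_\phi$ and $T_\theta$ (essentially $v^{(2)}$ and $v^{(1)}$), and your Jacobian comes out as $\mathrm{diag}(-\|\partial_\phi p\|^2,\,-\|p\|^2)$. The paper instead imposes the spectral-projection condition $\langle n^{(j)}_{\widehat\phi,\widehat\theta} \mid \widehat\zeta \rangle = 0$, i.e.\ $\widehat\zeta \in \mathrm{Range}(\P_{\widehat\phi})$; the biorthogonality $\langle n^{(j)} \mid v^{(k)} \rangle = \delta_{jk}$ then makes the Jacobian simply the $2\times 2$ identity. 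These are genuinely different conditions (since $n^{(j)} \neq v^{(j)}$ in general), and it is the paper's version that is actually used downstream, where the modulation equations require $\zeta \in \mathrm{Range}(\P)$. Your proof carries over verbatim to that setting once $T_\phi,\,T_\theta$ are replaced by $n^{(2)}_{\widehat\phi,\widehat\theta},\,n^{(1)}_{\widehat\phi,\widehat\theta}$ in the definition of $F$; the paper's choice buys a cleaner Jacobian and direct compatibility with the rest of the argument, while yours matches the geometric phrasing of the statement more closely.
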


\begin{remark}  The utility of this proposition is that if we choose any point near our family of breathers, we can find $(\widehat{\phi},\widehat{\theta},\widehat{\zeta})$
to use as initial conditions for our modulation equations \eqref{eq:zetadot} with $\widehat{\zeta} \in {\mathrm{Range}}(\P_{\widehat{\phi}})$.
\end{remark}
\begin{figure}
  \begin{center}
    \includegraphics[width=0.9\textwidth]{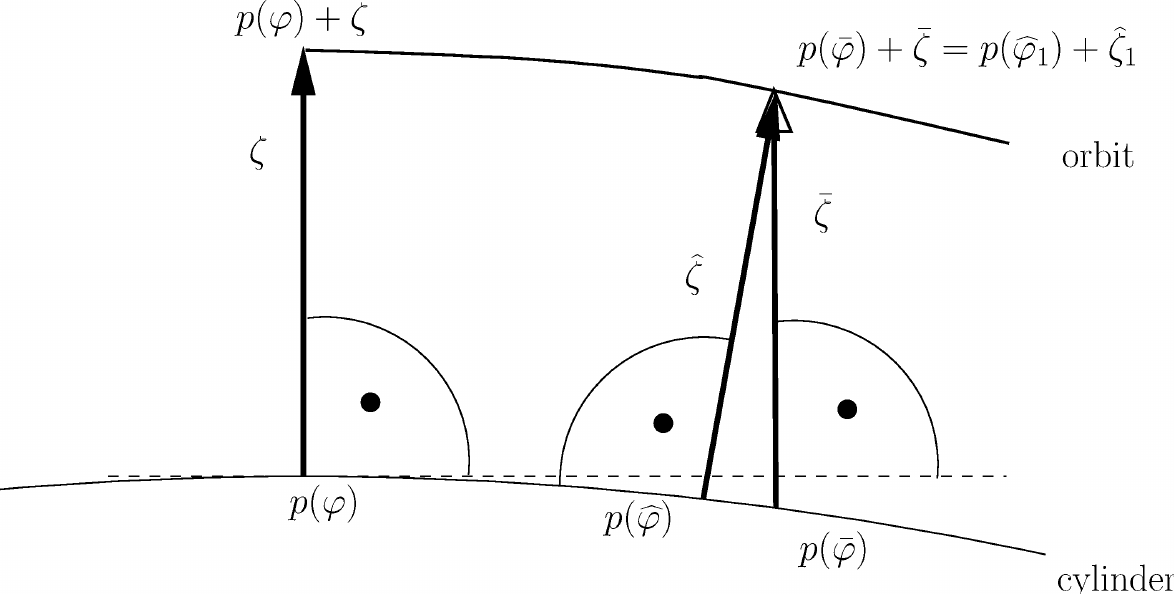}
    \end{center}
\caption{Illustration of the re-orthogonalization process. At time 0,
  the orbit starts at a distance $\|\zeta\|$  from the base point
  $p(\phi)$, which lies on the cylinder (shown as a
line). $\zeta$ is orthogonal to the tangent at the point $p(\phi)$
on the cylinder (this is the 2-dimensional subspace of 0
eigenvalues). At time $T$, the solution has moved to $p(\bar\phi)+\bar\zeta
$, with $\bar\zeta$ still orthogonal to the tangent space at
$p(\phi)$. {The re-orthogonalization consists of finding a new base
point $\widehat\phi$ in such a way that $p(\bar\phi)+\bar\zeta
=p(\widehat\phi) +\widehat\zeta $ and
$\widehat\zeta$ is orthogonal to the tangent space at $p(\widehat
\phi)$. This solution is found by the implicit function
theorem. Note that $\|\widehat\zeta\|$ might be larger than
$\|\zeta \|$, but this is compensated by the contraction induced by semigroup due
to  the dissipation.}}
\label{fig:reorth}       
\end{figure}
\proof The proof is an application of the implicit function theorem.  Begin by rescaling $\bar{\zeta} \to \mu \bar{\zeta}$,
with $\| \bar{\zeta} \| = 1$.  Then we have $\widehat{\zeta} = e^{\I \bar{\theta}} p(\bar{\phi}) + \mu \bar{\zeta} - e^{\I \widehat{\theta}} p(\widehat{\phi})$.
We wish to choose $(\widehat{\phi},\widehat{\theta})$ so that $\widehat{\zeta}$ is orthogonal to the tangent space at $(\widehat{\phi},\widehat{\theta})$.
Thus, we define 
\begin{equ}
F(\widehat{\phi},\widehat{\theta};\mu) = \left( \begin{array}{c}  \langle n^{(1)}_{ \widehat{\phi},\widehat{\theta} } | \widehat{\zeta} \rangle \\ \langle n^{(2)}_{\widehat{\phi},\widehat{\theta}} |\widehat{\zeta} \rangle \end{array}\right) =  \left( \begin{array}{c}   \langle n^{(1)}_{ \widehat{\phi},\widehat{\theta} } | (e^{\I \bar{\theta}} p(\bar{\phi}) + \mu \bar{\zeta} - e^{\I \widehat{\theta}} p(\widehat{\phi}) )  \rangle \\    \langle n^{(2)}_{ \widehat{\phi},\widehat{\theta} } | (e^{\I \bar{\theta}} p(\bar{\phi}) + \mu \bar{\zeta} - e^{\I \widehat{\theta}} p(\widehat{\phi}) )  \rangle  \end{array}\right)\ ,
\end{equ}
and the theorem follows by finding zeros of this function.

Note that $F(\bar{\phi},\bar{\theta};0) = 0$.   To compute the derivative of $F$ with respect to $(\widehat{\phi},\widehat{\theta})$ we
recall from the previous sections that the derivatives of $e^{\I \theta} p(\phi)$ with respect to $\phi$ and $\theta$ give precisely
the two vectors $v^{(j)}_{\phi,\theta}$ ($ j=1,2$) which span the zero eigenspace.  Thus, by the normalization of the vectors $n^{(j)}_{\widehat{\phi},\widehat{\theta}}$,
we see that
\begin{equ}
D_{\phi,\theta} F |_{\mu =0} = \left( \begin{array}{cc} 1 & 0 \\ 0 & 1\end{array} \right)\ .
\end{equ}
Thus, the implicit function theorem implies that there exists $\mu_0 > 0$ such that for any $ | \mu | < \mu_0$, we have
a solution $F(\widehat{\phi},\widehat{\theta};\mu) = 0$.

\begin{remark} Note that the size of the neighborhood $\mu_0$ on which we have a solution is independent of the base point
$(\phi,\theta)$ --- thus we have good coordinates on a uniform neighborhood of our original family of breathers.
\end{remark}

\begin{remark} Note that the constructive nature of the proof of the implicit function theorem also results on good estimates of the 
size of the solutions of the equation.  In particular, for small $\mu$, there exists a constant $C>0$ such that the
change in the angle and phase can be estimated as:
\begin{equ}\label{eq:reorthog}
| \bar{\phi } - \widehat{\phi} | + | \bar{\theta} - \widehat{\theta} | \le C \mu (| \langle n^{(1)}_{\bar{\phi},\bar{\theta}} | \bar{\zeta} \rangle |
+ | \langle n^{(2)}_{\bar{\phi},\bar{\theta}} | \bar{\zeta} \rangle | )\ .
\end{equ}
\end{remark}

\subsection{The intuitive picture}

Suppose that we  start from a point near our family of breathers, with coordinates $(\phi_0,\theta_0,\zeta_0)$, with
$\zeta_0 \in {\mathrm{Range}}(\P_{\phi_0})$.  We allow the system to evolve for a time $T$ to be specified below.
After this time, we will have reached a point $(\phi_1 = \phi(T), \theta_1 = \theta(T), \zeta_	1 = \zeta(T))$.
In terms of our original variables, this point will be 
\begin{equ}
w_1 = e^{\I (\phi_1 T + \theta_1)} ( p(\phi_1) + z_1)\ ,
\end{equ}
where $z_1 = (\xi_1 + \I \eta_1)$, with $(\xi_1,\eta_1)^\T = \zeta_1$.  The point is that $\zeta_1 $ is no longer orthogonal
to the tangent space to the cylinder of breathers at the point $(\phi_1,\theta_1)$.  This leads to secular growth in $\zeta$,
and eventually, we would loose control of this evolution.  To prevent this, we re-express the point $w_1$
in terms of new variables $(\widehat{\phi},\widehat{\theta},\widehat{\zeta})$, with $\widehat{\zeta}$ orthogonal to the tangent
space at $(\widehat{\phi},\widehat{\theta})$, and restart the evolution of \eqref{eq:zetadot} with
these new initial conditions.    The only complication is that we must keep careful
track of how much we change the various variables in the course of this re-orthogonalization process.
We now explain how this is done.

Without loss of generality assume that we have chosen the ``stopping time'' $T$ so that the phase $ e^{\I (\phi_1 T + \theta_1)} =1$.  (If this is not the case, we can always use the phase invariance of the equation
to rotate the solution so that this does hold.)
Then, after time $T$, the trajectory of our system will have reached the point
\begin{equ}
w_1 = p(\phi_1) + z_1\ .
\end{equ}
By Proposition \ref{prop:IFT} we know that there exists $(\widehat{\phi}, \widehat{\theta}, \widehat{\zeta})$ with
\begin{equ}
w_1 = p(\phi_1) + z_1 = e^{\I \widehat{\theta}} p(\widehat{\phi}) + \widehat{\zeta} \ ,
\end{equ}
and $\widehat{\zeta}$ is normal to the cylinder of breathers at $(\widehat{\phi}, \widehat{\theta})$.  We now restart the evolution of the modulation
equations \eqref{eq:zetadot} and follow the evolution as before.

The last thing we need to control the long-time evolution of the system is to estimate by how much we change $\phi$ and $\zeta$ in the 
course of this re-orthogonalization.  (The change in $\theta$ is inconsequential since it does not affect the magnitude of the solution, and since
the phase-invariance of the equations of motion allows to always rotate the system back to zero phase if needed.)  The change from
$\phi_1$ to $\widehat{\phi}$ is estimated with the aid of the implicit function theorem.

We know that the vectors $\langle n^{(j)}_{\phi,\theta} |$ depend smoothly on $\phi$ and hence
\begin{equa}[eq:phijump]
 | \langle n^{(1)}_{\phi_1} | \zeta_1 \rangle &\le | \langle n^{(1)}_{\phi_1}  | \zeta_1 \rangle -  \langle n^{(1)}_{\phi_0}  | \zeta_1 \rangle | +  
| \langle n^{(1)}_{\phi_0}  | \zeta_1 \rangle |  \\
& \le  | \langle n^{(1)}_{\phi_1}  | \zeta_1 \rangle -  \langle n^{(1)}_{\phi_0}  | \zeta_1 \rangle | \le C \delta(T)  \| \zeta_1 \|
\le  C \delta(T)\gamma  \epsilon^n\ .
\end{equa}
Here, the first inequality just uses the triangle inequality, the second the fact that $\zeta_1$ is orthogonal to $n^{(1)}_{\phi_0} $ by construction,
the third uses Cauchy-Schwarz, plus the smooth dependence of the normal vectors on $\phi$, and the last, the estimate on $\zeta_1$ coming
from Theorem \ref{thm:evolution}.    If we combine this estimate with \eqref{eq:reorthog}, we see that the change in $\phi$ from $\phi_1$ to
$\widehat{\phi}$ produced by the re-orthogonalization is extremely small.

It remains to estimate the corresponding change in $\zeta$ when we replace $\zeta_1$ by $\widehat{\zeta}$.   We have
\begin{equ}
p(\phi_1) + z_1 = e^{\I \widehat{\theta}} p(\widehat{\phi} ) + \widehat{z}\ ,
\end{equ}
where as usual $\widehat{z} = \widehat{\xi} + \I \widehat{\eta} $, with $\widehat{\zeta} = (\widehat{\xi},\widehat{\eta})^\T$.  Again, using the fact that $p(\phi)$ depends
smoothly on $\phi$, plus estimates on the difference in $\phi_1$ and $\widehat{\phi}$ given by \eqref{eq:phijump} and similar estimates for the
$\widehat{\theta}$, we see that 
\begin{equ}\label{eq:zetajump}
\| \zeta_1 - \widehat{\zeta} \| \le C_R \delta(T) \gamma \epsilon^n\ ,
\end{equ}
or 
\begin{equ}\label{eq:reorthog2}
\| \widehat{\zeta} \| \le (1+ C_R \delta(T) \gamma \epsilon^n) \|
\zeta_1\|\ ,
\end{equ}for some finite $R_2$.

\section{Iterating}\label{sec:iter}
{

The estimates of the previous section show that if we take initial conditions for 
\eqref{eq:main} close to the cylinder of breathers for the undamped
equations, and if we express that initial point as
\begin{equ}
w_0 = p(\phi_0) + z_0\ ,
\end{equ}
with $\zeta_0 = (\Re(z_0),\Im(z_0))^\T \in {\mathrm{Range}}(\P_0)$ and $\| \zeta_0 \| \le \gamma \epsilon^n$,
then $\phi$, $\theta$, and $\zeta$ will evolve via \eqref{eq:tphidot2}--\eqref{eq:zetadot2} and after
a time $T = \frac{4 C_n}{\kappa_n \epsilon}$ we will have
\begin{eqnarray}
\phi(T) - \phi_0 &=& -2  \gamma \epsilon^{2n-1} T (1+\OO(\epsilon^{1/2}) )~,  \\
\| \zeta(T) \| & \le & (1-\gamma \epsilon)^{-1} (1+ \frac{3}{2} C_n \gamma) \epsilon^n\ .
\end{eqnarray}
As usual we ignore the evolution of $\theta$ since any $\theta $ dependence of the solution can be
removed using the phase invariance of the problem.  

As discussed in Section \ref{sec:reorthog}, $\zeta(T)$ will not lie in ${\mathrm{Range}}(\P_{\phi(T)})$.
Thus, we now re-orthogonalize.
To see what is involved, consider again \fref{fig:reorth}.

This means we reexpress 
\begin{eqnarray}
w(T) = e^{\I \phi(T) T } p(\phi(T)) + z(T) = e^{\I(\widehat{\phi} T + \widehat{\theta})} p(\widehat{\phi}) + \widehat{z}\ ,
\end{eqnarray}
where as usual, $\widehat{z} = (\widehat{\xi} + \I \widehat{\eta})$, with $(\widehat{\xi},\widehat{\eta}) =\widehat{\zeta} $ and  $\widehat{\zeta} \in {\mathrm{Range}}(\P_{\widehat{\phi}})$.

We now recall the estimates for the change in $\phi$ and $\zeta$ produced by the re-orthogonalization.  First, from \eqref{eq:phijump}, plus the estimate on $\delta(T)$ from Lemma \ref{lem:delta}, we have
\begin{equ}
| \phi(T) - \widehat{\phi}| \le C \delta(T) \epsilon^n~,
\end{equ}
and hence by the triangle inequality we see that
\begin{equ}
| (\phi_0 - \widehat{\phi}) + 2 \gamma \epsilon^{2n-1} T | \le 4 \gamma \epsilon^{2n - 1/2} T\ ,
\end{equ}
\ie  to leading order $\phi_0 - \widehat{\phi} \approx \phi_0 - \phi(T)$.

Likewise, from \eqref{eq:zetajump}, we have
\begin{equa}
\| \widehat{\zeta} \| &\le  \| \zeta(T) \| + \| \zeta(T) - \widehat{\zeta} \|\\& \le e^{-\kappa_n \gamma \epsilon T/2} (1-\gamma \epsilon)^{-1} ( 1 + \frac{3}{2} C_n \gamma) \epsilon^n 
 + 4 \gamma \epsilon^{2n-1} T \\
& \le  \gamma \epsilon^n\ ,
\end{equa}
for $\epsilon $ sufficiently small. If we look at the second line
above, we see how the contraction, and the ``waiting'' for a time $T$
come in: Namely, the first factor contracts,because of the estimates
on the semigroup (an the dissipation), while the next two factors come
from the reprojection and the prefactor from the bound on the semigroup.

Thus, we can begin to evolve our equation of motion starting from the point $w(T)$, but
now expressed as
\begin{equ}
w(T) = e^{\I(T\widehat{\phi}  + \widehat{\theta})} p(\widehat{\phi}) + \widehat{\zeta}\ ,
\end{equ}
where $\widehat\zeta \in {\mathrm{Range}}(\P_{\widehat{\phi}})$, and
$\| \widehat{\zeta} \| \le \gamma \epsilon^n$.  Thus, the new representation for $w(T)$ has the same properties as the representation of $w_0$ that we started with, and
hence we can continue to evolve our trajectory which will remain close
to the cylinder of breathers. 

}
%
\begin{acknowledgements}
We thank No\'e Cuneo and Pierre Collet for useful discussions.  The research of CEW was supported
in part by the US NSF under grant number DMS-1813384.
\end{acknowledgements}

    \bibliographystyle{spphys}
\bibliography{refsgene}
\end{document}